\tiny\color{gray},
\newcommand{\PGD}{P^{\mathrm{GD}}}
\newcolumntype{L}[1]{>{\raggedright\arraybackslash}p{#1}}
\newcolumntype{C}[1]{>{\centering\arraybackslash}m{#1}}
\newcolumntype{R}[1]{>{\raggedleft\arraybackslash}p{#1}}
\newcommand{\DTV}[2]{d_{\mathrm{TV}}\left({#1},{#2}\right)}
\newcommand{\e}{\mathrm{e}}
\renewcommand{\epsilon}{\varepsilon}
\newcommand{\0}{-1}
\newcommand{\1}{+1}
\newcommand{\Bin}{\mathrm{Bin}}
\newcommand{\tmix}{T_{\textsf{mix}}}
\newcommand{\Cma}{\Psi^{\mathrm{cor}}}
\newcommand{\Ima}{\Psi^{\mathrm{inf}}}
\newtheorem{theorem}{Theorem}[section]
\newtheorem{claim}[theorem]{Claim}
\newtheorem*{claim*}{Claim}
\newtheorem{fact}[theorem]{Fact}
\newtheorem{lemma}[theorem]{Lemma}
\newtheorem{proposition}[theorem]{Proposition}
\theoremstyle{definition}
\newtheorem{definition}[theorem]{Definition}
\newtheorem*{remark*}{Remark}
\newcommand{\norm}[1]{\left\Vert#1\right\Vert}
\newcommand{\tuple}[1]{\left(#1\right)} 
\newcommand{\inner}[2]{\left\langle #1,#2\right\rangle}
\newcommand{\tp}{\tuple}
\newcommand{\abs}[1]{\left\vert#1\right\vert}
\newcommand{\ctp}[1]{\left\lceil#1\right\rceil}
\def\*#1{\boldsymbol{#1}} 
\def\+#1{\mathcal{#1}} 
\def\-#1{\mathrm{#1}} 
\def\^#1{\mathscr{#1}} 
\newcommand{\KL}[2]{D_{\mathrm{KL}}\tp{{#1}\,\Vert\, {#2}}}
\renewcommand{\todo}[1]{\typeout{TODO: \the\inputlineno: #1}\textbf{{\color{red}[[[ #1 ]]]}}}
\DeclareMathOperator*{\oPr}{\mathbf{Pr}}
\renewcommand{\Pr}[2][]{ \ifthenelse{\isempty{#1}}
  {\oPr\left[#2\right]}
  {\oPr_{#1}\left[#2\right]} } 
\DeclareMathOperator*{\oE}{\mathbf{E}}
\newcommand{\E}[2][]{ \ifthenelse{\isempty{#1}}
  {\oE\left[#2\right]}
  {\oE_{#1}\left[#2\right]} }
\def\oVar{\mathbf{Var}}
\newcommand{\Var}[2][]{ \ifthenelse{\isempty{#1}}
  {\oVar\left[#2\right]}
  {\oVar_{#1}\left[#2\right]} }
\def\oEnt{\mathbf{Ent}}
\newcommand{\Ent}[2][]{ \ifthenelse{\isempty{#1}}
  {\oEnt\left[#2\right]}
  {\oEnt_{#1}\left[#2\right]} }
\newcommand{\EE}[2][]{ \ifthenelse{\isempty{#1}}
  {\mathbf{E}\left[#2\right]}
  {\mathbf{E}_{#1}\left[#2\right]} }
\def\mEnt{\mathbf{Cov}}
\newcommand{\MEnt}[2][]{ \ifthenelse{\isempty{#1}}
  {\mEnt\left(#2, \log #2\right)}
  {\mEnt_{#1}\left(#2, \log #2\right)} }
\newcommand{\Rd}{\mathsf{Trans}}
\newcommand{\relaxT}[2][]{
  \ifthenelse{\isempty{#2}}
  {t_{\mathrm{rel}}^{\mathrm{#1}}}
  {t_{\mathrm{rel}}^{\mathrm{#1}}(#2)}
}
\newcommandx{\HyperGeo}[3][1 = \ell, 2=V, 3=k]{\Pi_{#2, #3, #1}}
\title{Optimal Mixing Time for the Ising Model in the Uniqueness Regime}
\author{Xiaoyu Chen}
\author{Weiming Feng}
\author{Yitong Yin}
\author{Xinyuan Zhang}
\address[Xiaoyu Chen, Yitong Yin, Xinyuan Zhang]{State Key Laboratory for Novel Software Technology, Nanjing University, 163 Xianlin Avenue, Nanjing, Jiangsu Province, China. \textnormal{E-mails: \url{chenxiaoyu233@smail.nju.edu.cn}, \url{yinyt@nju.edu.cn}, \url{zhangxy@smail.nju.edu.cn}}}
\address[Weiming Feng]{School of Informatics, University of Edinburgh, Informatics Forum, Edinburgh, EH8 9AB, United Kingdom. \textnormal{E-mail: \url{wfeng@ed.ac.uk}}}
\begin{document}

\begin{abstract}
  We prove an optimal $O(n \log n)$ mixing time of the Glauber dynamics for the Ising models with edge activity $\beta \in \left(\frac{\Delta-2}{\Delta}, \frac{\Delta}{\Delta-2}\right)$. This mixing time bound holds even if the maximum degree $\Delta$ is unbounded. 

We refine the boosting technique developed in~\cite{chen2021rapid}, and prove a new boosting theorem by utilizing the entropic independence defined in \cite{anari2021entropic}. The theorem relates the modified log-Sobolev (MLS) constant of the Glauber dynamics for a near-critical Ising model to that for an  Ising model in a sub-critical regime.

  
  

\end{abstract}

\maketitle

\section{Introduction}
The Ising model~\cite{ising1925beitrag} has been extensively studied in Statistic Physics, Probability and Computer Science.
Let $G=(V,E)$ be an undirected graph. 
Let $\beta \in \mathds{R}_{> 0}$ be the \emph{edge activity} and $\*\lambda = (\lambda_v)_{v \in V} \in \mathds{R}_{>0}^V$  the \emph{local fields}.
A  \emph{configuration} $\sigma \in \{\0, \1\}^V$ assigns each vertex $v\in V$ one of the two \emph{spins} in $\{\0,\1\}$.
%
The \emph{Gibbs distribution} $\mu$ over $\{\0, \1\}^V$ is defined by: 
\begin{align*}
\forall \sigma \in \{\0, \1\}^V,\quad \mu(\sigma) \triangleq \frac{1}{Z} \beta^{m(\sigma)} \prod_{v \in V: \sigma_v = \1} \lambda_v,
\end{align*}
where $m(\sigma) \triangleq \abs{\{\{u, v\} \in E \mid \sigma_u = \sigma_v\}}$ is the number of monochromatic edges in $\sigma$,
and the \emph{partition function} $Z=Z_G(\beta)$ is defined by
\begin{align*}
Z \triangleq \sum_{\sigma \in \{\0,\1\}^V}\beta^{m(\sigma)} \prod_{v \in V: \sigma_v = \1} \lambda_v. 	
\end{align*}
The Ising model is said to be \emph{ferromagnetic} if $\beta > 1$, and \emph{anti-ferromagnetic} if $\beta < 1$.

Sampling from Gibbs distributions is a fundamental computational task.
The \emph{Markov chain Monte Carlo} (MCMC) method is the most extensively studied technique for this task.
A canonical Markov chain is the \emph{Glauber dynamics} (a.k.a.~\emph{Gibbs sampler}, \emph{heat-bath}).
For any distribution $\mu$ over $\{\0,\1\}^V$, the chain starts from a \emph{feasible} configuration $X_0 \in\Omega(\mu) \subseteq\{\0,\1\}^V$.
In the $t$-th step, $X_{t-1}$ is updated to $X_t$ as:
\begin{itemize}
\item pick a vertex $v \in V$ uniformly at random and let $X_t(u) = X_{t-1}(u)$ for all $u\not= v$.
\item sample $X_t(v)$ from the marginal distribution $\mu_v^{X_{t-1}(V \setminus \{v\} )}$. 
\end{itemize}
Here, $\mu_v^{X_{t-1}(V \setminus \{v\} )}$ denotes the marginal distribution on $v$ projected from $\mu$ conditional on the values of all vertices except $v$ being fixed as $X_{t-1}(V \setminus \{v\} )$.
It is well known the chain is reversible with respect to $\mu$. 
Its convergence rate is captured by the \emph{mixing time}:
\begin{align*}
  \forall 0 < \epsilon < 1, \quad T_{\mathrm{mix}}(\epsilon) &\triangleq \max_{X_0 \in \Omega(\mu)} \min \{t \mid \DTV{X_t}{\mu} \leq \epsilon\},
\end{align*}
where $\DTV{X_t}{\mu}$ is the \emph{total variational distance} between the distribution of $X_t$ and $\mu$.

The rapid mixing of the Glauber dynamics is intrinsically connected to the \emph{spatial mixing} property of the Gibbs distribution $\mu$.
For the Ising model, its spatial mixing is captured by the \emph{uniqueness condition}. 
Let $\Delta=\Delta_G$ denote the maximum degree of graph $G$.
There exist two critical thresholds  $\beta_c(\Delta)$ and $\bar{\beta}_c(\Delta)$ for the edge activity $\beta$, called the \emph{uniqueness thresholds}, defined by
\[
\beta_c(\Delta) = \frac{\Delta - 2}{\Delta} \quad\text{ and }\quad \bar{\beta}_c(\Delta) = \frac{\Delta}{\Delta - 2}.
\]
%
If $\beta_c(\Delta) < \beta < \bar{\beta}_c(\Delta)$, then for arbitrary local fields $\*\lambda \in \mathds{R}_{>0}^V$, the influence of a boundary condition on the marginal distribution at a vertex $v$ in $\mu$ decays exponentially in the distance between the vertex and the boundary;
and if $\beta <\beta_c(\Delta)$ or $\beta >  \bar{\beta}_c(\Delta)$,  then there exist such local fields that the boundary-to-vertex correlation persists as the distance between them grows to $\infty$~\cite{LLY13,sinclair2014approximation,guo2018uniqueness}.
%
%

The mixing behavior of Glauber dynamics for Ising model undergoes sharp transitions at the uniqueness thresholds.
On one hand, if $\beta$ lies in the non-uniqueness regime, the Glauber dynamics is torpid (slow) mixing.
For anti-ferromagnetic Ising model, in the non-uniqueness regime $\beta <\beta_c(\Delta)$, unless $\mathbf{NP}=\mathbf{RP}$, sampling from Ising models is not polynomial-time tractable~\cite{sly2012computational,galanis2016inapproximability}.
For ferromagnetic Ising model, in the non-uniqueness regime  $\beta > \bar{\beta}_c(\Delta)$,  the mixing time of Glauber dynamics is exponential in the size of the graph~\cite{GerschenfeldM07}, 
although the sampling problem can be solved otherwise by global Markov chains \cite{jerrum1993polynomial,guo2018random} or polynomial interpolation~\cite{LSS17}. 

On the other hand, if $\beta$ lies in the uniqueness regime, the Glauber dynamics mixes in polynomial-time.
%
In a seminal work of Mossel and Sly \cite{MS13}, 
by leveraging the monotonicity of coupling, 
an $\exp\tp{\Delta^{O(1/\delta)}} n\log n$ mixing time was proved for the ferromagnetic Ising models with $n$ vertices and $\beta \in (1, \frac{\Delta - \delta}{\Delta - 2 + \delta}]$, 
where $\delta \in (0, 1)$ controls the gap to the uniqueness threshold.
%
%
Recently, 
based on a powerful ``local-to-global'' theorem of Alev and Lau~\cite{anari2020spectral,alev2020improved} for high-dimensional expander walks,
an important concept called \emph{spectral independence} was introduced by Anari, Liu and Oveis Gharan in their seminal work~\cite{anari2020spectral},
which leads to a series of important progress in studies of mixing times~\cite{chen2020rapid,feng2021rapid,chen2021rapidcolor,liu2021coupling,blanca2021mixing,jain2021spectral,chen2021rapid,chen2021spectral,abdolazimi2021matrix, anari2021entropic}.
%
For the Ising models in the uniqueness regime with $\beta \in [\frac{\Delta - 2 + \delta}{\Delta - \delta}, \frac{\Delta - \delta}{\Delta - 2 + \delta}]$, 
a polynomially-bounded mixing time $n^{O(1/\delta)}$ was first proved by Chen, Liu and Vigoda \cite{chen2020rapid} using the spectral independence;
and this mixing time bound was improved to $\Delta^{O(1/\delta)} n \log n$ in their subsequent work~\cite{chen2020optimal} by a uniform block factorization of entropy.
On general graphs with unbounded maximum degrees, both these mixing bounds become $n^{O(1/\delta)}$ in the worst case. 
%
Very recently, in a work~\cite{chen2021rapid} by the authors of the current paper, by establishing a boosting theorem for spectral gaps, an $\mathrm{e}^{O(1/\delta)}n^2$ mixing time bound was proved for the Ising models on general $n$-vertex graphs (with bounded or unbounded maximum degree $\Delta$) in the uniqueness regime with $\beta \in [\frac{\Delta - 2 + \delta}{\Delta - \delta}, \frac{\Delta - \delta}{\Delta - 2 + \delta}]$.
%
%
%

An important open problem is then to close the gap to the lower bound of the mixing time~\cite{hayes2007general,DY11} and prove an $O(n \log n)$ optimal mixing time for the Ising models in the uniqueness regime.
In this paper, we resolve the problem by proving the following theorem.

\begin{theorem}\label{thm:main}
For all $\delta \in(0,1)$, there exists $C_\delta=\exp(O(1/\delta))$ such that for every Ising model on $n$-vertex graph $G=(V,E)$ with maximum degree $\Delta=\Delta_G\ge 3$  and with edge activity  $\beta\in\left[\frac{\Delta-2+\delta}{\Delta-\delta},\frac{\Delta-\delta}{\Delta-2+\delta}\right]$ and local fields $\*\lambda \in \mathds{R}_{>0}^V$, 
the mixing time of the Glauber dynamics is bounded as
\begin{align*}
\tmix(\epsilon) \leq C_\delta \frac{\lambda_{\max}}{\lambda_{\min}} n \tp{ \log \frac{n}{\epsilon} + \log \log \frac{2\lambda_{\max}}{\lambda_{\min}}},
\end{align*}
where $\lambda_{\max} =  \max_{v \in V} \lambda_v$ and $\lambda_{\min}=\min_{v \in V} \lambda_v$.
\end{theorem}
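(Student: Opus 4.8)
The plan is to reduce the theorem to a lower bound on the \emph{modified log-Sobolev (MLS) constant} $\rho$ of the Glauber dynamics, namely $\rho\gtrsim C_\delta^{-1}\,(\lambda_{\min}/\lambda_{\max})\,n^{-1}$, and then to deduce the stated mixing time from the standard estimate $\tmix(\epsilon)\le \rho^{-1}\big(\log\log(1/\mu_{\min})+\log(1/2\epsilon^2)\big)$. The only subtlety in this last step is that $\log\log(1/\mu_{\min})$ can be as large as $\Theta(n)$ when the local fields are extreme (e.g.\ all equal to a doubly-exponential value), whereas the theorem claims the sharper term $\log\log\frac{2\lambda_{\max}}{\lambda_{\min}}$; this is handled by prepending a burn-in of $O_\delta\big(\tfrac{\lambda_{\max}}{\lambda_{\min}}\,n\log\tfrac n\epsilon\big)$ steps and bounding the relative entropy of the chain afterwards by a quantity polynomial in $\log\frac{2\lambda_{\max}}{\lambda_{\min}}$, via a coupling/comparison argument exploiting that in the uniqueness window every single-site conditional is bounded away from $0$ and $1$ by $\Theta_\delta(\lambda_{\min}/\lambda_{\max})$. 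I treat this packaging as routine and concentrate below on the MLS constant.

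To obtain a lower bound on $\rho$ that is \emph{uniform in the maximum degree $\Delta$}, I would go through \emph{entropic independence}~\cite{anari2021entropic}. First homogenize: regard the Gibbs distribution $\mu$ on $\{\0,\1\}^V$ as a distribution supported on $n$-subsets of the $2n$-element ground set $\{v^{+},v^{-}:v\in V\}$, a set selecting exactly one of $v^{+},v^{-}$ for each $v$ with $v^{+}\in S \Leftrightarrow \sigma_v=\1$. In the uniqueness window one has the marginal lower bound $\mu_v(c)\ge b$ with $b\gtrsim_\delta \lambda_{\min}/\lambda_{\max}$, and since conditioning (pinning) an Ising model yields an Ising model on an induced subgraph with the same $\beta$ and local fields rescaled by factors $\beta^{\pm\deg(\cdot)}=\Theta_\delta(1)$, this bound degrades only by a $\Theta_\delta(1)$ factor under arbitrary pinnings. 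It then suffices, by the local-to-global / approximate-tensorization machinery of~\cite{anari2021entropic} together with the standard equivalences between the MLS constant, approximate tensorization of entropy, and uniform block factorization of entropy, to establish $\eta$-entropic independence with a bounded parameter $\eta=O_\delta(1)$ for $\mu$ \emph{and all of its pinnings}; this yields $\rho\gtrsim (\eta n)^{-1}$ times a factor \emph{linear} in $b$, which is exactly the source of the linear $\lambda_{\max}/\lambda_{\min}$ dependence (as opposed to the $\Delta^{O(1/\delta)}$-type loss one incurs when only spectral independence is available).

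The heart of the argument is establishing $O_\delta(1)$-entropic independence for $\beta$ near the uniqueness threshold, where a direct tree-recursion / correlation-decay estimate degrades badly as $\delta\to 0$. Here I would prove a new \emph{boosting theorem}, refining the one of~\cite{chen2021rapid}: the MLS constant (equivalently, the entropic-independence parameter) of the Glauber dynamics for an Ising model sitting at gap $\delta$ is controlled, up to a multiplicative loss $\kappa(\delta)$ depending on $\delta$ alone, by that of an auxiliary Ising model on a modified graph with modified local fields sitting at a strictly larger gap $\delta'=g(\delta)>\delta$, the comparison being carried out by relating the relevant entropy functionals of the two models (in the spirit of a field-dynamics or self-reduction, but now at the level of entropy rather than of the Dirichlet form). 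One then fixes a base case with $\delta$ at least a universal constant --- i.e.\ $\beta$ bounded away from the thresholds, a high-temperature regime in which entropic independence is immediate and degree-uniform (e.g.\ from a Dobrushin-type condition or a direct estimate) --- and iterates the boosting along a geometric schedule of gaps $\delta<2\delta<4\delta<\dots<\Theta(1)$. Since the per-step loss at gap $\delta_i$ is $\kappa(\delta_i)=\exp(O(1/\delta_i))$, the total loss telescopes to $\exp\big(O(\sum_i 1/\delta_i)\big)=\exp(O(1/\delta))=C_\delta$.

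The main obstacle is the boosting theorem itself, and it is twofold. First, one must choose the auxiliary construction so that the new Ising model is genuinely more sub-critical while the entropy-functional comparison between old and new models is tight enough that the per-level loss $\kappa(\delta)$ is a controlled function of the gap alone, with no hidden dependence on $n$ or $\Delta$; obtaining a clean comparison at the level of \emph{entropy} (rather than of the spectral gap, as in~\cite{chen2021rapid}) is precisely the refinement that upgrades the $n^2$ bound of~\cite{chen2021rapid} to $n\log n$. Second, every estimate along the way --- the marginal bounds, the entropic-independence parameter at each level, and the comparison constants --- must be uniform in $\Delta$, since this uniformity is exactly what distinguishes the present bound from the $\Delta^{O(1/\delta)}$-type bounds obtainable from spectral independence with marginal boundedness. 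Everything else --- the MLS-to-mixing reduction, the homogenization, the marginal computations, and the conversions among the various entropy-factorization statements --- is routine.
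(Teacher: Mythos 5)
Your overall reduction (lower-bound the modified log-Sobolev constant by $\Omega_\delta(\lambda_{\min}/(\lambda_{\max}n))$, then apply the standard MLS-to-mixing bound) matches the paper, and your worry about $\log\log(1/\mu_{\min})$ is handled there without any burn-in: one first observes that if \emph{every} $\lambda_v$ satisfies $\min(\lambda_v,1/\lambda_v)\le 1/500$ then Dobrushin/path coupling applies directly, and otherwise $\mu_{\min}\ge(\lambda_{\min}/(14000\lambda_{\max}))^n$, so $\log\log(1/\mu_{\min})\le\log n+\log\log(14000\lambda_{\max}/\lambda_{\min})$. The genuine gap is in the core of your argument, in two places. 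First, the boosting theorem you invoke is exactly the statement you do not prove, and the form you propose for it is not realizable as described: there is no known self-reduction that replaces an Ising model at uniqueness gap $\delta$ by one on a ``modified graph'' at a strictly larger gap $\delta'>\delta$ in $\beta$ while controlling entropy functionals. The operation that does admit an entropy-level comparison is magnetization of the \emph{local fields}, $\mu\mapsto\mu^{(\theta)}$: same graph, same $\beta$, every field multiplied by a constant $\theta$. A \emph{single} application with a universal $\theta$ (here $\theta=1/500$, after flipping the signs of the fields so that all fields become $\le\theta$) already lands in a Dobrushin-type regime where $\rho^{\mathrm{GD}}_{\min}$ is known to be $\Omega(C/n)$ with $C=\Theta(\lambda_{\min}/\lambda_{\max})$; no geometric iteration over gaps is needed or available, and the $\exp(O(1/\delta))$ loss comes from one factor $(\theta/\e)^{\eta+3}$ with $\eta=2/\delta$, not from telescoping.

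Second, your claim that $O_\delta(1)$-entropic independence of $\mu$ and all its pinnings ``yields $\rho\gtrsim(\eta n)^{-1}$ times a factor linear in $b$'' is unsupported, and it misplaces the difficulty. Entropic independence under all tilts (equivalently, fractional log-concavity of the homogenization) is \emph{not} the bottleneck in the uniqueness regime: it follows from the $\infty$-norm spectral-independence bound with all fields, which is a known potential-function computation. What fractional log-concavity buys, via \cite{anari2021entropic}, is entropy factorization into blocks of size at least $\eta+2$; with blocks of size $O(\eta)$ the factorization constant is $(n/\eta)^{O(\eta)}$ (polynomial in $n$), and with blocks of linear size the constant is $O_\delta(1)$ but one must then factorize entropy \emph{inside} linear-size blocks, which is the original problem again. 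Bridging from block factorization to the single-site Glauber MLS constant without a $\Delta^{O(1/\delta)}$ or $n^{O(1/\delta)}$ loss is precisely the hard step, and your proposal supplies no mechanism for it. The paper's mechanism is the $k$-transformation plus a $k\to\infty$ limit (\Cref{lem:FBF}): uniform block factorization of $\mu_k=\Rd(\mu,k)$ with blocks of size $\lceil\theta kn\rceil$ converges to a ``magnetized'' factorization in which the conditional distributions on the random blocks are exactly $\pi^{\mathds{1}_R}$ for $\pi=\mu^{(\theta)}$ and $R\sim\Bin(V,1-\theta)$; these conditional distributions are sub-critical, and a Dirichlet-form comparison (\Cref{lem:compare}) then transfers their MLS bound back to the Glauber dynamics on $\mu$. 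Without this (or an equivalent) construction, your outline does not close.
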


The theorem gives an optimal $O(n\log n)$ mixing time for both ferromagnetic and anti-ferromagnetic Ising models on general graphs (with bounded or unbounded maximum degrees) in the interior of the uniqueness regime, for constant local fields or the local fields that are in the same order of magnitude. 
%
%

%
We further remark that by combining techniques in this paper with a Markov chain called the field dynamics defined in \cite{chen2021rapid}, we can obtain an $O_{\delta}(n \log^2 n)$-time sampling algorithm for the Ising models with $\beta\in\left[\frac{\Delta-2+\delta}{\Delta-\delta},\frac{\Delta-\delta}{\Delta-2+\delta}\right]$, where the constant in $O_{\delta}(\cdot)$ depends only on $\delta$  but not on $\*\lambda$. 
This sampling algorithm is postponed to the full version of the paper.

\subsection{Results for general distributions}
We prove a boosting theorem for the \emph{modified log-Sobolev (MLS)} constant for Glauber dynamics on general distributions. 
Bounds on MLS constants  can often give optimal mixing time for Glauber dynamics.
Let $V$ be a finite ground set.
Let $\mu$ be a distribution over $\{\0,\1\}^V$.
We use $\Omega(\mu)$ to denote the support of distribution $\mu$.
Let $P = \PGD_\mu :\Omega(\mu) \times \Omega(\mu) \to \mathds{R}_{\geq 0}$ denote the transition matrix of the Glauber dynamics on $\mu$.
For any function $f: \Omega(\mu)\to \mathds{R}_{\geq 0}$, its \emph{Dirichlet form} is defined by:
\begin{align*}
\+E_{P}(f, \log f) \triangleq \inner{f}{(I-P)\log f}_{\mu}, 
\end{align*}
where the inner product $\inner{f}{g}_{\mu}\triangleq \sum_{\sigma \in \Omega(\mu)}f(\sigma)g(\sigma)\mu(\sigma)$.
Define  the entropy:
\begin{align*}
\Ent[\mu]{f} \triangleq \EE[\mu]{f \log f} - \EE[\mu]{f} \log \EE[\mu]{f},	
\end{align*}
where $\EE[\mu]{f} \triangleq \sum_{\sigma \in \Omega(\mu)}\mu(\sigma)f(\sigma)$.
We use the convention $0\log 0 = 0$ in above definitions.

Our goal is to bound the following  \emph{modified log-Sobolev constant}~\cite{bobkov2006modified} for the Glauber dynamics on $\mu$:
\begin{align} \label{eq:mlsc} 
  \rho^{\mathrm{GD}}(\mu) &\triangleq \inf \left\{ \left.\frac{\+E_{P}(f, \log f)}{\Ent[\mu]{f}} \;\right\vert\; f:\Omega(\mu) \to \mathds{R}_{\geq 0},\; \Ent[\mu]{f} \not= 0 \right\},
\end{align}
so that the mixing time of Glauber dynamics can be bounded by,
\begin{align*}
  t_{\mathrm{mix}}(\epsilon) \leq \frac{1}{\rho^{\mathrm{GD}}(\mu)} \tp{\log \log \frac{1}{\mu_{\min}} + \log \frac{1}{2\epsilon^2}},
\end{align*}
where $\mu_{\min} \triangleq \min_{\sigma \in \Omega(\mu)} \mu(\sigma)$ is the minimum probability in $\mu$.

%
For any distribution $\mu$ over $\{\0,\1\}^V$, any subset $S \subseteq V$, let $\mu_S$ denote the marginal distribution on $S$ induced by $\mu$.
We simply use $\mu_v$ to denote $\mu_{\{v\}}$ if $S = \{v\}$.
For any subset $\Lambda \subseteq V$, any $\sigma \in \Omega(\mu_\Lambda)$, we use $\mu^{\Lambda \gets \sigma}$ (or simply $\mu^\sigma$ if $\Lambda$ is clear from the context) to denote the distribution over $\{\0,\1\}^V$ induced by $\mu$ conditional on the configuration $\sigma$ on $\Lambda$.
\begin{definition}[\text{signed influence matrix~\cite{anari2020spectral}}]
\label{definition-inf-matrix}
Let $\mu$ be a distribution over $\{\0,\1\}^V$.
The \emph{signed influence matrix} $\Ima_{\mu}: V\times V \to \mathds{R}$ is defined by
\begin{align*}
\forall u,v \in V, \quad \Ima_{\mu}(u,v) = \begin{cases}
    \mu_v^{u \gets \1}(\1) - \mu_v^{u \gets \0}(\1) &\text{if } u \neq v \text{ and } \Omega(\mu_u)=\{\0,\1\};\\
 	0 &\text{otherwise}.
 \end{cases}
\end{align*}	
\end{definition}

In \cite{anari2020spectral}, a notion of \emph{spectral independence} was defined using the signed influence matrix.  
Specifically, a distribution $\mu$ over $\{-1,+1\}^V$ is spectrally independent if its influence matrix $\Ima_{\mu}$ has bounded spectral radius.
In this paper, we use the following sufficient condition for the spectral independence (SI).

\begin{definition}[spectral independence in ${\infty}$-norm]
Let $\eta>0$.
A distribution $\mu$ over $\{\0,\1\}^V$ is said to be \emph{$\eta$-spectrally independent in ${\infty}$-norm} if 
\begin{align*}
\norm{\Ima_{\mu}}_\infty = \max_{u \in V}\sum_{v \in V}\abs{\Ima_{\mu}(u,v)} \leq \eta.
\end{align*}
\end{definition}

Next, we introduce a notation for the distributions with local fields. 
\begin{definition}[magnetizing a joint distribution with local fields]
\label{definition-local-fields}
Let $\mu$ be a distribution over $\{\0,\1\}^V$.
Let $\*\lambda = (\lambda_v)_{v \in V} \in \mathds{R}_{>0}$ be positive local fields.
Let $\mu^{(\*\lambda)}$ denote the distribution obtained from imposing the local fields $\*\lambda$ onto $\mu$. Formally, for any configuration $\sigma \in \{\0,\1\}^V$,
\begin{align*}
\mu^{(\*\lambda)}(\sigma) \propto \mu(\sigma) \prod_{v \in V: \sigma_v = \1}\lambda_v.	
\end{align*}
We simply denote $\mu^{(\*\lambda)}$ by $\mu^{(\lambda)}$ if $\lambda_v = \lambda$ for all $v \in V$.
\end{definition}

\begin{definition}[spectral independence in $\infty$-norm with all fields]
A distribution $\mu$ over $\{\0,\1\}^V$ is said to be $\eta$-spectrally independent in ${\infty}$-norm \emph{with all fields} if 
${\mu^{(\*\lambda)}}$ is $\eta$-spectrally independent in $\infty$-norm for all $\*\lambda = (\lambda_v)_{v \in V} \in \mathds{R}_{>0}^V$.
\end{definition}

We define the MLS constant under the worst pinning. 
Let $\pi$ be a distribution over $\{\0,\1\}^V$.
For any $\Lambda \subseteq V$, any $\sigma \in \Omega(\pi_\Lambda)$, let $\rho^{\mathrm{GD}}(\pi^\sigma)$ denote the MLS constant for the Glauber dynamics on $\pi^\sigma$. Let
\begin{align}
\label{eq-def-MLS-min}
\rho^{\mathrm{GD}}_{\min}(\pi) \triangleq \min_{\Lambda \subset V}\min_{\sigma \in \Omega(\pi_\Lambda)} \rho^{\mathrm{GD}}(\pi^\sigma).	
\end{align}

We now state our  boosting theorem for the MLS constant for general distributions over $\{\0,\1\}^V$.
\begin{theorem}[boosting theorem for modified log-Sobolev]
\label{theorem-general}
Let $\mu$ be a distribution over $\{\0,\1\}^V$ and  $\eta>0$.
If ${\mu}$ is $\eta$-spectrally independent in $\infty$-norm with all fields,
then the following holds for the modified log-Sobolev constants for Glauber dynamics: 
\begin{align*}
\forall\theta\in(0,1),\quad \rho^{\mathrm{GD}}(\mu) \geq \tp{\frac{\theta}{\e}}^{\eta + 3}\rho^{\mathrm{GD}}_{\min}(\mu^{(\theta)}).
\end{align*}
\end{theorem}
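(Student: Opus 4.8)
The plan is to establish the boosting bound by comparing the Glauber dynamics on $\mu$ with a two-step "field dynamics" construction that interpolates between $\mu$ and the sub-critical distribution $\mu^{(\theta)}$. The natural framework is the one developed in~\cite{chen2021rapid}: first use the local fields to "magnetize" the distribution down to $\mu^{(\theta)}$, whose MLS constant is controlled by $\rho^{\mathrm{GD}}_{\min}(\mu^{(\theta)})$ by hypothesis, and then argue that the cost of undoing the magnetization (returning from $\mu^{(\theta)}$ to $\mu$) is only a factor $(\theta/\e)^{\eta+3}$. Concretely, I would introduce the field dynamics $P_{\mathrm{field}}$ with parameter $\theta$: from a configuration $X$, first let $R = \{v : X_v = \1\}$ be the set of $(+1)$-vertices, independently downsample $R$ to a subset $R'$ by keeping each vertex with probability $\theta$, pin the vertices outside $R'$ to $\0$, then resample the whole configuration from $\mu$ conditioned on that pinning. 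The stationary distribution of this chain is $\mu$, and a standard decomposition (the "factorization of entropy through the field dynamics") writes the Dirichlet form of $P_{\mathrm{field}}$ as an average, over the random pinning $\sigma$, of Dirichlet forms of Glauber dynamics on the conditional distributions $\mu^\sigma$, which are of the form $(\mu^{(\theta)})^{\tau}$ for appropriate pinnings $\tau$ — these are exactly the conditional distributions appearing in $\rho^{\mathrm{GD}}_{\min}(\mu^{(\theta)})$.

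The key steps, in order, are: (1) set up the field dynamics and verify reversibility w.r.t.\ $\mu$; (2) prove an \emph{entropy factorization} lemma — entropy of $f$ under $\mu$ is bounded by the expected entropy under the downsampling step plus the expected entropy under the conditional resampling step — and relate the resampling part to $\rho^{\mathrm{GD}}_{\min}(\mu^{(\theta)})$; (3) bound the \emph{first step} (the downsampling / re-magnetization move) using entropic independence: here the $\eta$-spectral-independence-with-all-fields hypothesis enters, via the Anari–Jain–Koehler–Pham–Vuong-style result~\cite{anari2021entropic} that $\infty$-norm spectral independence (with all fields) implies the distribution is $\frac{1}{\theta}$-entropically-independent-like contraction under $\theta$-downsampling, yielding the $(\theta/\e)^{\Theta(\eta)}$ loss; (4) compare the Dirichlet form of the field dynamics with that of the Glauber dynamics on $\mu$ itself — this is a comparison-of-chains argument showing $\+E_{\PGD_\mu} \geq c \cdot \+E_{P_{\mathrm{field}}}$ for a mild constant $c$, absorbing the remaining "$+3$" slack in the exponent; (5) chain the inequalities together to get $\rho^{\mathrm{GD}}(\mu) \geq (\theta/\e)^{\eta+3}\rho^{\mathrm{GD}}_{\min}(\mu^{(\theta)})$.

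I expect step (3) — extracting the sharp $(\theta/\e)^{\eta}$ entropy-contraction factor for the magnetization step from $\infty$-norm spectral independence — to be the main obstacle, since it requires the entropic-independence machinery of~\cite{anari2021entropic} applied uniformly over all field vectors, and getting the base of the exponent to be exactly $\theta/\e$ (rather than a worse constant) demands a careful analysis of the single-site entropy decay under the $\theta$-biased coin, likely via a local-to-global / approximate-tensorization argument iterated over vertices. A secondary difficulty is the bookkeeping in step (2): ensuring the conditional distributions produced by the downsampling are genuinely of the form $(\mu^{(\theta)})^\tau$ so that the hypothesis on $\mu^{(\theta)}$'s worst-pinning MLS constant can be invoked — this should follow from the identity $\bigl(\mu^{(\theta)}\bigr)^{\sigma}$ matching $\mu$ conditioned on pinning the complement of $R'$ to $\0$, up to the local field $\theta$ on the free vertices, but the precise matching of normalizations needs care. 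The remaining steps are comparatively routine given the framework of~\cite{chen2021rapid} and the mixing-time consequence of MLS constants already quoted in the excerpt.
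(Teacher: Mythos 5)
Your overall strategy is the paper's: magnetize down to $\pi=\mu^{(\theta)}$, factorize $\Ent[\mu]{f}$ into an average of entropies of conditional distributions of the form $\pi^{\mathds{1}_R}$ (the ``$\theta$-magnetized block factorization of entropy'' of \Cref{definition-key-BF}), bound each such entropy using $\rho^{\mathrm{GD}}_{\min}(\pi)$, and compare the resulting average of Dirichlet forms back to $\+E_{\PGD_\mu}(f,\log f)$ (\Cref{lem:compare}). Note, however, that this last comparison is lossless in the paper --- the entire $(\e/\theta)^{\eta+3}$ factor comes from the factorization constant, not from your step (4); the ``$+3$'' (rather than ``$+2$'') arises because the $k$-transformation degrades the spectral independence bound from $\eta$ to $\eta+1$. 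Two more substantive issues remain.

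First, your description of the field-dynamics step is wrong in a way that breaks step (2): the downsampled set $R'$ must be pinned to $\1$ and the remaining vertices resampled from $\mu^{(\theta)}$ conditioned on $\mathds{1}_{R'}$, whereas you pin $V\setminus R'$ to $\0$ and resample from $\mu$. With your version the conditional distributions are not of the form $\bigl(\mu^{(\theta)}\bigr)^{\tau}$, so the hypothesis on $\rho^{\mathrm{GD}}_{\min}(\mu^{(\theta)})$ cannot be invoked; the correct target is exactly inequality~\eqref{eq-lem-FBF}.

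Second, and more importantly, your step (3) --- which you rightly identify as the main obstacle --- is left without the idea that makes it work. The entropy-contraction theorem of~\cite{anari2021entropic} applies to homogeneous distributions and yields \emph{uniform} block factorization (averaging over uniformly random size-$\ell$ coordinate subsets), not the magnetized factorization over $\Bin(V,1-\theta)$-random sets pinned to $\1$. The paper bridges this gap with the $k$-transformation $\mu_k=\Rd(\mu,k)$: it shows $\mu_k$ remains $(\eta+1)$-spectrally independent in $\infty$-norm with all fields (\Cref{lem:muk-inf}), applies the entropic-independence machinery to obtain $\lceil\theta kn\rceil$-uniform block factorization of $\mu_k$ with constant $(\e/\theta)^{\eta+2+1}$ via homogenization and fractional log-concavity (\Cref{corollary-muk-UBF}), and then lets $k\to\infty$, using concentration of the multivariate hypergeometric distribution to show that the uniform factorization of $\mu_k$ converges to the magnetized factorization of $\mu$ (\Cref{lem:FBF}). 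Without this device, or an equivalent one, there is no route from your hypothesis to the $(\theta/\e)^{\eta+3}$ constant, so the proposal has a genuine gap at its central step.
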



\Cref{theorem-general} is a boosting theorem for the MLS constant.
When applied on a distribution $\mu$ in a near-critical regime,
by carefully choosing a parameter $\theta$ so that $\mu^{(\theta)}$ enters an easier sub-critical regime,
the MLS constant bound for $\mu$ is reduced to easier or known MLS constant bounds,
by losing a  $\theta^{O(\eta)}$ factor.
%
%

%
A similar boosting theorem for the Poincar\'{e} constant (spectral gap) was established in~\cite{chen2021rapid}, which requires $\mu^{(\*\lambda)}$ to be spectrally independent for all $\*\lambda \in (0,1]^V$.
\Cref{theorem-general} asks for a stronger condition,  but holds for the MLS constant that can imply optimal mixing time bounds.


\subsection{Open problems}
The key ingredient in this paper is the boosting theorem for MLS constant, which connects the MLS constants of Glauber dynamics on distributions $\mu$ and $\mu^{(\theta)}$. 
However, the distribution $\mu$ should be spectrally independent in $\infty$-norm with \emph{arbitrary} local fields, which restricts its applicability. 
An open problem is to prove a boosting theorem with weaker assumption, especially the spectral independence with one-sided local fields,
so that it can be used to prove optimal mixing times for other 2-spin systems, e.g.~the hard-core model. 

Another important question is to extend our framework to distributions beyond the boolean domain so that it can be used to prove optimal mixing times for general spin systems, e.g.~proper $q$-colorings of general graphs with bounded or unbounded degrees.

Finally, for the Ising model, our bound for mixing time of Glauber dynamics depends on the ratio $\frac{\lambda_{\max}}{\lambda_{\min}}$. 
This ratio comes from the analysis of the MLS constant in the easier regime $\mu^{(\theta)}$ (see \Cref{section-Ising} for details).
We conjecture that such dependency on the local fields in the mixing time can be removed.
This requires a deeper understanding of the MLS constant to the Ising models with heterogenous (and non-constant) fields, 
which has not been very well studied even in an easy regime.
%

\section{Proof outline}
We outline the proof of the boosting theorem (\Cref{theorem-general}).
The result for Ising model (\Cref{thm:main}) is an application of the boosting theorem.
The proof of \Cref{thm:main} is given in \Cref{section-Ising}.

To prove \Cref{theorem-general}, the key step is to establish the ``{magnetized block factorization of entropy}''. Let $\theta \in (0,1)$ and $\pi = \mu^{(\theta)}$. %
Formally, for every $\sigma \in \{\0,\1\}^V$,
\begin{align}
\label{eq-redef-pi}
\pi(\sigma) = \frac{\mu(\sigma)\theta^{\|\sigma\|_{+}} }{Z_\pi},
\quad\text{ where }Z_\pi \triangleq \sum_{\sigma \in \{\0,\1\}^V}\mu(\sigma) \theta^{\|\sigma\|_{+}} \text{ and } \|\sigma\|_{+} = |\{v \in V \mid \sigma_v = \1\}|.
\end{align}

We use $\Bin(V,1-\theta)$ to denote the distribution of random subset $R \subseteq V$ generated by including each $v \in V$ into $R$ independently with probability $1 - \theta$.
Specifically, for every $\Lambda \subseteq V$,
\begin{align*}
  \Pr[R \sim \Bin(V,1-\theta)]{R = \Lambda} = (1 - \theta)^{\abs{\Lambda}}\theta^{\abs{V} - \abs{\Lambda}}.
\end{align*}

\begin{definition}[magnetized block factorization of entropy]
\label{definition-key-BF}
Let $\theta \in (0,1)$ and $C > 0$.
A distribution $\mu$ over $\{\0,\1\}^V$
is said to satisfy \emph{$\theta$-magnetized block factorization of entropy with parameter $C$} 
if the following holds for $\pi = \mu^{(\theta)}$ and for all $f: \Omega(\mu) \to \mathds{R}_{\geq 0}$:
\begin{align}
  \label{eq-lem-FBF}
    \Ent[\mu]{f} \leq C \cdot\frac{Z_\pi}{\theta^{|V|}} \E[R \sim \Bin(V,1-\theta)]{\pi_{R}\tp{\mathds{1}_{R}} \cdot \Ent[\pi^{\mathds{1}_{R}}]{f}},
  \end{align}
assuming $\pi_R\tp{\mathds{1}_R} \cdot \Ent[\pi^{\mathds{1}_R}]{f} = 0$ if $\pi_R\tp{\mathds{1}_R} = 0$,
where $\mathds{1}_R$ denotes the all-(+1) configuration on $R\subseteq V$.
\end{definition}

In the left-hand-side (LHS) of~\eqref{eq-lem-FBF}, the entropy is evaluated with respect to the original distribution $\mu$; and in the right-hand-side (RHS), the entropy is evaluated with respect to the  conditional distribution $\pi^{\mathds{1}_R}$, where $\pi = \mu^{(\theta)}$ is obtained from $\mu$ by changing the local fields by a factor $\theta$. 
This definition plays a key role to relate $\mu$ to $\mu^{(\theta)}$.
\Cref{theorem-general} can be proved by combining the following two lemmas.
\begin{lemma}\label{lemma-imply-mls}
Let $\theta \in (0,1)$ and $C > 0$. For any distribution $\mu$ over $\{\0,\1\}^V$ satisfying $\theta$-magnetized block factorization of entropy with parameter $C$, the Glauber dynamics on $\mu$ has the modified log-Sobolev constant
\begin{align*}
\rho^{\mathrm{GD}}(\mu) \geq \frac{\rho^{\mathrm{GD}}_{\min}(\mu^{(\theta)})}{C}.	
\end{align*}
\end{lemma}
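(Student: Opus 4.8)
\textbf{Proof proposal for \Cref{lemma-imply-mls}.}

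The plan is to derive the modified log-Sobolev inequality for the Glauber dynamics on $\mu$ from the magnetized block factorization \eqref{eq-lem-FBF} by comparing Dirichlet forms. Fix $f:\Omega(\mu)\to\mathds{R}_{\geq 0}$ with $\Ent[\mu]{f}\neq 0$. By \eqref{eq-lem-FBF} it suffices to upper bound the quantity $\frac{Z_\pi}{\theta^{|V|}}\E[R\sim\Bin(V,1-\theta)]{\pi_R(\mathds{1}_R)\cdot\Ent[\pi^{\mathds{1}_R}]{f}}$ by $\frac{1}{\rho^{\mathrm{GD}}_{\min}(\mu^{(\theta)})}\cdot\+E_P(f,\log f)$, where $P=\PGD_\mu$. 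The first step is to recognize that, for each fixed $R$, the conditional distribution $\pi^{\mathds{1}_R}$ is a pinning of $\pi=\mu^{(\theta)}$ (pinning the coordinates in $R$ to $+1$), so by the definition \eqref{eq-def-MLS-min} of $\rho^{\mathrm{GD}}_{\min}$ we have the entropy bound
\begin{align*}
\Ent[\pi^{\mathds{1}_R}]{f}\leq\frac{1}{\rho^{\mathrm{GD}}_{\min}(\mu^{(\theta)})}\cdot\+E_{\PGD_{\pi^{\mathds{1}_R}}}(f,\log f),
\end{align*}
where the Dirichlet form on the right is that of the Glauber dynamics on $\pi^{\mathds{1}_R}$, applied to the restriction of $f$ to $\Omega(\pi^{\mathds{1}_R})$.

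The second and main step is to show that averaging these conditional Dirichlet forms over $R\sim\Bin(V,1-\theta)$, with the weights $\frac{Z_\pi}{\theta^{|V|}}\pi_R(\mathds{1}_R)$, reproduces exactly (or is dominated by) the Dirichlet form $\+E_P(f,\log f)$ of the Glauber dynamics on $\mu$. The key identity to establish is that, for every $v\in V$ and every pair of configurations differing only at $v$, the total ``rate'' at which this transition is exercised — summed over all $R$ with $v\notin R$, weighted appropriately — equals the corresponding rate $\mu(\sigma)P(\sigma,\sigma')$ in the chain on $\mu$. Concretely: a Glauber update at $v$ in $\pi^{\mathds{1}_R}$ (for $R\not\ni v$) resamples $X(v)$ from $\pi^{\mathds{1}_R}_v=\mu^{\mathds{1}_R,(\theta)}_v$; one computes that $\pi_R(\mathds{1}_R)=\frac{\mu_R^{(\theta)}(\mathds{1}_R)Z_\pi^{?}}{\cdots}$ and the normalizing factor $Z_\pi/\theta^{|V|}$ is precisely what converts the probabilities under $\pi$ back into probabilities under $\mu$, because $\pi(\sigma)=\mu(\sigma)\theta^{\|\sigma\|_+}/Z_\pi$. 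The cleanest route is to expand both sides as sums over edges $(\sigma,\sigma^{v})$ of the transition graph of $\PGD_\mu$, write $\+E_{\PGD_{\pi^{\mathds{1}_R}}}(f,\log f)=\sum_{v\notin R}\E[\tau\sim\pi^{\mathds{1}_R}]{\Ent[\pi^{\mathds{1}_R,\tau(V\setminus v)}_v]{f}}$ (the standard decomposition of the Glauber Dirichlet form into per-site conditional entropies), and then verify that
\begin{align*}
\frac{Z_\pi}{\theta^{|V|}}\E[R\sim\Bin(V,1-\theta)]{\pi_R(\mathds{1}_R)\sum_{v\notin R}\E[\tau\sim\pi^{\mathds{1}_R}]{\Ent[\pi^{\mathds{1}_R,\tau(V\setminus v)}_v]{f}}}=\sum_{v\in V}\E[\sigma\sim\mu]{\Ent[\mu_v^{\sigma(V\setminus v)}]{f}}=\+E_P(f,\log f).
\end{align*}
The left side telescopes: $R$ together with $\tau$ on $V\setminus(R\cup\{v\})$ and the pinned value $+1$ on $R$ jointly specify a boundary condition on $V\setminus\{v\}$, and the combinatorial weights $(1-\theta)^{|R|}\theta^{|V|-|R|}\cdot\pi_R(\mathds{1}_R)\cdot\pi^{\mathds{1}_R}(\tau)$, after multiplying by $Z_\pi/\theta^{|V|}$, collapse to $\mu$ of that boundary condition — this is where the factor $\theta^{\|\cdot\|_+}$ in the definition of $\pi$ cancels against the $\Bin(V,1-\theta)$ weights and the $Z_\pi/\theta^{|V|}$ prefactor. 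I expect this bookkeeping — correctly matching the product $(1-\theta)^{|R|}\theta^{|V|-|R|}$ against the marginal $\pi_R(\mathds{1}_R)$ and the factor $\theta^{\|\cdot\|_+}/Z_\pi$ — to be the main obstacle, since one must be careful that resampling $v$ under $\pi^{\mathds{1}_R}$ genuinely agrees with resampling $v$ under $\mu$ conditioned on the same boundary (it does, because imposing a uniform field $\theta$ and then pinning some vertices to $+1$ does not change the conditional law at an unpinned vertex beyond a reweighting that is already accounted for).

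Once the identity (or inequality) $\frac{Z_\pi}{\theta^{|V|}}\E[R]{\pi_R(\mathds{1}_R)\+E_{\PGD_{\pi^{\mathds{1}_R}}}(f,\log f)}\leq\+E_P(f,\log f)$ is in hand, the lemma follows by chaining: $\Ent[\mu]{f}\leq C\cdot\frac{Z_\pi}{\theta^{|V|}}\E[R]{\pi_R(\mathds{1}_R)\Ent[\pi^{\mathds{1}_R}]{f}}\leq\frac{C}{\rho^{\mathrm{GD}}_{\min}(\mu^{(\theta)})}\cdot\frac{Z_\pi}{\theta^{|V|}}\E[R]{\pi_R(\mathds{1}_R)\+E_{\PGD_{\pi^{\mathds{1}_R}}}(f,\log f)}\leq\frac{C}{\rho^{\mathrm{GD}}_{\min}(\mu^{(\theta)})}\+E_P(f,\log f)$, and dividing by $\Ent[\mu]{f}$ and taking the infimum over $f$ yields $\rho^{\mathrm{GD}}(\mu)\geq\rho^{\mathrm{GD}}_{\min}(\mu^{(\theta)})/C$ as claimed. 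A subtlety worth flagging is the degenerate case $\pi_R(\mathds{1}_R)=0$: by the convention in \Cref{definition-key-BF} such $R$ contribute zero to the right side of \eqref{eq-lem-FBF}, and they also contribute zero on the Dirichlet-form side (an empty set of feasible configurations), so the argument is unaffected; one should also note $f$ on the right side is implicitly the restriction to $\Omega(\pi^{\mathds{1}_R})\subseteq\Omega(\mu)$, which is fine since $\Omega(\pi^{\mathds{1}_R})\subseteq\Omega(\mu)$ whenever $\pi_R(\mathds{1}_R)>0$.
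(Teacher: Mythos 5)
Your high-level architecture matches the paper's: apply the modified log-Sobolev inequality for the Glauber dynamics on each $\pi^{\mathds{1}_R}$ to bound $\Ent[\pi^{\mathds{1}_R}]{f}$ by the corresponding Dirichlet form over $\rho^{\mathrm{GD}}_{\min}(\pi)$, then compare the weighted average of these conditional Dirichlet forms with $\+E_P(f,\log f)$. But there is a genuine gap at the crux of the comparison step. You assert that ``resampling $v$ under $\pi^{\mathds{1}_R}$ genuinely agrees with resampling $v$ under $\mu$ conditioned on the same boundary,'' and on that basis claim the two sides match exactly. This is false: for a boundary condition $\sigma$ on $V\setminus\{v\}$, the conditional marginal at $v$ under $\pi=\mu^{(\theta)}$ is
\[
\pi_v^{\sigma}(\1)=\frac{\theta\,\mu(\sigma_+^v)}{\mu(\sigma_-^v)+\theta\,\mu(\sigma_+^v)}\;\neq\;\frac{\mu(\sigma_+^v)}{\mu(\sigma_-^v)+\mu(\sigma_+^v)}=\mu_v^{\sigma}(\1),
\]
because the field $\theta$ also acts on the vertex being resampled. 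Likewise, carrying out your bookkeeping (summing $(1-\theta)^{|R|}\theta^{|V|-|R|}\pi_R(\mathds{1}_R)\pi^{\mathds{1}_R}(\sigma)$ over $R\subseteq\sigma^{-1}(\1)$ and multiplying by $Z_\pi/\theta^{|V|}$) yields the weight $\mu(\sigma_-^v)+\theta\mu(\sigma_+^v)$ on the boundary condition $\sigma$, not $\mu_{V\setminus\{v\}}(\sigma)=\mu(\sigma_-^v)+\mu(\sigma_+^v)$. So neither the weights nor the update kernels match, and no exact telescoping identity holds. What rescues the argument is a one-sided comparison that you never supply: the product of the two mismatches satisfies
\[
\bigl(\mu(\sigma_-^v)+\theta\mu(\sigma_+^v)\bigr)\,\pi_v^{\sigma}(\0)\,\pi_v^{\sigma}(\1)
=\frac{\theta\,\mu(\sigma_-^v)\,\mu(\sigma_+^v)}{\mu(\sigma_-^v)+\theta\mu(\sigma_+^v)}
\;\le\;\frac{\mu(\sigma_-^v)\,\mu(\sigma_+^v)}{\mu(\sigma_-^v)+\mu(\sigma_+^v)},
\]
which holds precisely because $\theta<1$, and it multiplies the nonnegative factor $(f(\sigma_+^v)-f(\sigma_-^v))(\log f(\sigma_+^v)-\log f(\sigma_-^v))$. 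This is exactly the paper's monotonicity relaxation \eqref{eq:margin-monotone-pi-mu} inside the chain \eqref{eq:tensorization-change-base}; it is the missing idea, and without it the central step of your proof does not close.

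A secondary imprecision: the Glauber Dirichlet form decomposes into per-site \emph{covariances} $\MEnt[\mu^\chi]{f}$ (see \eqref{eq:mAT}), not per-site conditional entropies as in your displayed decomposition. Since $\Ent[\nu]{f}\le\MEnt[\nu]{f}$ in general, writing the conditional Dirichlet forms as sums of entropies is not an identity, and mixing the two currencies would leave the two sides of your comparison incommensurable. With covariances used throughout and the monotonicity inequality above inserted, your outline becomes the paper's proof.
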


\begin{lemma} \label{lem:key} 
Let $\mu$ be a distribution over $\{\0,\1\}^V$ and $\eta > 0$.
If  ${\mu}$ is $\eta$-spectrally independent in $\infty$-norm with all fields,
then $\mu$ satisfies $\theta$-magnetized block factorization of entropy with parameter 
  \begin{align*}
  	C = \tp{\frac{\e}{\theta}}^{\eta + 3}.
  \end{align*}
\end{lemma}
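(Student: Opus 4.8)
\textbf{Proof plan for \Cref{lem:key}.}
The plan is to establish the magnetized block factorization of entropy in \eqref{eq-lem-FBF} by a ``local-to-global'' argument through the coupling of $\mu$ with $\pi = \mu^{(\theta)}$ via the binomial ``reveal'' operation. The key observation is that sampling $\sigma \sim \mu$ can be realized as a two-stage process: first sample $R \sim \Bin(V, 1-\theta)$, then condition $\pi$ on $\mathds{1}_R$ on $R$ and sample the remaining coordinates; the normalizing factor $\frac{Z_\pi}{\theta^{|V|}} \E[R]{\pi_R(\mathds{1}_R)\,(\cdot)}$ exactly re-weights this decomposition so that the RHS is an expectation under $\mu$ of conditional entropies. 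So the first step is to identify the correct probability space: let $R$ be the random set of coordinates that are ``pinned to $\1$'' and verify the identity relating $\mu$, $\pi$, $R$, and $\mathds{1}_R$ — this is essentially the statement that the field-magnetization $\mu \mapsto \mu^{(\theta)}$ is ``undone'' by a random pinning to $\1$ with the right probabilities, a standard manipulation with partition functions.

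Next I would invoke the entropy factorization machinery that converts spectral independence into block factorization. The distribution $\pi = \mu^{(\theta)}$ is $\eta$-spectrally independent in $\infty$-norm (since $\mu$ is spectrally independent with \emph{all} fields, in particular with the field $\theta$), and moreover it remains so under all further pinnings and all additional fields. By the known results relating $\infty$-norm spectral independence to \emph{approximate tensorization} / block factorization of entropy — in the spirit of Alev--Lau, Anari--Liu--Oveis Gharan, and the Chen--Liu--Vigoda line — one gets a factorization of $\Ent[\pi^\tau]{\cdot}$ over single-site or block updates with a constant that is exponential in $\eta$. The step I would carry out here is to express $\Ent[\mu]{f}$ in terms of $\Ent[\pi]{f}$ plus a ``field-flipping'' correction, then apply the block factorization for $\pi$ relative to the random block $V \setminus R$ (equivalently, the complement $R$ is pinned). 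The subtlety is that the blocks here are \emph{not} drawn from a product measure in the usual way: the block is the $\Bin(V,1-\theta)$-random set, and the factorization must be of the specific magnetized form where the complement is pinned to all-$\1$, not to a $\pi$-typical configuration. I expect this is handled by comparing the magnetized block dynamics to the standard Glauber/heat-bath block dynamics and bounding the Dirichlet-form distortion by a factor depending on $\theta$ (this is the source of the extra $\theta^{-O(1)}$ beyond $\theta^{-\eta}$ — concretely the $+3$ in the exponent).

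The main obstacle will be controlling the constant \emph{precisely} as $(\e/\theta)^{\eta+3}$ rather than just $\theta^{-O(\eta)}$. This requires (i) tracking the exponential-in-$\eta$ constant from the spectral-independence-to-entropy-factorization step without slack — likely by using the sharpest available form, e.g.~the entropic independence framework of \cite{anari2021entropic} which gives clean $\e^\eta$-type bounds for $\infty$-norm (rather than $2$-norm) spectral independence, combined with the local-to-global theorem for down-up walks; and (ii) isolating the $\theta$-dependent losses: one factor of $\theta^{-1}$ from the marginal lower bound $\pi_v^\tau(\1) \gtrsim \theta \cdot \mu_v^\tau(\1)$ needed to compare the pinned-to-$\1$ block update with a genuine heat-bath block update, plus the remaining factors from converting between the $\Bin(V,1-\theta)$-block factorization and the one-step factorization, and from the reweighting identity of the first step. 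I would set up the proof so these losses are multiplicative and additive-in-the-exponent, then verify $\eta + 3$ is the correct total. A secondary technical point is ensuring the inequality is robust to $\pi_R(\mathds{1}_R) = 0$, handled by the stated convention, and that the ``with all fields'' hypothesis is genuinely used (it is — the pinned conditional distributions $\pi^\tau$ inherit spectral independence precisely because $\mu$ was assumed spectrally independent under arbitrary fields, and pinning a coordinate to $\1$ is a limit of sending that coordinate's field to $\infty$).
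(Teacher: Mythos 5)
Your opening observation---that the right-hand side of \eqref{eq-lem-FBF} is exactly a reweighted two-stage decomposition of $\mu$ (sample $\tau\sim\mu$, then sub-sample $R$ from $\tau^{-1}(\1)$ with rate $1-\theta$, so that the conditional law is $\pi^{\mathds{1}_R}$)---is correct and matches the identity the paper establishes in \Cref{lem:LBF}. You also correctly identify the entropic-independence machinery of \cite{anari2021entropic} as the source of the $(\e/\theta)^{O(\eta)}$ constant. However, your plan is missing the one idea that actually makes the argument work: the \emph{$k$-transformation} $\mu_k=\Rd(\mu,k)$ of \Cref{def:k-trans}, in which each vertex is split into $k$ copies carrying the single $\1$. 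The paper never ``compares the magnetized block dynamics to the standard block dynamics and bounds the Dirichlet-form distortion,'' as you propose. Instead it (i) shows $\mu_k$ inherits $(\eta+1)$-spectral independence in $\infty$-norm with all fields (\Cref{lem:muk-inf}), (ii) applies the \emph{standard} $\lceil\theta kn\rceil$-uniform block factorization to $\mu_k$ with constant $(\e/\theta)^{(\eta+1)+2}$ via fractional log-concavity (\Cref{corollary-muk-UBF}), and (iii) lets $k\to\infty$: a uniformly random size-$\lceil\theta kn\rceil$ block in $V\times[k]$ leaves each vertex's unique $\1$-copy pinned with probability $\to 1-\theta$ and endows the unpinned vertices with an effective field $|S\cap C_v|/k\to\theta$, which by hypergeometric concentration converges exactly to the magnetized form \eqref{eq-lem-FBF} (\Cref{lem:FBF}). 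The magnetized factorization is thus an exact limit of ordinary uniform block factorizations, not a perturbation of one.

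This matters for two reasons. First, the comparison you propose is not obviously executable: the object on the right of \eqref{eq-lem-FBF} pins $R$ to the atypical all-$(\1)$ configuration with non-$\mu$ weights, and a Dirichlet-form comparison between the corresponding block dynamics and a heat-bath block dynamics would at best transfer Poincar\'{e}-type information, not the entropy inequality needed here; I do not see how to get \eqref{eq-lem-FBF} from it without essentially reinventing the limiting construction. Second, your accounting of the exponent is wrong: the $+3$ is $(\eta+1)+2$, where the $+1$ is the loss in spectral independence under the $k$-transformation and the $+2$ comes from bounding $\binom{nk}{\hat\eta+1}/\binom{\ell}{\hat\eta+1}\le(\e nk/\ell)^{\eta+2}$ in \Cref{corollary-muk-UBF}; there is no $\theta^{-1}$ loss from a marginal bound $\pi_v^\tau(\1)\gtrsim\theta\mu_v^\tau(\1)$ in this lemma (that comparison appears only in the separate proof of \Cref{lemma-imply-mls}). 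As written, the proposal has a genuine gap at its central step.
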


The proof of \Cref{lemma-imply-mls} is outlined in \Cref{section-imply-mls}, and the proof of \Cref{lem:key} is outline in \Cref{section-lem-key}.

\subsection{The MLS constant bounds}
\label{section-imply-mls}
To obtain the MLS constant for the Glauber dynamics $P = \PGD_\mu$ on $\mu$, we need to relate the RHS of~\eqref{eq-lem-FBF} with the Dirichlet form $\+E_{P}(f,\log f)$. Formally, we prove the following inequality for any function $f: \{\0, \1\}^V \to \mathds{R}_{\geq 0}$:
\begin{align} \label{eq:compare-target-ouline}
\frac{Z_\pi}{\theta^{n}} \E[R \sim \Bin(V,1-\theta)]{\pi_{R}\tp{\mathds{1}_{R}} \cdot \Ent[\pi^{\mathds{1}_{R}}]{f}} \leq \frac{\+E_{P}(f, \log f)}{\rho^{\mathrm{GD}}_{\min}(\pi)}.
 \end{align}
Combining~\eqref{eq-lem-FBF} and~\eqref{eq:compare-target-ouline} implies $\Ent[\mu]{f} \leq \frac{C}{\rho^{\mathrm{GD}}_{\min}(\pi)}\+E_{P}(f, \log f)$. This proves~\Cref{lemma-imply-mls}.


Inequality~\eqref{eq:compare-target-ouline} can be verified by a careful comparison between the MLS constants of the original Glauber dynamics on $\mu$ and the Glauber dynamics on $\pi^{\mathds{1}_{R}}$. 
For each $\pi^{\mathds{1}_{R}}$ , 
the entropy $\Ent[\pi^{\mathds{1}_{R}}]{f}$ can be bounded by the modified log-Sobolev inequality for Glauber dynamics $P(\pi^{\mathds{1}_{R}})$ on $\pi^{\mathds{1}_{R}}$:
\begin{align*}
 \Ent[\pi^{\mathds{1}_{R}}]{f} \leq \frac{\+E_{P(\pi^{\mathds{1}_{R}})}(f, \log f)}{ \rho^{\mathrm{GD}}(\pi^{\mathds{1}_{R}})} \leq  \frac{\+E_{P(\pi^{\mathds{1}_{R}})}(f, \log f)}{ \rho^{\mathrm{GD}}_{\min}(\pi)},
\end{align*}
where $\+E_{P(\pi^{\mathds{1}_{R}})}(f, \log f)$ is the Dirichlet form  for the chain $P(\pi^{\mathds{1}_{R}})$.
Applying this inequality in the LHS of~\eqref{eq:compare-target-ouline}, 
an upper bound ${\+E_{P}(f, \log f)}$ with the original chain $P$ emerges  from taking  an ``average'' over all Dirichlet forms $\+E_{P(\pi^{\mathds{1}_{R}})}(f, \log f)$.
%
The detailed calculation is given in \Cref{section:compare-FD-GD}.


\subsection{Magnetized block factorization of entropy}
\label{section-lem-key}
We outline the proof of \Cref{lem:key}.
To establish the magnetized block factorization of entropy, 
we use the following transformation of distributions. 
\begin{definition}[\text{$k$-transformation~\cite{chen2021rapid}}] \label{def:k-trans}
Let $\mu$ be a distribution over $\{\0, \1\}^V$ and $k \geq 1$ an integer.
The \emph{$k$-transformation} of $\mu$, denoted by $\mu_k = \Rd(\mu, k)$, is a distribution over $\{\0,\1\}^{V\times[k]}$ defined as follows.
Let $\*X\sim\mu$. Then $\mu_k = \Rd(\mu, k)$ is the distribution of $\*Y\in\{\0,\1\}^{V\times [k]}$ constructed as follows:
\begin{itemize}
  \item if $X_v = \0$, then $Y_{(v,i)} = \0$ for all $i\in[k]$;
  \item if $X_v = \1$, then $Y_{(v,i^*)} = \1$ and $Y_{(v,i)} = \0$ for all $i\in[k]\setminus \{i^*\}$, where $i^*$ is chosen from $[k]$ uniformly and independently at random.
\end{itemize}
\end{definition}

%

\Cref{lem:key} is proved by combining the following two lemmas that are claimed for arbitrary distribution $\mu$ over $\{\0, \1\}^V$ and  $\eta > 0$.

\begin{lemma} \label{lem:muk-inf}
  If  ${\mu}$ is $\eta$-spectrally independent in $\infty$-norm with all fields,
  then for every integer $k \geq 1$, the $k$-transformed distribution 
   ${\mu_k}= \Rd(\mu, k)$ is $(\eta+1)$-spectrally independent in $\infty$-norm with all fields.
\end{lemma}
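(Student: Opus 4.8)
The plan is to fix an arbitrary positive local field $\nu=(\nu_{(v,j)})_{(v,j)\in V\times[k]}\in\mathds{R}_{>0}^{V\times[k]}$ on the ground set $V\times[k]$ of $\mu_k$, set $\pi\triangleq(\mu_k)^{(\nu)}$, and prove $\norm{\Ima_{\pi}}_\infty\le\eta+1$; since $\nu$ is arbitrary this is exactly the claimed conclusion. The first step is to record the structure of $\pi$ coming from \Cref{def:k-trans}. Every configuration in the support of $\mu_k$, hence of $\pi$, carries at most one $\1$ in each block $\{v\}\times[k]$. For such a $\sigma$ write $x(\sigma)\in\{\0,\1\}^V$ for its projection, $x(\sigma)_v=\1$ iff block $v$ of $\sigma$ contains a $\1$, and for a $\1$-block $v$ let $j^\ast_v(\sigma)\in[k]$ be the index of that $\1$. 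A direct computation from \Cref{def:k-trans} gives $\pi(\sigma)\propto\mu(x(\sigma))\prod_{v:\,x(\sigma)_v=\1}\frac{\nu_{(v,j^\ast_v(\sigma))}}{k}$. Equivalently, sampling $\*Y\sim\pi$ amounts to: first draw the projection $\*X\sim\mu^{(\hat\lambda)}$, where $\hat\lambda_v\triangleq\frac1k\sum_{l\in[k]}\nu_{(v,l)}>0$; then, independently over the vertices $v$ with $X_v=\1$, draw a ``winner'' $J_v\in[k]$ with $\Pr{J_v=j}=\nu_{(v,j)}/\sum_{l\in[k]}\nu_{(v,l)}$, and set $Y_{(v,J_v)}=\1$ with all other coordinates of $\*Y$ equal to $\0$. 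The feature I will use is that, conditioned on $\*X$, the winners $(J_v)$ over distinct $\1$-blocks are mutually independent with the laws above, and the law of $J_v$ does not depend on the rest of $\*X$.

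Now fix a row index $(u,i)$; the goal is $\sum_{(v,j)\in V\times[k]}\abs{\Ima_{\pi}((u,i),(v,j))}\le\eta+1$. We may assume $Y_{(u,i)}$ is not $\pi$-almost-surely constant, since otherwise this row of $\Ima_\pi$ is zero by \Cref{definition-inf-matrix}. I would split the sum into three parts. The diagonal term $\Ima_\pi((u,i),(u,i))$ is $0$ by definition. For the \emph{intra-block} terms, $v=u$ and $j\ne i$: conditioning on $Y_{(u,i)}=\1$ forces every other coordinate of block $u$ to be $\0$, so $\pi^{(u,i)\gets\1}_{(u,j)}(\1)=0$ and hence $\Ima_\pi((u,i),(u,j))=-\Pr[\pi]{Y_{(u,j)}=\1\mid Y_{(u,i)}=\0}\le0$; summing absolute values over $j\ne i$ and using that the events $\{Y_{(u,j)}=\1\}_{j\in[k]}$ are pairwise disjoint (at most one winner per block),
\[
\sum_{j\ne i}\abs{\Ima_\pi((u,i),(u,j))}=\Pr[\pi]{Y_{(u,j)}=\1\text{ for some }j\ne i\ \mid\ Y_{(u,i)}=\0}\le1.
\]

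The main step is the \emph{inter-block} bound, where the hypothesis on $\mu$ enters. For $v\ne u$ and $c\in\{\0,\1\}$, the conditional independence of the winners gives $\Pr[\pi]{Y_{(v,j)}=\1\mid Y_{(u,i)}=c}=\frac{\nu_{(v,j)}}{\sum_{l\in[k]}\nu_{(v,l)}}\,\Pr[\pi]{X_v=\1\mid Y_{(u,i)}=c}$, because $Y_{(u,i)}$ is a deterministic function of $(X_u,J_u)$ while, given $X_v=\1$, the winner $J_v$ is independent of $(X_u,J_u)$. Next, conditioning $\pi$ on $Y_{(u,i)}=\1$ induces on $\*X$ exactly the law $(\mu^{(\hat\lambda)})^{u\gets\1}$, whereas conditioning on $Y_{(u,i)}=\0$ induces the mixture $\alpha\,(\mu^{(\hat\lambda)})^{u\gets\0}+(1-\alpha)\,(\mu^{(\hat\lambda)})^{u\gets\1}$ with $\alpha\triangleq\Pr[\pi]{X_u=\0\mid Y_{(u,i)}=\0}\in[0,1]$ (split according to the value of $X_u$; note that $X_u=\0$ forces $Y_{(u,i)}=\0$, while given $X_u=\1$ the event $\{Y_{(u,i)}=\0\}=\{J_u\ne i\}$ is again independent of the rest of $\*X$). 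Subtracting, every inter-block entry collapses to
\[
\Ima_\pi((u,i),(v,j))=\alpha\cdot\frac{\nu_{(v,j)}}{\sum_{l\in[k]}\nu_{(v,l)}}\cdot\Ima_{\mu^{(\hat\lambda)}}(u,v),
\]
where, when $X_u$ is $\pi$-a.s.\ constant — necessarily $\equiv\1$, given our assumption on $Y_{(u,i)}$ — both sides vanish since $\alpha=0$ and $\Ima_{\mu^{(\hat\lambda)}}(u,v)=0$ by \Cref{definition-inf-matrix}. Summing absolute values and using $\alpha\le1$, $\sum_{j\in[k]}\nu_{(v,j)}/\sum_{l\in[k]}\nu_{(v,l)}=1$, and the assumption that $\mu$ is $\eta$-spectrally independent in $\infty$-norm with all fields applied to the positive field $\hat\lambda\in\mathds{R}_{>0}^V$,
\[
\sum_{v\ne u}\sum_{j\in[k]}\abs{\Ima_\pi((u,i),(v,j))}=\alpha\sum_{v\ne u}\abs{\Ima_{\mu^{(\hat\lambda)}}(u,v)}\le\norm{\Ima_{\mu^{(\hat\lambda)}}}_\infty\le\eta.
\]
Adding the three parts gives $\sum_{(v,j)}\abs{\Ima_\pi((u,i),(v,j))}\le0+1+\eta$, which completes the proof.

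The hard part is the conditional-independence bookkeeping in the last two steps: one has to verify carefully that the auxiliary winner randomness of the $k$-transformation is conditionally independent, given the projection $\*X$, of the configuration outside its own block, so that all of the inter-block influence of $\pi$ factors through $\Ima_{\mu^{(\hat\lambda)}}$ — the influence matrix of the \emph{single} magnetized copy $\mu^{(\hat\lambda)}$ carrying the averaged field $\hat\lambda$ — while the genuinely new influence, the intra-block one, is at most $1$ for the trivial reason that a block holds at most one $\1$. One also has to take a little care with degenerate pinnings (a coordinate of $\pi$ or of $\mu^{(\hat\lambda)}$ being a.s.\ constant), where the signed influences in question vanish by definition; checking that these cases are consistent with the displayed identities is routine.
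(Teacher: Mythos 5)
Your proposal is correct and follows the same overall strategy as the paper: impose an arbitrary field on $\mu_k$, pass to the averaged field $\hat\lambda_v=\frac1k\sum_l\nu_{(v,l)}$ on $\mu$, and split the row sum into an intra-block part bounded by $1$ (at most one $\1$ per block) and an inter-block part bounded by $\norm{\Ima_{\mu^{(\hat\lambda)}}}_\infty\le\eta$. The only real difference is in how the inter-block entry bound is obtained: the paper computes the two conditional marginals explicitly via partition functions, introduces an auxiliary field $\*\varphi'$ that differs from $\hat\lambda$ only at $u$, and then compares $\pi^{u\gets\1}_v$ with $\nu_v$ by a coupling, yielding the inequality $\abs{\Ima_{\pi_k}(u_i,v_j)}\le\frac{\phi_{v_j}}{\sum_h\phi_{v_h}}\abs{\Ima_{\mu^{(\hat\lambda)}}(u,v)}$; you instead describe $\pi$ by two-stage sampling (projection $\*X\sim\mu^{(\hat\lambda)}$, then independent block winners) and observe that conditioning on $Y_{(u,i)}=\0$ induces a mixture of the two pinnings of $X_u$, which gives the exact identity $\Ima_{\pi}((u,i),(v,j))=\alpha\cdot\frac{\nu_{(v,j)}}{\sum_l\nu_{(v,l)}}\cdot\Ima_{\mu^{(\hat\lambda)}}(u,v)$ and avoids both the auxiliary field and the coupling. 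Your handling of the degenerate cases (constant $Y_{(u,i)}$ or constant $X_u$) is consistent with \Cref{definition-inf-matrix}, so no gap.
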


\begin{lemma}\label{lem:muk-main}
If for every integer $k \geq 1$, 
 ${\mu_k}= \Rd(\mu, k)$ is $\eta$-spectrally independent in $\infty$-norm with all fields,
then for every $\theta \in (0,1)$,  $\mu$ satisfies $\theta$-magnetized block factorization of entropy with parameter $C = \tp{\frac{\e}{\theta}}^{\eta + 2}$.
\end{lemma}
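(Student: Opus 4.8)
The plan is to transfer a block factorization of entropy from the $k$-transformed distribution $\mu_k$ down to the original distribution $\mu$, and then let $k \to \infty$ so that the random subset of $V \times [k]$ we condition on converges to a $\Bin(V, 1-\theta)$-distributed subset of $V$. First I would recall that under the hypothesis, $\mu_k$ is $\eta$-spectrally independent in $\infty$-norm with all fields for every $k$; since spectral independence in $\infty$-norm with all fields is inherited under pinnings (one checks that pinning a coordinate of $\mu_k$ yields a distribution of the same form, or at worst a product with such a distribution), the local-to-global machinery of~\cite{anari2020spectral,chen2020optimal,alev2020improved} gives a uniform-block factorization of entropy for $\mu_k$: for a uniformly random block $B$ of an appropriate size $\beta_k |V\times[k]|$, $\Ent_{\mu_k}[g] \le C_k\, \E_B[\mu_k\text{-average of } \Ent[g] \text{ over the complement of } B]$ with $C_k = (e/\beta_k)^{O(\eta)}$ or similar. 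The key combinatorial observation, due to~\cite{chen2021rapid}, is how a uniformly random subset of $V \times [k]$ of density roughly $\theta$ interacts with the structure of $\mu_k$: within each ``fiber'' $\{v\}\times[k]$, the event that the fiber is entirely excluded from the chosen set has probability $\approx \theta^k$... wait, I should instead arrange it so that conditioning on coordinates of $\mu_k$ that are set to $\0$ corresponds exactly to revealing that $X_v = \0$ in $\mu$, and the pinning $Y_{(v,i)} = \0$ for all $i$ in a fiber is consistent with $X_v$ being either value but biased.

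The cleanest route, which I expect the authors take, is: express the RHS of~\eqref{eq-lem-FBF} as a limit of block-factorization RHS terms for $\mu_k$. Concretely, generate $R \sim \Bin(V, 1-\theta)$ by, for each $v$, keeping $v$ out of $R$ with probability $\theta$; in $\mu_k$, conditioning on the block $\{v\}\times[k]$ in a way that forces $Y_{(v,i)} = \0$ for all $i$ has $\mu_k$-probability proportional to the probability $X_v = \0$, while revealing a single $\1$ somewhere in the fiber occurs with the complementary weight spread over $k$ choices. As $k\to\infty$, choosing a random block of $\mu_k$ of density tuned so each fiber is ``hit'' with probability $1-\theta$ reproduces exactly the $\Bin(V,1-\theta)$ law, and the normalization $Z_\pi/\theta^{|V|}$ emerges as the product of the per-fiber partition-function ratios. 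The factorization constant $C_k = (e/\theta_k)^{\eta+2}$ from the local-to-global theorem applied to the $\eta$-spectrally-independent $\mu_k$ converges to $(e/\theta)^{\eta+2}$. I would make this rigorous by writing the uniform-block entropy factorization for $\mu_k$ with block-keeping probability $1-\theta$ per coordinate (not per fiber), using that the $\infty$-norm spectral independence bound $\eta$ is uniform in $k$, and then pushing $f$ forward along the projection $\{\0,\1\}^{V\times[k]} \to \{\0,\1\}^V$ and using that $\Ent$ is preserved under this deterministic lifting.

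The main obstacle will be the limiting argument and the bookkeeping of conditional distributions: one must verify that the conditional distribution $(\mu_k)^{\text{(block pinned to }\0)}$, when projected back to $\{\0,\1\}^V$, is exactly $\pi^{\mathds{1}_R}$ for the appropriate $R$ (the coordinates revealed to be non-all-zero), and that the weights combine to give precisely $\frac{Z_\pi}{\theta^{|V|}}\pi_R(\mathds{1}_R)$ rather than something off by a $k$-dependent factor. A secondary technical point is justifying interchange of the limit $k\to\infty$ with the infimum defining the factorization constant — but since for each fixed $k$ we get factorization with constant $(e/\theta)^{\eta+2}$ independent of $k$ (the $k$-dependence cancels once the block density is chosen correctly), no genuine limit of constants is needed; only the combinatorial identity relating the $k$-transformed block process to $\Bin(V,1-\theta)$ needs $k\to\infty$, and in fact a clean statement may hold already at finite $k$ via the exact correspondence between fibers and vertices. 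I would first nail down that finite-$k$ correspondence, then invoke the uniform-block factorization for $\mu_k$ (which follows from $(\eta)$-spectral independence with all fields plus the Alev--Lau / Chen--Liu--Vigoda local-to-global theorem), and only at the end translate back to $\mu$.
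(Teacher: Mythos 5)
Your overall architecture matches the paper's: first establish an $\lceil\theta kn\rceil$-uniform block factorization of entropy for $\mu_k$ with a constant uniform in $k$, then lift $f$ to $f^k$ along the projection $\sigma\mapsto\sigma^\star$ (which preserves entropy exactly), and let $k\to\infty$, using concentration of the number of chosen coordinates per fiber to turn the uniform random block into the $\Bin(V,1-\theta)$ law and to make the per-fiber weights assemble into $\frac{Z_\pi}{\theta^{|V|}}\pi_R(\mathds{1}_R)$. This is precisely the content of \Cref{corollary-muk-UBF} and \Cref{lem:FBF}, the latter proved in the paper via the multivariate hypergeometric distribution and its concentration. Your reading of the conditioning structure (a revealed $\1$ in a fiber forces $v\in R$; an all-$\0$ revealed part of a fiber leaves $X_v$ free but biased by the fraction of unrevealed coordinates) is also the right one.

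There is, however, one genuine gap in how you obtain the uniform block factorization for $\mu_k$. You invoke the local-to-global machinery of Alev--Lau and Chen--Liu--Vigoda. Those theorems produce a factorization constant that depends not only on the spectral independence parameter but also on a marginal lower bound (the ``$b$-marginally bounded'' hypothesis). For the $k$-transformed distribution this bound degenerates: each coordinate $(v,i)$ has $\Pr{Y_{(v,i)}=\1}\le 1/k$, so the constant furnished by those theorems blows up as $k\to\infty$ and the limiting argument collapses. This is exactly why the paper routes through entropic independence instead: one passes to the homogenization of $\mu_k$, shows it is $\frac{1}{\eta+1}$-fractionally log-concave using spectral independence \emph{with all fields} (\Cref{lemma-sp-AASV} and \Cref{proposition-cor-FLC}; note the hypothesis is about fields, not pinnings), and applies \Cref{theorem-AJK+}, whose contraction rate $\kappa(n-\ell,n,\eta+1)$ depends only on $\eta$ and the block density $\ell/(nk)$, yielding the $k$-uniform constant $\tp{\frac{\e}{\theta}}^{\eta+2}$. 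Without this substitution your assertion that ``the factorization constant converges to $(e/\theta)^{\eta+2}$'' is unjustified. A secondary caveat: your hope that an exact finite-$k$ correspondence might avoid the limit does not pan out --- at finite $k$ the per-fiber field is the random quantity $|S\cap(\{v\}\times[k])|/k$, distributed as a marginal of the multivariate hypergeometric distribution rather than the constant $\theta$, so the $k\to\infty$ limit together with the concentration estimate is genuinely needed.
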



\Cref{lem:muk-inf} is proved by a coupling between $\mu$ and $\mu_k$. 
The proof is given in \Cref{section-SI-muk}.

We now give an outline of the proof of \Cref{lem:muk-main}.
First, we show that for all sufficiently large $k$, the distribution $\mu_k$ satisfies the \emph{uniform block factorization of entropy} in~\cite{chen2020optimal}.
Next, we show that when $k \to \infty$, the uniform block factorization of entropy for $\mu_k$ implies the magnetized block factorization of entropy  for the original distribution $\mu$.

\begin{definition}[uniform block factorization of entropy \text{\cite{CP20}}]
Let $V$ be a set of size $n=|V|$, $1 \leq \ell \leq n$ an integer, and $C>0$.
A distribution $\mu$ over $\{\0,\1\}^{V}$ is said to satisfy the \emph{$\ell$-uniform block factorization of entropy with parameter $C$} if for all $f:\Omega(\mu) \to \mathds{R}_{\geq 0}$,
\begin{align*}
\Ent[\mu]{f} \leq \frac{C}{\binom{n}{\ell}} \sum_{S \in \binom{V}{\ell}}\mu[\Ent[S]{f}],
\end{align*}
where $\mu[\Ent[S]{f}] \triangleq \sum_{\sigma \in \Omega(\mu_{V \setminus S})}\mu_{V \setminus S}(\sigma)\cdot\Ent[\mu^\sigma]{f}$. 
\end{definition}

The following lemma shows that for all sufficiently large integers $k$, the transformed distribution $\mu_k$ satisfies the uniform block factorization of entropy.
The lemma can be proved using the approach of \emph{entropic independence}  developed in~\cite{AASV21,anari2021entropic}. A formal proof is included in \Cref{section-proof-AJK+}.
\begin{lemma}[\text{\cite{AASV21,anari2021entropic}}]
\label{corollary-muk-UBF}
Let $\mu$ be a distribution over $\{\0,\1\}^V$ with $n=|V|$.
Let $\eta > 0$ and $\theta \in (0,1)$.
For all integers $k > \frac{\eta + 2}{\theta n}$, for the $k$-transformed distribution $\mu_k = \Rd(\mu, k)$,
if  ${\mu_k}$ is $\eta$-spectrally independent in $\infty$-norm with all fields,
then $\mu_k$ satisfies $\ctp{\theta k n}$-uniform block factorization of entropy with $C = \tp{\frac{\mathrm{e}}{\theta}}^{\eta + 2}$.
\end{lemma}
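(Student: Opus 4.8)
The plan is to realize the $\ctp{\theta k n}$-uniform block factorization of entropy for $\mu_k$ as a direct instance of the entropic-independence theory of \cite{AASV21,anari2021entropic}, with $N\defeq nk$ (the number of variables of $\mu_k$, i.e.\ $|V\times[k]|$) as the ambient dimension and $\ell\defeq\ctp{\theta n k}=\ctp{\theta N}$ as the block size. Beyond invoking that theory, the only arithmetic is to bound the resulting factorization constant, and here the hypothesis $k>\frac{\eta+2}{\theta n}$ enters for a single reason: it is equivalent to $\theta N>\eta+2$, and hence forces $\ell>\eta+2$, so the block size comfortably exceeds the spectral-independence parameter --- precisely the regime in which the local-to-global entropy machinery applies and delivers a clean constant.

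First I would record the tool, namely the specialization of \cite{AASV21,anari2021entropic} to distributions over $\{\0,\1\}^N$: if $\nu$ over $\{\0,\1\}^N$ is $\eta$-spectrally independent in $\infty$-norm with all fields, then for every integer $\ell>\eta$, $\nu$ satisfies $\ell$-uniform block factorization of entropy with parameter
\[
  C_\nu(\ell)\ \le\ \tp{\frac{\e N}{\ell}}^{\eta+1}.
\]
I would cite this as a black box. Under the hood it combines two ingredients: (i) spectral independence in $\infty$-norm under \emph{all} external fields is, via the standard homogenization that doubles the ground set (and preserves both the $\infty$-norm bound and the ``all fields'' quantifier), equivalent to fractional log-concavity / entropic independence of $\nu$ with parameter governed by $\eta$ \cite{AASV21}; and (ii) the local-to-global theorem for entropy \cite{anari2021entropic} converts this into the displayed uniform block factorization --- the whole point of routing through entropic independence / fractional log-concavity, rather than plain spectral independence, being that one avoids the telescoping blow-up and keeps the constant polynomial in $N/\ell$. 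Uniform block factorization transfers between $\nu$ and its homogenization in the standard way (test functions for $\nu$ ignore the mirror coordinates).

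It then remains only to plug in. Since $\mu_k$ is $\eta$-spectrally independent in $\infty$-norm with all fields (the hypothesis of the lemma) and $\ell>\eta+2>\eta$, the tool applies with $\nu=\mu_k$ and $N=nk$, giving $\Ent[\mu_k]{f}\le C_{\mu_k}(\ell)\cdot\tfrac1{\binom{N}{\ell}}\sum_{S\in\binom{V\times[k]}{\ell}}\mu_k\bigl[\Ent[S]{f}\bigr]$ with $C_{\mu_k}(\ell)\le(\e N/\ell)^{\eta+1}$. Finally $\ell=\ctp{\theta N}\ge\theta N$ yields $\e N/\ell\le\e/\theta$, so $C_{\mu_k}(\ell)\le(\e/\theta)^{\eta+1}\le(\e/\theta)^{\eta+2}$, which is the asserted parameter. (The extra unit in the exponent is slack: it absorbs, for instance, the replacement of $\lceil\eta\rceil$ by $\eta+1$ and any ceiling in $\ell$ in the precise form of the cited bound.)

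The step I expect to demand the most care is purely quantitative: pinning down the exact entropic-independence (equivalently, fractional-log-concavity) parameter that $\eta$-spectral independence in $\infty$-norm produces, and verifying that homogenization does not inflate it --- this is what separates landing on $(\e/\theta)^{\eta+2}$ from landing only on $(\e/\theta)^{O(\eta)}$. Conceptually nothing is new: the definitions, the homogenization, and the local-to-global entropy theorem are imported verbatim from \cite{AASV21,anari2021entropic}; \Cref{corollary-muk-UBF} is that technology instantiated with ground set $V\times[k]$ of size $N=nk$ and block size $\ctp{\theta n k}$.
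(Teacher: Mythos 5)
Your proposal is correct and follows essentially the same route as the paper: apply the homogenization / fractional-log-concavity / local-to-global entropy machinery of \cite{AASV21,anari2021entropic} to $\mu_k$ viewed as a distribution on $N=nk$ variables with block size $\ell=\ctp{\theta nk}$, using $k>\frac{\eta+2}{\theta n}$ only to ensure $\ell$ exceeds the spectral-independence threshold, and then bound $C\le(\e N/\ell)^{\text{exponent}}\le(\e/\theta)^{\eta+2}$. The paper writes out the black box explicitly (its Proposition on uniform block factorization under spectral independence with all fields, obtaining $C=\binom{nk}{\hat\eta+1}/\binom{\ell}{\hat\eta+1}$ with $\hat\eta=\ctp{\eta}$, whence the exponent $\hat\eta+1\le\eta+2$ rather than your optimistic $\eta+1$), but you correctly flag that this rounding is exactly the slack your final exponent $\eta+2$ absorbs.
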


Our next lemma relates the uniform block factorization of entropy for the $k$-transformed distribution $\mu_k = \Rd(\mu, k)$ to  the magnetized block factorization of entropy for $\mu$. 
A formal proof is given in \Cref{sec:FBF}.

\begin{lemma} \label{lem:FBF} 
Let $\mu$ a distribution over $\{\0,\1\}^V$ with $n=|V|$. Let $\theta \in (0, 1)$ and $C > 0$. 
  If there is a finite $k_0$ such that for all integers $k\ge k_0$, the distribution $\mu_k = \Rd(\mu, k)$ satisfies $\lceil \theta kn \rceil$-uniform block factorization with parameter $C$, then $\mu$ satisfies $\theta$-magnetized block factorization of entropy with parameter $C$.
\end{lemma}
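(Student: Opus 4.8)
\textbf{Proof proposal for \Cref{lem:FBF}.}

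The plan is to pass to the limit $k\to\infty$ in the uniform block factorization inequality for $\mu_k$ and show that it converges, term by term, to the magnetized block factorization inequality for $\mu$. The first step is to set up a clean correspondence between objects living on $\{\0,\1\}^{V\times[k]}$ and objects on $\{\0,\1\}^V$. Given $f:\Omega(\mu)\to\mathds{R}_{\ge 0}$, lift it to $\tilde f:\Omega(\mu_k)\to\mathds{R}_{\ge 0}$ by $\tilde f(\*Y)=f(\*X)$ where $X_v = \1$ iff $Y_{(v,i)}=\1$ for some $i\in[k]$; this is exactly the pushforward identification, and since $\Rd(\mu,k)$ pushes forward to $\mu$ under the map $\*Y\mapsto\*X$, we get $\Ent[\mu_k]{\tilde f} = \Ent[\mu]{f}$ for every $k$. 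So the left-hand sides already match identically; the whole content is in the right-hand side.

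Next I would analyze the RHS of the $\lceil\theta k n\rceil$-uniform block factorization for $\mu_k$, namely $\frac{C}{\binom{kn}{\ell}}\sum_{S\in\binom{V\times[k]}{\ell}}\mu_k[\Ent[S]{\tilde f}]$ with $\ell=\lceil\theta kn\rceil$. A uniformly random $\ell$-subset $S$ of the $kn$ coordinates $V\times[k]$, when $\ell\approx\theta kn$, behaves in the large-$k$ limit like including each coordinate independently with probability $\theta$; equivalently, the complement $\bar S = (V\times[k])\setminus S$ looks like $\Bin(V\times[k],1-\theta)$. I would condition on how $\bar S$ meets each ``column'' $\{v\}\times[k]$: let $R=R(\bar S)=\{v\in V : \bar S\cap(\{v\}\times[k])=\emptyset\}$ be the set of vertices \emph{all} of whose copies are pinned. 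For $v\notin R$, at least one copy $(v,i)$ is free, and conditioning $\mu_k$ on the pinned copies in column $v$ still leaves column $v$ free to realize $X_v\in\{\0,\1\}$; for $v\in R$, every copy is pinned. The key probabilistic fact is that as $k\to\infty$ with $\ell=\lceil\theta kn\rceil$, $\Pr{v\in R}=\binom{kn-k}{\ell}/\binom{kn}{\ell}\to\theta$ for each $v$, and more strongly $R$ converges in distribution to $\Bin(V,1-\theta)$'s complement — wait, to $\{v : \text{all copies pinned}\}$, which has each $v$ present independently with probability $\to\theta$; so $V\setminus R \sim \Bin(V,1-\theta)$ in the limit. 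Moreover, conditioned on $R$, the pinned values on the copies in columns outside $R$ are (in the $k\to\infty$ limit, by the structure of $\Rd$) forced: a pinned copy $(v,i)$ with $v\notin R$ takes value $\0$ with probability $\to 1$ (since in a random configuration only one of the $k$ copies of an ``up'' vertex is $\1$, and that copy is free with probability $\to 1$ among the free ones). Hence the conditional distribution $\mu_k$ restricted to the free coordinates, given $\bar S$ and the pinned values, converges to $\Rd(\pi^{\mathds 1_R},\,\cdot\,)$-type distributions whose pushforward to $\{\0,\1\}^V$ is exactly $\pi^{\mathds 1_R}$ with $\pi=\mu^{(\theta)}$; this is where the magnetization by $\theta$ enters, because re-weighting a random $\ell$-subset by whether it pins all copies of a vertex is precisely a size-biasing that produces the $\theta^{\|\sigma\|_+}$ factor defining $\pi$.

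Putting these together, each summand $\mu_k[\Ent[S]{\tilde f}]$, averaged over $S$, should converge to $\frac{Z_\pi}{\theta^{|V|}}\E[R\sim\Bin(V,1-\theta)]{\pi_R(\mathds 1_R)\cdot\Ent[\pi^{\mathds 1_R}]{f}}$ — the normalization factor $Z_\pi/\theta^{|V|}$ arising from comparing the size-biased column-pinning probabilities to the unbiased $\Bin(V,1-\theta)$ weights, and the $\pi_R(\mathds 1_R)$ factor arising because only the event that the pinned copies in columns outside $R$ all take value $\0$ (probability $\to\pi_R(\mathds 1_R)$ after the magnetization) contributes a nonzero conditional entropy. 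I would make these limits rigorous by (i) writing $\mu_k[\Ent[S]{\tilde f}]$ as an explicit finite sum over pinnings, (ii) grouping by $R$, (iii) using dominated convergence (all quantities are bounded since $\Omega(\mu)$ is finite and $f$ is fixed, and entropies are continuous in the underlying probabilities) to exchange limit and sum, and (iv) checking the binomial-coefficient ratios converge to the claimed products. Since the inequality $\Ent[\mu_k]{\tilde f}\le \text{RHS}_k$ holds for all $k\ge k_0$ with the \emph{same} constant $C$, taking $k\to\infty$ yields $\Ent[\mu]{f}\le C\cdot\frac{Z_\pi}{\theta^{|V|}}\E[R\sim\Bin(V,1-\theta)]{\pi_R(\mathds 1_R)\Ent[\pi^{\mathds 1_R}]{f}}$, which is exactly \eqref{eq-lem-FBF}. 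The degenerate convention ($\pi_R(\mathds 1_R)\cdot\Ent[\pi^{\mathds 1_R}]{f}=0$ when $\pi_R(\mathds 1_R)=0$) is handled automatically since those terms contribute $0$ along the sequence.

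\textbf{Main obstacle.} The delicate point is the interchange of the limit $k\to\infty$ with the combinatorial averaging over $\ell$-subsets, and precisely identifying the limiting measure on the free coordinates together with the exact normalization constants $Z_\pi/\theta^{|V|}$ and the weights $\pi_R(\mathds 1_R)$. One must carefully track, for a fixed vertex $v\notin R$, the joint behaviour of (a) which of its $k$ copies are pinned and (b) their pinned values under $\mu_k$ conditioned appropriately — the $\Rd$ construction makes at most one copy per ``up'' vertex equal to $\1$, so the probability that a pinned copy is $\1$ is $O(1/k)$, and showing this washes out correctly (so that $\pi^{\mathds 1_R}$, not some perturbation, appears) is the crux. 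Everything else is bookkeeping with binomial coefficients and continuity of entropy.
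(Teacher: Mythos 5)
Your overall strategy --- lift $f$ to $f^k$ on $\Omega(\mu_k)$, observe $\Ent[\mu_k]{f^k}=\Ent[\mu]{f}$, and pass to the limit $k\to\infty$ in the right-hand side of the uniform block factorization --- is exactly the paper's strategy, and the first step is correct. The gap is in the crux that you yourself flag: the decomposition of the averaged conditional entropy. You condition on a set $R$ that is a function of the random block $S$ alone, described in words as the vertices all of whose $k$ copies are pinned (your formula $\bar S\cap(\{v\}\times[k])=\emptyset$ actually describes the opposite event, but the argument fails under either reading), and you assert the ``key probabilistic fact'' $\Pr{v\in R}=\binom{kn-k}{\ell}\big/\binom{kn}{\ell}\to\theta$. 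This is false: that ratio equals $\prod_{i=0}^{k-1}\frac{kn-\ell-i}{kn-i}\approx(1-\theta)^k\to 0$, and the complementary event that all $k$ copies of $v$ are free has probability $\approx\theta^k\to 0$ as well. So with probability tending to $1$ no vertex has its column entirely pinned or entirely free, and a decomposition indexed by a set depending only on $S$ cannot converge to the expression in \eqref{eq-lem-FBF}.

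The decomposition that works (this is \Cref{lem:PBF} in the paper) indexes by a set depending jointly on $S$ and the sampled configuration $\sigma\sim\mu_k$: namely $R=F(\rho)$, the set of vertices $v$ with $\sigma^\star_v=\1$ whose unique $\1$-copy lies in the pinned set. Conditionally on $\sigma^\star$ and on the column counts $a_v=|S\cap C_v|$, each up-vertex lands in $R$ independently with probability $1-a_v/k\to 1-\theta$ (this is where $\Bin(V,1-\theta)$ enters), while an up-vertex whose $\1$-copy is free, as well as every down-vertex, stays unpinned but acquires an effective local field $a_v/k\to\theta$; summing $\mu(\tau)\theta^{\norm{\tau}_+}$ over $\tau$ consistent with $\mathds{1}_R$ is what produces the weight $\pi_R(\mathds{1}_R)\cdot Z_\pi/\theta^{|V|}$ --- not, as you suggest, the event that the pinned copies outside $R$ all read $\0$, which is automatic given the structure of the $k$-transformation. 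Making this rigorous requires the exact finite-$k$ identity in terms of the multivariate hypergeometric distribution of the column counts, followed by its concentration around $\theta$ and continuity of the entropy functional (\Cref{lem:LBF}). Your closing paragraph correctly locates where the difficulty lies, but the conditioning scheme you propose to resolve it does not work.
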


\Cref{lem:muk-main} is a straightforward consequence of \Cref{corollary-muk-UBF} and \Cref{lem:FBF}.

Finally, we briefly outline the proof of \Cref{lem:FBF}.
Fix an integer $k \ge k_0$. We use $V_k$ to denote the  $V \times [k]$ and $v_i$ to denote the pair $(v,i)$. 
Since $\mu_k$ satisfies the $\lceil \theta kn \rceil$-uniform block factorization with parameter $C$, 
it holds that for any $g: \Omega(\mu_k) \to \mathds{R}$,
\begin{align}
\label{eq-muk-ubf}
\Ent[\mu_k]{g} \leq \frac{C}{\binom{nk}{\ctp{\theta n k}}} \sum_{S \in \binom{V_k}{\ctp{\theta n k}}}\mu_k\left[\Ent[S]{g}\right] = C \cdot \EE[S \sim \binom{V_k}{\ctp{\theta n k}}]{\mu_k\left[\Ent[S]{g}\right]},	
\end{align}
where $S \sim \binom{V_k}{\ctp{\theta n k}}$ is sampled uniformly at random from $\binom{V_k}{\ctp{\theta n k}}$.
%
By the definition of the $k$-transformation, a mapping from $\Omega(\mu_k)$ to $\Omega(\mu)$ can be naturally constructed as follows: 
For any $\sigma \in \Omega(\mu_k)$, 
\begin{align*}
\forall v \in V,\quad \sigma^\star_v = \begin{cases}
 \1 & \exists i \in [k] \text{ s.t. } \sigma_{(v,i)} = \1;\\
 \0 & \forall i \in [k], \sigma_{(v,i)} = \0.
 \end{cases}
\end{align*}
Given any function $f: \Omega(\mu) \to \mathds{R}_{\geq 0}$, define a function $f^k: \Omega(\mu_k) \to \mathds{R}_{\geq 0}$ by $f^k(\sigma) = f(\sigma^\star)$ for all $\sigma \in \Omega(\mu_k)$. The following identities can be verified:
\begin{align}
\Ent[\mu]{f} &= \Ent[\mu_k]{f^k}\label{eq-rela-1},\\
\frac{Z_\pi}{\theta^{n}} \E[R \sim \Bin(V,1-\theta)]{\pi_{R}\tp{\mathds{1}_{R}} \cdot \Ent[\pi^{\mathds{1}_{R}}]{f}} &= \lim_{k \to \infty}\EE[S \sim \binom{V_k}{\ctp{\theta n k}}]{\mu_k\left[\Ent[S]{f^k}\right]}.\label{eq-rela-2}
\end{align}
\Cref{eq-rela-1} follows from the bijection. 
\Cref{eq-rela-2} follows from the concentration property of the random set $S$. 
Applying \eqref{eq-rela-1} and~\eqref{eq-rela-2} together with~\eqref{eq-muk-ubf} and letting $k \to \infty$ gives:
\begin{align*}
\Ent[\mu]{f} &\overset{\text{by~\eqref{eq-rela-1}}}{=}  \lim_{k \to \infty} \Ent[\mu_k]{f^k}\\
&\overset{\text{by}~\eqref{eq-muk-ubf}}{\leq} \lim_{k \to \infty}\EE[S \sim \binom{V_k}{\ctp{\theta n k}}]{\mu_k\left[\Ent[S]{f^k}\right]}\\
&\overset{\text{by~\eqref{eq-rela-2}}}{=}  	\frac{Z_\pi}{\theta^{n}} \E[R \sim \Bin(V,1-\theta)]{\pi_{R}\tp{\mathds{1}_{R}} \cdot \Ent[\pi^{\mathds{1}_{R}}]{f}}.
\end{align*}
This proves the magnetized block factorization of entropy for $\mu$.

\section{Definitions and Preliminaries}

\subsection{Markov chain background}

Let $\Omega$ be a finite state space and $(X_t)_{t \geq 0}$ is a Markov chain on it.
The Markov chain $(X_t)_{t \geq 0}$ can be represented by a transition matrix $P \in \mathds{R}_{\geq 0}^{\Omega \times \Omega}$. We often use the matrix $P$ to refer to $(X_t)_{t \geq 0}$ when the context is clear.
A distribution $\mu$ is called a stationary distribution of $P$ if $\mu = \mu P$.
A Markov chain is said to be
\begin{itemize}
\item \emph{irreducible}, if for any $X, Y \in \Omega$, there is an integer $t \geq 0$ such that $P^t(X, Y) > 0$;
\item \emph{aperiodic}, if for any $X \in \Omega$, $\mathrm{gcd}\{t > 0 \mid P^t(X, X) > 0\} = 1$;
\item \emph{reversible} with respect to $\mu$, if the following \emph{detailed balanced equation} is satisfied
  \begin{align*}
    \forall X, Y \in \Omega, \quad \mu(X)P(X, Y) &= \mu(Y)P(Y, X),
  \end{align*}
  which also implies that $\mu$ is a stationary distribution of $P$.
\end{itemize}
It is well-known that when a Markov chain is both irreducible and aperiodic, then it has a unique stationary distribution~\cite{levin2009markov}.

Let $\mu$ be a distribution with support $\Omega$ and $P$ be a Markov chain over $\Omega$ with the unique stationary distribution $\mu$.
To measure the convergence rate of $P$, we define the \emph{mixing time} of $P$ to be
\begin{align*}
  \forall 0 < \epsilon < 1,\quad T_{\mathrm{mix}}(\epsilon) &\triangleq \max_{X \in \Omega} \min \left\{ t \mid \DTV{P^t(X, \cdot)}{\mu} \leq \epsilon \right\},
\end{align*}
where $\DTV{P^t(X, \cdot)}{\mu}$ is the \emph{total variation distance} between $P^t(X, \cdot)$ and $\mu$, which is defined as
\begin{align*}
  \DTV{P^t(X, \cdot)}{\mu} &\triangleq \frac{1}{2} \sum_{Y \in \Omega} \abs{P^t(X, Y) - \mu(Y)}.
\end{align*}

There is a sharp connection between the mixing time and the \emph{modified log-Sobolev (MLS) constant}.
To introduce it, we define the \emph{Dirichlet form} with respect to $P$ as
\begin{align*}
  \+E_P(f, g) \triangleq \inner{f}{(I - P) g}_\mu \triangleq \sum_{X \in \Omega} \mu(X)\; f(X)\; (I - P)g(X),
\end{align*}
where $f, g$ are two functions over $\Omega$ and $I$ denotes the identity matrix.
If the Markov chain $P$ is reversible with respect to $\mu$, the Dirichlet form can be written as
\begin{align*}
\+E_P(f, g) = \frac{1}{2}\sum_{\sigma,\tau \in \Omega}\mu(\sigma)P(\sigma,\tau)(f(\sigma)-f(\tau))(g(\sigma)-g(\tau)).	
\end{align*}
Moreover, let the \emph{entropy} of a function $f: \Omega \to \mathds{R}_{\geq 0}$ as
\begin{align*}
  \Ent[\mu]{f} &\triangleq \EE[\mu]{f \log f} - \EE[\mu]{f} \log \EE[\mu]{f},
\end{align*}
where we use the convention that $0 \log 0 = 0$.
Note that when $\EE[\mu]{f} = 1$, then $\Ent[\mu]{f}$ is exactly the \emph{relative entropy} (a.k.a. \emph{KL-divergence}) of $\nu(\cdot)=\mu(\cdot)f(\cdot)$ and $\mu(\cdot)$. Formally
\begin{align*}
\Ent[\mu]{f} = \KL{\nu}{\mu} \triangleq \sum_{\sigma \in \Omega}\nu(\sigma)\log \frac{\nu(\sigma)}{\mu(\sigma)}, \quad \text{where } \nu(\sigma) = \mu(\sigma)f(\sigma).	
\end{align*}
For any functions $\hat{f},\hat{g}$ over $\hat{\Omega}$ such that $\hat{\Omega} \supseteq \Omega$, we simply use $\+E_P(\hat{f},\hat{g})$ and $\Ent[\mu]{\hat{f}}$ to denote $\+E_P(\hat{f}_\Omega,\hat{g}_\Omega)$ and $\Ent[\mu]{\hat{f}_\Omega}$, where $\hat{f}_\Omega$ and $\hat{g}_\Omega$ are obtained by restricting $\hat{f}$ and $\hat{g}$ on $\Omega$.
The \emph{modified log-Sobolev constant} introduced in~\cite{bobkov2006modified}  is defined by:
\begin{align*}
  \rho(P) &\triangleq \inf \left\{ \left.\frac{\+E_{P}(f, \log f)}{\Ent[\mu]{f}} \;\right\vert\; f:\Omega(\mu) \to \mathds{R}_{\geq 0},\; \Ent[\mu]{f} \not= 0 \right\}.
\end{align*}
If $P$ is reversible and all the eigenvalues of $P$ are non-negative, then by \cite[Corollary 2.8]{bobkov2006modified} and \cite[Corollary 2.2, (ii)]{diaconis1996log}, it holds that
\begin{align}\label{eq:MLS-mixing}
  t_{\mathrm{mix}}(P, \epsilon) &\leq \frac{1}{\rho(P)} \tp{\log \log \frac{1}{\mu_{\min}} + \log \frac{1}{2\epsilon^2}},
\end{align}
where $\mu_{\min} = \min_{\sigma \in \Omega}\mu(\sigma)$ denotes the minimum probability in $\mu$.
Specifically, \cite{bobkov2006modified} gives the mixing time for the continuous time variant of $P$, and \cite{diaconis1996log} shows that when $P$ is reversible and all the eigenvalues of $P$ are non-negative, then the mixing time of $P$ could be bounded by the mixing time of its continuous time variant.


%
%
\subsection{Fractional log-concavity and correlation matrix}
Let $V$ be a finite set.
Without loss of generality, we assume that $V = [n] = \{1,2,\ldots,n\}$.
%
%
Let $\mu$ be a distribution over $\{\0,\1\}^V$.
Equivalently, we can view $\mu$ as a distribution over the subsets of $[n]$ such that
for any \emph{configuration} $\sigma \in \{\0,\1\}^{[n]}$,
\begin{align}
\label{eq-distribution-equal}
\mu(\sigma) = \mu(S_\sigma), \quad\text{where } S_\sigma = \{i \in [n] \mid \sigma_i = \1\}.
\end{align}
For any $i \in [n]$, we may use $i$ to denote event $i \in S$, and $\overline{i}$ to denote event $i \notin S$.
The \emph{signed influence matrix} for $\mu$ in \Cref{definition-inf-matrix} can be rewritten as follows
\begin{align*}
\forall i,j \in [n], \quad \Ima_{\mu}(i,j) = \begin{cases}
	\Pr[\mu]{j \mid i} - \Pr[\mu]{j \mid \overline{i}} &\text{if } i \neq j \land \Pr[\mu]{i} > 0 \land \Pr[\mu]{\overline{i}} > 0;\\
 	0 &\text{otherwise.}
 \end{cases}
\end{align*}	
Let $\*\lambda = (\lambda_i)_{i \in [n]} \in \mathds{R}_{>0}$ be local fields.
Let $\mu^{(\*\lambda)}$ denote the distribution obtained from imposing the local fields $\*\lambda$ onto $\mu$, which can also be viewed as a distribution over subsets of $[n]$.
Formally, 
\begin{align*}
\forall S \subseteq [n],\quad 
\mu^{(\*\lambda)}(S) \propto \mu(S) \prod_{i \in S}\lambda_i.	
\end{align*}


%
The \emph{generating polynomial} associated to $\mu$ is defined by
\begin{align*}
g_\mu (z_1,z_2,\ldots,z_n) \triangleq \sum_{S \subseteq [n]}\mu(S)\prod_{i \in S}z_i.	
\end{align*}
\begin{definition}[fractional log-concavity\text{\cite{AASV21}}]
Let $\alpha \in [0,1]$.
A distribution 	$\mu: 2^{[n]} \to \mathds{R}_{\geq 0}$ is said to be $\alpha$-fractionally log-concave ($\alpha$-FLC) if $\log g_{\mu}(z_1^\alpha,z_2^\alpha,\ldots,z_n^\alpha)$ is concave, viewed as a function on $\mathds{R}_{\geq 0}^n$.
\end{definition}

The notion of fractional log-concavity is closely related to the following \emph{correlation matrix}.
\begin{definition}[correlation matrix]
\label{definition-cor-matrix}
Let $\mu:2^{[n]} \to \mathds{R}_{\geq 0}$ be a distribution. 
The correlation matrix $\Cma_{\mu}: [n]\times [n] \to \mathds{R}$ is defined by
\begin{align*}
\forall i,j \in [n], \quad \Cma_{\mu}(i,j) = \begin{cases}
 	1 - \Pr[\mu]{i} &\text{if } i = j;\\
 	\Pr[\mu]{j \mid i} - \Pr[\mu]{j} &\text{if } i \neq j \land \Pr[\mu]{i} > 0;\\
 	0 & \text{if } i \neq j \land \Pr[\mu]{i} = 0.
 \end{cases}
\end{align*}

\end{definition}

\begin{proposition}[\text{\cite{AASV21}}]
\label{proposition-cor-FLC}
Let $\mu:2^{[n]} \to \mathds{R}_{\geq 0}$ be a distribution and $\alpha \in (0,1]$.
$\mu$ is $\alpha$-fractionally log-concave if and only if $\lambda_{\max}(\Cma_{\mu^{(\lambda)}}) \leq \frac{1}{\alpha}$ for any 	positive local fields $\*\lambda = (\lambda_i)_{i \in [n]} \in \mathds{R}_{>0}$.
\end{proposition}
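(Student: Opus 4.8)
The plan is to express the Hessian of the log–generating polynomial (after the $\alpha$-substitution) in terms of the covariance matrix of $\mu^{(\*\lambda)}$ and then recognize the correlation matrix $\Cma_{\mu^{(\*\lambda)}}$ from \Cref{definition-cor-matrix}. First, two harmless reductions. If $\Pr[\mu]{i}=0$ for some $i\in[n]$, then $\mu(S)=0$ for every $S\ni i$, so $g_\mu$ does not involve $z_i$ and the $i$-th row and column of $\Cma_{\mu^{(\*\lambda)}}$ vanish; since $1/\alpha\ge 1>0$, deleting the coordinate $i$ changes neither the concavity of $\log g_\mu(z_1^\alpha,\dots,z_n^\alpha)$ nor the truth of ``$\lambda_{\max}(\Cma_{\mu^{(\*\lambda)}})\le 1/\alpha$ for all $\*\lambda$'', so we may assume $\Pr[\mu]{i}>0$ for all $i$. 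Then $g_\mu$ is a nonzero multilinear polynomial with nonnegative coefficients, hence strictly positive on $\mathds{R}_{>0}^n$, so $h(z)\triangleq\log g_\mu(z_1^\alpha,\dots,z_n^\alpha)$ is $C^\infty$ on the open orthant $\mathds{R}_{>0}^n$; moreover $h$ extends to a continuous $[-\infty,\infty)$-valued function on $\mathds{R}_{\ge 0}^n$, and a standard fact then gives that $h$ is concave on $\mathds{R}_{\ge 0}^n$ iff it is concave on $\mathds{R}_{>0}^n$, iff $\nabla^2 h(z)\preceq 0$ for every $z\in\mathds{R}_{>0}^n$.

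The core step is to compute $\nabla^2 h$ in the ``field coordinates'' $w_i=z_i^\alpha$, using multilinearity of $g_\mu$. For $\*\lambda=w\in\mathds{R}_{>0}^n$ (cf.\ \Cref{definition-local-fields}), write $p_i(w)\triangleq\Pr[\mu^{(w)}]{i}$ and $\mathrm{Cov}_w(i,j)\triangleq\Pr[\mu^{(w)}]{\{i,j\}\subseteq S}-p_i(w)p_j(w)$, with the convention $\{i,i\}\subseteq S$ meaning $i\in S$ (so $\mathrm{Cov}_w(i,i)=p_i(w)(1-p_i(w))$). The elementary identities $\partial_{w_i}\log g_\mu=p_i/w_i$, $\ w_j\,\partial_{w_j}p_i=\mathrm{Cov}_w(i,j)$, and $\partial_{w_i}^2 g_\mu\equiv 0$ hold; combining them with the chain rule through $z_i\mapsto z_i^\alpha$ gives
\begin{align*}
\partial_{z_i}h=\frac{\alpha}{z_i}\,p_i(w),\qquad
\partial_{z_j}\partial_{z_i}h=\frac{\alpha^2}{z_iz_j}\,\mathrm{Cov}_w(i,j)\ \ (i\ne j),\qquad
\partial_{z_i}^2 h=\frac{1}{z_i^2}\bigl(\alpha^2\,\mathrm{Cov}_w(i,i)-\alpha\,p_i(w)\bigr),
\end{align*}
so that, with $D=\mathrm{diag}(z_1,\dots,z_n)$, $\Sigma_w=\bigl(\mathrm{Cov}_w(i,j)\bigr)_{i,j}$ the covariance matrix, and $Q_w=\mathrm{diag}(p_1(w),\dots,p_n(w))\succ 0$,
\begin{align*}
\nabla^2 h(z)=D^{-1}\bigl(\alpha^2\Sigma_w-\alpha Q_w\bigr)D^{-1}.
\end{align*}

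Since $D\succ 0$ and, as $z$ ranges over $\mathds{R}_{>0}^n$, the point $w=(z_1^\alpha,\dots,z_n^\alpha)$ ranges over all of $\mathds{R}_{>0}^n$, the concavity criterion becomes: for every $\*\lambda\in\mathds{R}_{>0}^n$, $\alpha^2\Sigma_{\*\lambda}-\alpha Q_{\*\lambda}\preceq 0$, equivalently $\alpha\,Q_{\*\lambda}^{-1/2}\Sigma_{\*\lambda}Q_{\*\lambda}^{-1/2}\preceq I$, equivalently $\lambda_{\max}\bigl(Q_{\*\lambda}^{-1/2}\Sigma_{\*\lambda}Q_{\*\lambda}^{-1/2}\bigr)\le 1/\alpha$. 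Finally, \Cref{definition-cor-matrix} shows $\Cma_{\mu^{(\*\lambda)}}=Q_{\*\lambda}^{-1}\Sigma_{\*\lambda}$ (the diagonal entry being $\mathrm{Cov}_{\*\lambda}(i,i)/p_i=1-p_i$, matching the definition), and $Q_{\*\lambda}^{-1}\Sigma_{\*\lambda}$ is similar to the symmetric matrix $Q_{\*\lambda}^{-1/2}\Sigma_{\*\lambda}Q_{\*\lambda}^{-1/2}$ via conjugation by $Q_{\*\lambda}^{1/2}$, hence has the same (real) eigenvalues; therefore $\lambda_{\max}(\Cma_{\mu^{(\*\lambda)}})=\lambda_{\max}\bigl(Q_{\*\lambda}^{-1/2}\Sigma_{\*\lambda}Q_{\*\lambda}^{-1/2}\bigr)$. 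Chaining the equivalences yields the proposition.

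I expect the only genuine work to be the Hessian computation in the second paragraph: one must push the factor $\alpha z_i^{\alpha-1}$ (and, on the diagonal, also the $\alpha(\alpha-1)z_i^{\alpha-2}$ term coming from differentiating $z_i^\alpha$ twice) through the generating-polynomial identities, and use multilinearity to discard the $\partial_{w_i}^2 g_\mu$ terms, with the sign bookkeeping on the diagonal producing exactly the $-\alpha Q_w$ correction to $\alpha^2\Sigma_w$. The reductions to positive marginals and to the open orthant, and the similarity argument identifying the spectra of $\Cma_{\mu^{(\*\lambda)}}$ and $Q_{\*\lambda}^{-1/2}\Sigma_{\*\lambda}Q_{\*\lambda}^{-1/2}$, are routine.
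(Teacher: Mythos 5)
Your proof is correct, and it is essentially the standard Hessian-of-the-log-generating-polynomial computation from \cite{AASV21} (Lemma~69 and Remark~70 of the full version), which is exactly what the paper cites in lieu of giving its own proof. One cosmetic slip in your reduction: when $\Pr[\mu]{i}=0$ the $i$-th row and column of $\Cma_{\mu^{(\*\lambda)}}$ do not vanish, since the diagonal entry is $1-\Pr[\mu]{i}=1$; but this only contributes an isolated eigenvalue $1\le 1/\alpha$, so deleting the coordinate still preserves the equivalence and the rest of your argument goes through unchanged.
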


For the proof of \Cref{proposition-cor-FLC}, readers can refer to the proof of Lemma~69 and Remark~70 in the full version of~\cite{AASV21}.

\subsection{Homogeneous distributions and random walks}
Let $\mu:\binom{[n]}{k} \to \mathds{R}_{\geq 0}$ be a distribution over the size-$k$ subsets of $[n]$. We call such distributions the \emph{homogeneous distributions}.

Let $\Omega \subseteq \binom{[n]}{k}$ denote the support of $\mu$.
Let $X$ be the downward closure of $\Omega$.  
Formally, $X$ is the smallest family such that $\Omega \subseteq X$ and if $\alpha \in X$ then $\beta \in X$ for all $\beta \subseteq \alpha$.
In other words, $X$ is the \emph{simplicial complexes} generated by $\mu$.
For any \emph{face} $\alpha \in X$, let $\abs{\alpha}$ denote the \emph{dimension} of $\alpha$.
For any integer $0 \leq j \leq  k$, let $X(j)$ denote all the faces in $X$ with dimension $j$.
\begin{definition}[down/up walk]
\label{definition-DUW}
Let $X$ be the simplicial complexes generated by a homogeneous distribution $\mu:\binom{[n]}{k} \to \mathds{R}_{\geq 0}$.
Let $0 \leq j < k$ be an integer.
\begin{itemize}
\item The down walk $D_{k \to j}: X(k) \times X(j) \to \mathds{R}_{\geq 0}$ is defined by
\begin{align*}
\forall \alpha \in X(k),\beta \in X(j),\quad D_{k \to j}(\alpha,\beta) = \begin{cases}
 	\frac{1}{\binom{k}{j}} &\text{if } \beta \subseteq \alpha;\\
 	0 &\text{otherwise.}
 \end{cases}
\end{align*}
\item The up walk $U_{j \to k}: X(j) \times X(k) \to \mathds{R}_{\geq 0}$ is defined by
\begin{align*}
\forall \alpha \in X(j),\beta \in X(k),\quad U_{j \to k}(\alpha,\beta) = \begin{cases}
 	\frac{\mu(\beta)}{\sum_{\gamma \in X(k): \alpha \subseteq \gamma}\mu(\gamma)} &\text{if } \alpha \subseteq \beta;\\
 	0 &\text{otherwise.}
 \end{cases}
\end{align*}
\end{itemize}
\end{definition}

The following relative entropy decay result is proved in~\cite{anari2021entropic}.

\begin{theorem}[\text{\cite[Theorem~4]{anari2021entropic}}]
\label{theorem-AJK+}
Suppose $\mu:\binom{[n]}{k} \to \mathds{R}_{\geq 0}$ is $\alpha$-fractionally log-concave for some $\alpha \in (0,1]$. 
Let $\Omega$ denote the support of $\mu$.
For any integer $0 \leq j \leq k - \lceil 1/\alpha \rceil$, and any distribution $\nu$ over $\Omega$,
\begin{align*}
\KL{\nu D_{k \rightarrow j}}{ \mu D_{k \rightarrow j} }	\leq \tp{1-\kappa\tp{j,k,\frac{1}{\alpha}}}\KL{\nu}{\mu},
\end{align*}
where
\begin{align}
\label{eq-def-kappa}
\kappa\tp{j,k,c} \triangleq \frac{(k+1-j-c)^{c- \ctp{c} } \prod_{i=0}^{\ctp{c}-1}(k-j - i) }{(k+1)^{c}}.
\end{align}
\end{theorem}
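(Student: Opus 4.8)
The plan is to derive this from \emph{entropic independence}, in the style of~\cite{AASV21,anari2021entropic}: first promote $\alpha$-fractional log-concavity to an entropy-level one-step contraction at \emph{every} link of $\mu$, and then splice these one-step estimates together along the down walk by a telescoped chain rule for relative entropy. Throughout write $c = 1/\alpha$ and, for a face $\tau$ of the simplicial complex generated by $\mu$, let $\mu_\tau$ denote the link of $\mu$ at $\tau$ (the law of $S\setminus\tau$ conditioned on $\tau\subseteq S$, a homogeneous distribution of rank $k-|\tau|$), and similarly $\nu_\tau$.

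\emph{Step 1 (entropic independence of all links).} Fractional log-concavity is a closure property that passes to links: if $\mu$ is $\alpha$-FLC then so is every $\mu_\tau$, since conditioning on $\tau\subseteq S$ is a limit of positive external fields followed by a coordinate restriction, both of which preserve concavity of $\log g_\mu(z_1^\alpha,\dots,z_n^\alpha)$, and by \Cref{proposition-cor-FLC} all correlation matrices of $\mu_\tau$ still have spectral radius $\le 1/\alpha$. The nontrivial analytic input, which I would import from~\cite{AASV21,anari2021entropic} (or reprove by integrating the log-concave function $z\mapsto\log g_\mu(z_1^\alpha,\dots,z_n^\alpha)$ along a path of external fields), is that an $\alpha$-FLC homogeneous distribution of rank $r$ is $\frac{1}{\alpha}$-\emph{entropically independent}: for any distribution $\theta$ on its support, $\KL{\theta D_{r\to 1}}{\mu D_{r\to 1}}\le\frac{1}{\alpha r}\KL{\theta}{\mu}$. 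In particular every link $\mu_\tau$ satisfies this with $r = k-|\tau|$.

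\emph{Step 2 (telescoped chain rule).} For $0\le i\le k$, set $\phi_i\triangleq\KL{\nu D_{k\to i}}{\mu D_{k\to i}}$, so $\phi_0 = 0$, $\phi_k = \KL{\nu}{\mu}$, and $(\phi_i)$ is nondecreasing by the data processing inequality. Running $D_{k\to i}$ from $\nu$ and from $\mu$ and applying the chain rule to the pair (chosen size-$i$ face $\gamma$, original set $S$) — whose conditional law of $S$ given $\gamma$ is precisely a copy of the link, while the reverse order of the chain rule contributes nothing since $\gamma$ given $S$ is uniform on $\binom{S}{i}$ under both measures — gives
\[ \phi_k-\phi_i=\E[\gamma\sim\nu D_{k\to i}]{\KL{\nu_\gamma}{\mu_\gamma}}. \]
Applying this identity once more \emph{inside} each link at level one, and using that the down and link operators commute (so that for $|\gamma| = i$ the relevant pieces of $\mu D_{k\to i+1}$ are the $1$-marginals $(\mu_\gamma)D_{k-i\to 1}$, and drawing $\gamma\sim\nu D_{k\to i}$ then $x\sim(\nu_\gamma)D_{k-i\to 1}$ yields $\gamma\cup\{x\}\sim\nu D_{k\to i+1}$), we likewise obtain
\[ \phi_{i+1}-\phi_i=\E[\gamma\sim\nu D_{k\to i}]{\KL{(\nu_\gamma)D_{k-i\to 1}}{(\mu_\gamma)D_{k-i\to 1}}}. \]

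\emph{Step 3 (apply entropic independence and solve the recursion).} By Step 1, for $|\gamma| = i$ the link $\mu_\gamma$ has rank $k-i$ and is $\frac{1}{\alpha}$-entropically independent, hence $\KL{(\nu_\gamma)D_{k-i\to 1}}{(\mu_\gamma)D_{k-i\to 1}}\le\frac{c}{k-i}\KL{\nu_\gamma}{\mu_\gamma}$; averaging over $\gamma\sim\nu D_{k\to i}$ and inserting the two identities of Step 2 yields $\phi_{i+1}-\phi_i\le\frac{c}{k-i}(\phi_k-\phi_i)$, i.e.\ $\phi_k-\phi_{i+1}\ge(1-\frac{c}{k-i})(\phi_k-\phi_i)$ (also trivially true when $k-i\le c$). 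Iterating over $i = 0,1,\dots,j-1$ — every factor is positive because $j\le k-\lceil 1/\alpha\rceil$ forces $k-i\ge\lceil c\rceil+1>c$ on this range — gives
\[ \phi_k-\phi_j\ \ge\ \phi_k\prod_{i=0}^{j-1}\frac{k-i-c}{k-i}, \]
so $\KL{\nu D_{k\to j}}{\mu D_{k\to j}}\le\bigl(1-\prod_{i=0}^{j-1}\frac{k-i-c}{k-i}\bigr)\KL{\nu}{\mu}$. It then remains to verify $\prod_{i=0}^{j-1}\frac{k-i-c}{k-i}\ge\kappa\bigl(j,k,1/\alpha\bigr)$ for $\kappa$ as in~\eqref{eq-def-kappa}; this is an elementary estimate, obtained by keeping the $\lceil c\rceil$ smallest factors and bounding the remaining ones from below by replacing their denominators with $k+1$, which is exactly what produces the fractional exponent $c-\lceil c\rceil$ in~\eqref{eq-def-kappa}.

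\emph{Main obstacle.} The real work is Step 1. \Cref{proposition-cor-FLC} only delivers a bound on the correlation (hence the signed-influence) matrix, which is a second-order condition and by itself gives only a Poincar\'{e}/variance-type contraction; entropic independence is an inequality at the level of relative entropy, and deriving it from $\alpha$-FLC must use log-concavity of the \emph{entire} polynomial $g_\mu(z_1^\alpha,\dots,z_n^\alpha)$ rather than its Hessian at a single point — e.g.\ by integrating that log-concave function against a suitable family of tilts and tracking the resulting entropy functional. Steps 2--3 are bookkeeping once the identities are in place; the only points needing care there are the commutation of the link and down operators and pinning down the elementary $\kappa$-estimate near the boundary rank $\lceil 1/\alpha\rceil$.
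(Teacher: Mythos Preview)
This theorem is not proved in the paper --- it is quoted from \cite[Theorem~4]{anari2021entropic} as a black box --- and your proposal correctly reconstructs the argument of that reference: $\alpha$-fractional log-concavity is inherited by links and yields $(1/\alpha)$-entropic independence at every link (the genuine analytic input, as you rightly flag), after which the chain-rule identities of Step~2 and the recursion of Step~3 telescope to the product $P=\prod_{i=0}^{j-1}(k-i-c)/(k-i)$. This is essentially the proof in \cite{anari2021entropic}.

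One minor wrinkle: your hint for the final ``elementary estimate'' $P\ge\kappa(j,k,c)$ is imprecise --- the numerator of $\kappa$ in~\eqref{eq-def-kappa} involves $k-j,\,k-j-1,\dots,k-j-\lceil c\rceil+1$, which are not the numerators $(k-i-c)$ that appear in $P$, so ``keeping the $\lceil c\rceil$ smallest factors and replacing remaining denominators by $k+1$'' does not directly produce that form. The inequality is nonetheless true (for integer $c$ one computes $P/\kappa=(k+1)^c/[k(k-1)\cdots(k-c+1)]\ge 1$, and the non-integer case is similar), so this is bookkeeping rather than a gap.
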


Let $\mu_{(k)} = \mu$. For any $0 \leq j < k$, let $\mu_{(j)} = \mu_{(k)} D_{k\to j}$.
Let $f^{(k)}: X(k) \to \mathds{R}_{\geq 0}$. 
For any $0 \leq j < k$, define the function $f^{(j)}: X(j) \to \mathds{R}_{\geq 0}$ by $f^{(j)} = U_{j \to k}f^{(k)}$. 
We have the following lemma.

\begin{lemma}
\label{lemma-local-etp-decay}
Let  $\mu=\mu_{(k)}:\binom{[n]}{k} \to \mathds{R}_{\geq 0}$, $0 \leq j < k$ and $\kappa \in (0,1)$. Let $\Omega$ denote the support of $\mu$.
Suppose for any distribution $\nu$ over $\Omega$, $\KL{\nu D_{k \rightarrow j}}{ \mu D_{k \rightarrow j} }	\leq (1-\kappa)\KL{\nu}{\mu}$.
For any function $f^{(k)}: X(k) \to \mathds{R}_{\geq 0}$,
\begin{align*}
\Ent[\mu_{(j)}]{f^{(j)}} \leq (1-\kappa)	\Ent[\mu_{(k)}]{f^{(k)}}.
\end{align*}
\end{lemma}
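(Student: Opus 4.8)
The plan is to translate the claimed inequality on entropy into the known contraction for relative entropy along the down walk, by specializing to distributions of the form $\nu = f^{(k)}\mu_{(k)}$ (suitably normalized). First I would normalize: it suffices to prove the inequality when $\E[\mu_{(k)}]{f^{(k)}} = 1$, since both sides of the claimed inequality are $1$-homogeneous in $f^{(k)}$ under the substitution $f^{(k)} \mapsto c f^{(k)}$ (entropy scales linearly in $c$ after dividing out, and $f^{(j)} = U_{j\to k} f^{(k)}$ scales the same way; more carefully, $\Ent[\mu]{cf} = c\,\Ent[\mu]{f}$, so the inequality is invariant under positive scaling). Under this normalization, define $\nu \triangleq f^{(k)}\mu_{(k)}$, a genuine probability distribution on $X(k) = \Omega$, so that $\Ent[\mu_{(k)}]{f^{(k)}} = \KL{\nu}{\mu_{(k)}}$.

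The key step is then to identify $\Ent[\mu_{(j)}]{f^{(j)}}$ with $\KL{\nu D_{k\to j}}{\mu_{(j)}}$. For this I would verify the two identities $(\nu D_{k\to j})(\beta) = f^{(j)}(\beta)\,\mu_{(j)}(\beta)$ for every $\beta \in X(j)$, and $\E[\mu_{(j)}]{f^{(j)}} = 1$. The first is a direct computation: $(\nu D_{k\to j})(\beta) = \sum_{\alpha \in X(k)} \nu(\alpha) D_{k\to j}(\alpha,\beta) = \sum_{\alpha \supseteq \beta} f^{(k)}(\alpha)\mu_{(k)}(\alpha)\binom{k}{j}^{-1}$, and this should be matched against $f^{(j)}(\beta)\mu_{(j)}(\beta)$ using the definitions $\mu_{(j)}(\beta) = \sum_{\alpha\supseteq\beta}\mu_{(k)}(\alpha)\binom{k}{j}^{-1}$ and $f^{(j)}(\beta) = (U_{j\to k}f^{(k)})(\beta) = \sum_{\alpha\supseteq\beta} \frac{\mu_{(k)}(\alpha)}{\mu_{(j)}(\beta)\binom{k}{j}} f^{(k)}(\alpha)$ — so that the product $f^{(j)}(\beta)\mu_{(j)}(\beta)$ telescopes exactly to the desired sum. (Here I am using that the up walk $U_{j\to k}$ and the down walk $D_{k\to j}$ are reversible with respect to $\mu_{(k)}$, $\mu_{(j)}$ respectively, i.e.\ $\mu_{(j)}(\beta) U_{j\to k}(\beta,\alpha) = \mu_{(k)}(\alpha) D_{k\to j}(\alpha,\beta)$ after renormalizing the down-walk weight.) The second identity, $\E[\mu_{(j)}]{f^{(j)}} = 1$, follows because $\mu_{(j)} U_{j\to k}^{\mathsf{T}}$-averaging preserves total mass, or simply by summing the first identity over $\beta$ and using $\sum_\beta (\nu D_{k\to j})(\beta) = 1$.

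With these two identities in hand, the claimed inequality is immediate: by hypothesis, $\KL{\nu D_{k\to j}}{\mu_{(j)}} = \KL{\nu D_{k\to j}}{\mu D_{k\to j}} \leq (1-\kappa)\KL{\nu}{\mu}$, and rewriting both KL divergences as entropies via the identifications above gives $\Ent[\mu_{(j)}]{f^{(j)}} \leq (1-\kappa)\Ent[\mu_{(k)}]{f^{(k)}}$. Finally I would handle the degenerate cases: if $\Ent[\mu_{(k)}]{f^{(k)}} = 0$ then $f^{(k)}$ is $\mu_{(k)}$-a.e.\ constant, whence $f^{(j)}$ is $\mu_{(j)}$-a.e.\ that same constant and the left side is also $0$; and if $f^{(k)}$ has $\mu_{(k)}$-expectation zero then it vanishes $\mu_{(k)}$-a.e.\ and both sides are $0$. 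I expect the main obstacle to be the bookkeeping in the identity $(\nu D_{k\to j})(\beta) = f^{(j)}(\beta)\mu_{(j)}(\beta)$ — getting the normalization constants $\binom{k}{j}$ and the conditional-probability denominators to cancel correctly — rather than anything conceptually deep; this is essentially the standard statement that the down walk intertwines the Dirichlet-form/entropy functionals, packaged for the entropy functional specifically.
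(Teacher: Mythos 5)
Your proposal is correct and follows essentially the same route as the paper's proof: normalize to $\E[\mu_{(k)}]{f^{(k)}}=1$, set $\nu = f^{(k)}\mu_{(k)}$, identify $\Ent[\mu_{(k)}]{f^{(k)}}=\KL{\nu}{\mu}$ and $\Ent[\mu_{(j)}]{f^{(j)}}=\KL{\nu D_{k\to j}}{\mu D_{k\to j}}$ via the identity $f^{(j)}=\nu_{(j)}/\mu_{(j)}$, and apply the hypothesis. The bookkeeping with the $\binom{k}{j}$ normalizations that you flag as the main obstacle works out exactly as you describe.
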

\begin{proof}
First note that if $f^{(k)} = 0$, then the lemma holds trivially.
We next prove that we only need to consider the function $f^{(k)}$ with $\E[\mu_{(k)}]{f^{(k)}} = 1$. 
For any $g: X(k) \to \mathds{R}_{\geq 0}$ and any $c > 0$, it holds that
$\Ent[\mu_{(k)}]{cg} = c\Ent[\mu_{(k)}]{g}$ and $	\Ent[\mu_{(j)}]{(cg)^{(j)}} = c \Ent[\mu_{(j)}]{g^{(j)}}$
where $g^{(j)} = U_{j \to k}g$ and $(cg)^{(j)} = U_{j \to k}(cg)$ for $j < k$. 
Suppose $f^{(k)} \neq 0$ and $t = \E[\mu_{(k)}]{f^{(k)}} > 0$. The lemma holds for $f^{(k)}$ if and only if the lemma holds for $f' = f^{(k)}/t$ and $\E[\mu_{(k)}]{f'} = 1$.

Assume  $\E[\mu_{(k)}]{f^{(k)}} =1$. Define a distribution $\nu$ over $X(k)$ by $\nu(\sigma) = \mu(\sigma)f^{(k)}(\sigma)$ for all $\sigma \in X(k)$, then
\begin{align*}
\KL{\nu}{\mu}  = \sum_{\sigma \in X(k)}\mu(\sigma)\frac{\nu(\sigma)}{\mu(\sigma)} \log \frac{\nu(\sigma)}{\mu(\sigma)} = \EE[\mu]{f^{(k)} \log f^{(k)}} \overset{(\ast)}{=}  \Ent[\mu_{(k)}]{f^{(k)}},
\end{align*}
where $(\ast)$ holds because $\E[\mu]{f^{(k)}} = 1$ and $\mu=\mu_{(k)}$. 
Let $\nu_{(j)} = \nu D_{k\to j}$ and $\mu_{(j)} = \mu D_{k \to j}$.
We have
\begin{align*}
\KL{\nu D_{k \rightarrow j}}{ \mu D_{k \rightarrow j} }	= \sum_{\sigma \in X(j)}	\mu_{(j)}(\sigma)\frac{\nu_{(j)}(\sigma)}{\mu_{(j)}(\sigma)}\log\frac{\nu_{(j)}(\sigma)}{\mu_{(j)}(\sigma)} = \Ent[\mu_{(j)}]{f^{(j)}},
\end{align*}
where the last equation holds because $f^{(j)}(\sigma) = U_{j \to k}f(\sigma) =  \sum_{\alpha \in X(k): \sigma \subseteq \alpha}U_{j \to k}(\sigma,\alpha)\frac{\nu(\alpha)}{\mu(\alpha)} =\frac{\nu_{(j)}(\sigma)}{\mu_{(j)}(\sigma)}$ and $\E[\mu_{(j)}]{f^{(j)}} = \sum_{\sigma \in X(j)}\mu_{(j)}(\sigma)\frac{\nu_{(j)}(\sigma)}{\mu_{(j)}(\sigma)} = 1$.
This proves the lemma.
\end{proof}

\subsection{Multivariate hypergeometric distribution}
Let $V$ be a set of $n$ buckets, each of them has $k$ balls.
Suppose we pick $\ell$ balls from all $kn$ balls uniformly at random, without replacement.
For each bucket $v \in V$, let $a_v \in \mathbb{Z}_{\geq 0}$ denote the number of balls picked from the bucket $v$,
then $\*a = (a_v)_{v \in V}$ follows multivariate hypergeometric distribution.

Formally, given a set $V$ of size $n$, an integer $k \geq 1$ and an integer $0\leq \ell \leq kn$, the multivariate hypergeometric distribution $\HyperGeo$ is defined as follows.
The support of  $\HyperGeo$ is defined by
\begin{align}
\label{eq-support-hypergeo}
  \Omega(\HyperGeo) \triangleq \left\{\boldsymbol{a} = (a_v)_{v \in V} \mid  \sum_{v \in V} a_v = \ell \mbox{ and } \forall v \in V , a_v \in \mathbb{Z}_{\geq 0} \right\},
\end{align}
For any $\boldsymbol{a} \in \Omega(\HyperGeo)$, it holds that
\begin{align}
\label{eq-hypergeo}
  \HyperGeo (\boldsymbol{a}) = \frac{\prod_{v \in V} \binom{k}{a_v}}{\binom{kn}{\ell}}.
\end{align}

\begin{lemma}[\cite{joag-dev1983} and \cite{dubhashi1998balls}]\label{lemma:hypergometric-concentration}
Let $\*a \sim  \HyperGeo$. 
For any $v \in V$ and $\epsilon \in (0,1)$, it holds that
\begin{align*}
  \Pr{\left\vert \frac{a_v}{k} - \frac{\ell}{kn} \right\vert \geq \epsilon} &\leq 2\exp\tp{-2\epsilon^2 k}.
\end{align*}
\end{lemma}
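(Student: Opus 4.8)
The plan is to reduce the multivariate statement to a univariate hypergeometric tail bound via negative association. Index the $kn$ balls and let $Y_j\in\{\0,\1\}$... — more carefully, let $Y_j\in\{0,1\}$ be the indicator that ball $j$ is among the $\ell$ selected. By construction $\sum_{j}Y_j=\ell$ holds deterministically, and by the exchangeability of the uniformly random $\ell$-subset we have $\Pr{Y_j=1}=\ell/(kn)$ for every $j$. Writing $S_v$ for the set of $k$ balls lying in bucket $v$, the quantity of interest is $a_v=\sum_{j\in S_v}Y_j$, a sum of $k$ Bernoulli variables each with mean $\ell/(kn)$; hence $\E{a_v}=k\cdot\ell/(kn)=\ell/n$, and the event in the lemma is exactly $\tp{\,\abs{a_v-\ell/n}\ge k\epsilon\,}$.

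The structural input, which is precisely the content of the cited works~\cite{joag-dev1983,dubhashi1998balls}, is that the selection indicators $(Y_j)_j$ produced by sampling a fixed-size subset without replacement are \emph{negatively associated}. Negative association is closed under taking sums over disjoint blocks of indices, and it yields the moment-generating-function domination $\E{\exp\tp{t\sum_{j\in S_v}Y_j}}\le\prod_{j\in S_v}\E{\exp(t Y_j)}$ for every real $t$. Therefore the Chernoff--Hoeffding method applies to $a_v$ verbatim, exactly as if the $Y_j$ were independent $[0,1]$-valued summands with mean $\ell/(kn)$.

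It then remains only to invoke Hoeffding's inequality for bounded summands: for a sum of $k$ variables supported on $[0,1]$ with mean $\ell/n$, each one-sided tail is at most $\exp(-2\epsilon^2 k)$ at deviation $k\epsilon$, so a union bound over the two tails gives $\Pr{\abs{a_v/k-\ell/(kn)}\ge\epsilon}\le 2\exp(-2\epsilon^2 k)$, which is the claim. (Alternatively, one can skip negative association altogether and quote directly the Serfling--Hoeffding concentration bound for the univariate hypergeometric distribution, since $a_v$ is marginally hypergeometric with population $kn$, $k$ marked items, and $\ell$ draws; this yields the same inequality.)

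I do not anticipate a real obstacle here. The only point that needs care is citing the exact negative-association statement for the ``fixed-size sampling without replacement'' ensemble and recording that negative association transfers Hoeffding's two-sided bound without change; the remainder is a single substitution of $\E{a_v}=\ell/n$ and the scaling $t=k\epsilon$.
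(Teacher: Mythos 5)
Your proof is correct and follows essentially the same route as the paper: write $a_v$ as a sum of the $k$ selection indicators for the balls in bucket $v$, invoke negative association of the without-replacement sampling indicators from~\cite{joag-dev1983}, and apply the Chernoff--Hoeffding bound from~\cite{dubhashi1998balls} with deviation $k\epsilon$ to get $2\exp(-2\epsilon^2 k)$. The extra details you supply (the MGF domination under negative association and the explicit Hoeffding constants) are exactly what the paper's terser proof leaves implicit.
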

\begin{proof}
For each bucket $v \in V$, let $(v,1),(v,2),\ldots,(v,k)$ denote all balls in $v$.
For each ball $(v,i)$, let $X_{(v,i)}$ indicate whether the ball $(v,i)$ is picked.
It holds that $a_v = \sum_{i \in [k]}X_{(v,i)}$.
Since $(X_{(v,i)})_{i \in [k]}$ are negative associated~\cite[Lemma 2.11]{joag-dev1983}, the Chernoff-Hoeffding inequality \cite{dubhashi1998balls} can be applied to $a_v$.
\end{proof}

%


%

\section{Fractional Log-Concavity and Uniform Block Factorization}
\label{section-proof-AJK+}
In this section, we prove \Cref{corollary-muk-UBF}.
The lemma can be proved by the techniques in \cite{AASV21,anari2021entropic}. We conclude the proof here for completeness. 
We prove a general result based on spectral independence.

\begin{definition}[spectral independence with all fields]
\label{definition-SI-local}
Let $\eta >0$.
A distribution $\mu$ over $\{\0,\1\}^V$ is said to be $\eta$-\emph{spectrally independent with all fields} if 
$\lambda_{\max} (\Ima_{\mu^{(\*\lambda)}}) \leq \eta$ for any $\*\lambda = (\lambda_v)_{v \in V} \in \mathds{R}_{>0}^V$.
\end{definition}
The notion of spectral independence was first introduced in~\cite{anari2020spectral}, then further developed in~\cite{chen2021rapidcolor,feng2021rapid}.
Remark that if a distribution is $\eta$-spectrally independent in $\infty$-norm  with all fields, then it must be $\eta$-spectrally independent with all fields.
This is because $\lambda_{\max} (\Ima_{\mu^{(\*\lambda)}}) \leq \norm{\Ima_{\mu^{(\*\lambda)}}}_{\infty}$.

\begin{proposition}[\text{\cite{AASV21,anari2021entropic}}]
\label{proposition-general-SI-UBF}
Let $V= [n] =\{1,2,\ldots,n\}$ and $\eta > 0$.
For any distribution $\mu$ over $\{\0,\1\}^V$ that is $\eta$-spectrally independent with all fields, any integer $\ctp{\eta + 1} < \ell \leq n$, $\mu$ satisfies $\ell$-uniform block factorization of entropy with $C = 1/\kappa(n-\ell,n,\eta + 1)$, where $\kappa(\cdot)$ is defined in~\eqref{eq-def-kappa}.
\end{proposition}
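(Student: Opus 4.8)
The statement to prove is \Cref{proposition-general-SI-UBF}: spectral independence with all fields implies uniform block factorization of entropy with the explicit constant $C = 1/\kappa(n-\ell,n,\eta+1)$. The plan is to reduce this to the homogeneous setting via a standard homogenization trick, then apply the entropic-independence machinery stated in \Cref{theorem-AJK+} and \Cref{lemma-local-etp-decay}.

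First I would pass from $\mu$ on $\{\0,\1\}^V$ to a homogeneous distribution. Given $\mu$ over subsets of $[n]$, define $\hat\mu$ over $\binom{[2n]}{n}$ by $\hat\mu(S \cup \{n + i : i \notin S\}) = \mu(S)$ for each $S \subseteq [n]$; that is, each original element $i$ gets a ``shadow'' element $n+i$, and exactly one of $\{i, n+i\}$ is always selected. This is the standard lift used in~\cite{AASV21,anari2021entropic} to turn a $2$-spin distribution into a simplicial complex of rank $n$. The key point is that spectral independence of $\mu$ with all fields translates into fractional log-concavity of $\hat\mu$: one checks that imposing arbitrary positive fields on $\hat\mu$ and bounding $\lambda_{\max}$ of the correlation matrix $\Cma$ reduces, via \Cref{proposition-cor-FLC} and the relationship between the correlation matrix and the signed influence matrix for $2$-spin systems (the correlation matrix entries differ from influence matrix entries by a factor $\Pr{\overline i}$, and one accounts for the paired shadow coordinates), to the hypothesis $\lambda_{\max}(\Ima_{\mu^{(\*\lambda)}}) \le \eta$ for all fields $\*\lambda$. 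The upshot is that $\hat\mu$ is $\alpha$-FLC with $\frac{1}{\alpha} = \eta + 1$.

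Next I would run the down-walk entropy decay. Set $k = n$ (the rank of the complex generated by $\hat\mu$) and $j = n - \ell$. Since $\eta + 1 = 1/\alpha$ and $\ell > \ctp{\eta+1} \ge \ctp{1/\alpha}$, we have $j = n - \ell \le k - \ctp{1/\alpha}$, so \Cref{theorem-AJK+} applies and gives, for every distribution $\nu$ on the support,
\begin{align*}
\KL{\nu D_{k\to j}}{\mu D_{k\to j}} \le \bigl(1 - \kappa(n-\ell, n, \eta+1)\bigr)\KL{\nu}{\mu}.
\end{align*}
Feeding this into \Cref{lemma-local-etp-decay} yields $\Ent[\hat\mu_{(j)}]{f^{(j)}} \le (1-\kappa)\Ent[\hat\mu_{(k)}]{f^{(k)}}$ for every $f^{(k)} \ge 0$, where $\kappa = \kappa(n-\ell,n,\eta+1)$. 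The final step is to unwind the down-up decomposition into a block-factorization statement: writing $\Ent[\hat\mu]{f} - \Ent[\hat\mu_{(j)}]{f^{(j)}} \ge \kappa\,\Ent[\hat\mu]{f}$ and recognizing $\Ent[\hat\mu]{f} - \Ent[\hat\mu_{(j)}]{f^{(j)}}$ as the average over size-$\ell$ blocks of the conditional entropies (this is the standard identity $\Ent[\mu]{f} = \E{\Ent[S]{f}} + \Ent[\mu_{V\setminus S}]{\mu^{\cdot}(f)}$ averaged over $S$, interpreted through the down walk), gives $\Ent[\hat\mu]{f} \le \frac{1}{\kappa}\cdot\frac{1}{\binom{n}{\ell}}\sum_{S \in \binom{[n]}{\ell}}\hat\mu[\Ent[S]{f}]$. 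Finally I would transfer this back from $\hat\mu$ to $\mu$: because the shadow coordinates are deterministic functions of the original ones, blocks and conditional entropies correspond exactly, so the same inequality holds for $\mu$ with $C = 1/\kappa(n-\ell,n,\eta+1)$.

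\textbf{Main obstacle.} The routine-looking but genuinely delicate step is the homogenization bookkeeping: verifying that ``$\eta$-spectral independence with all fields for $\mu$'' is exactly what is needed to conclude ``$\hat\mu$ is $(\eta+1)$-FLC,'' including the precise $+1$ coming from the rank-$n$ lift, and checking that the down-walk on $\hat\mu$ from level $n$ to level $n-\ell$ really does restrict to averaging over size-$\ell$ subsets of the \emph{original} ground set rather than over size-$\ell$ subsets of the $2n$ lifted coordinates. One must also confirm that the constant $\kappa(n-\ell, n, \eta+1)$ produced by \Cref{theorem-AJK+} with parameters $(j,k,c) = (n-\ell, n, \eta+1)$ matches the claimed $C = 1/\kappa(n-\ell,n,\eta+1)$ verbatim, and that the hypothesis $\ctp{\eta+1} < \ell$ is exactly the condition $j \le k - \ctp{1/\alpha}$ needed to invoke that theorem. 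None of these are conceptually hard given the tools already in the excerpt, but they are where the proof must be careful.
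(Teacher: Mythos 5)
Your proposal follows essentially the same route as the paper's proof: homogenize $\mu$ to a rank-$n$ distribution on the doubled ground set, translate arbitrary fields on the lifted coordinates to arbitrary fields on the original coordinates via ratios so that $\eta$-spectral independence with all fields yields $\frac{1}{\eta+1}$-fractional log-concavity (the paper makes your correlation-matrix bookkeeping precise via \Cref{lemma-sp-AASV} and \Cref{proposition-cor-FLC}), then apply \Cref{theorem-AJK+} with \Cref{lemma-local-etp-decay} and convert the down-walk entropy contraction into block factorization via the identity you describe, which is exactly \Cref{lemma-UBF=ED}. The steps you flag as delicate are precisely the ones the paper handles with those two cited lemmas, so the plan is correct as written.
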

Now, we are ready to prove \Cref{corollary-muk-UBF}.

\begin{proof}[Proof of \Cref{corollary-muk-UBF}]
Consider the distribution $\mu_k$. Since $\mu_k$ is $\eta$-spectrally independent in $\infty$-norm with all fields, $\mu_k$ is $\hat{\eta}$-spectrally independent with all fields, where $\hat{\eta} = \ctp{\eta}$.
Let $\ell = \ctp{ \theta n k}$.
Since  $k > \frac{\eta + 2}{\theta n}$, it holds that $\ctp{\hat{\eta} + 1} <\ell \leq kn$.
By \Cref{proposition-general-SI-UBF}, $\mu_k$ satisfies $\ell$-uniform block factorization of entropy with $C = 1/\kappa(nk-\ell,nk,\hat{\eta} + 1)$. Note that $\hat{\eta} + 1$ is an integer. As observed in \cite{anari2021entropic}, $\kappa(nk-\ell,nk,\hat{\eta} + 1)$ defined in~\eqref{eq-def-kappa} equals to $\binom{\ell}{\hat{\eta}+1}/\binom{nk}{\hat{\eta} + 1}$. This implies that
\begin{align*}
C = \frac{1}{\kappa(nk-\ell,nk,\hat{\eta} + 1)} = \binom{nk}{\hat{\eta} + 1} \Big/\binom{\ell}{\hat{\eta}+1} \leq \tp{\frac{\mathrm{e}nk}{\hat{\eta}+1}}^{\hat{\eta}+1} \Big/ \tp{\frac{\ell}{\hat{\eta}+1}}^{\hat{\eta}+1} \leq \tp{\frac{\mathrm{e}nk}{\ell}}^{\eta + 2} \leq \tp{\frac{\mathrm{e}}{\theta}}^{\eta + 2}. &\qedhere
\end{align*}

\end{proof}

We now prove \Cref{proposition-general-SI-UBF}. We need to introduce some definitions.
By~\eqref{eq-distribution-equal}, we can view $\mu$ as a distribution over $2^{[n]}$ such that for any $\sigma \in \{\0,\1\}^{[n]}$, $\mu(S_\sigma)=\mu(\sigma)$, where $S_\sigma = \{i \in [n] \mid \sigma_i = \1\}$.
Define the \emph{homogenization} of a distribution $\mu$ over $\{\0,\1\}^{[n]}$ is a distribution $\pi = \mu^{\mathrm{hom}}$ over subsets of $[n] \cup \overline{[n]} = \{1,2,\ldots,n\} \cup \{\overline{1},\overline{2},\ldots,\overline{n}\}$.
For any $S \subseteq [n]$, define $S^c = \{\overline{i} \mid i \in [n] \setminus S \}$, and let $\pi(S\cup S^c) = \mu(S)$.
It is straightforward to verify $\pi$ is a homogeneous distribution over $\binom{[n] \cup \overline{[n]}} {n}$.
Let $\Cma_{\pi}$ denote the correlation matrix (\Cref{definition-cor-matrix}) of $\pi$ and $\Ima_{\mu}$ denote the signed influence matrix (\Cref{definition-inf-matrix}) of $\mu$.
The following result is proved in~\cite{AASV21}~(see the proof of Lemma~71 in the full version of~\cite{AASV21}).
\begin{lemma}[\text{\cite{AASV21}}]
\label{lemma-sp-AASV}
The spectrum of $\Cma_{\pi}$ is the union of $\{\lambda_i + 1\}_{1 \leq i \leq n}$ and $n$ copies of 0, where $\lambda_1\geq \lambda_2\geq \ldots \geq \lambda_n$ are all eigenvalues of $\Ima_{\mu}$.
\end{lemma}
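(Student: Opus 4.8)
The plan is to write $\Cma_{\pi}$ as an explicit product built from $\Ima_{\mu}$ and the single-site marginals of $\mu$, and then read off its spectrum using the fact that $AB$ and $BA$ have the same characteristic polynomial up to a power of $\lambda$.

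First I would dispose of degenerate coordinates. Set $p_i \defeq \Pr[\mu]{\sigma_i = \1}$. If $p_i \in \{0,1\}$ for some $i$, then by \Cref{definition-inf-matrix} the $i$-th row of $\Ima_{\mu}$ is zero, and its $i$-th column is zero as well since the (conditional) marginal at $i$ is always degenerate; thus coordinate $i$ contributes exactly one $0$ to the spectrum of $\Ima_{\mu}$. On the other side, the pair $\{i,\overline{i}\}$ decouples from the rest of $\Cma_{\pi}$ and contributes the eigenvalues $\{0,1\}$, which is precisely $\{0\}\cup\{0+1\}$. Removing coordinate $i$ (it is effectively pinned) leaves a smaller homogeneous instance with signed influence matrix $\Ima_{\mu}$ restricted to $[n]\setminus\{i\}$, so iterating reduces everything to the case $p_i\in(0,1)$ for all $i\in[n]$.

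Next I would compute the four $n\times n$ blocks of $\Cma_{\pi}$ in the partition of the ground set into $[n]$ and $\overline{[n]}$. The key facts are: (i) exactly one of $i,\overline{i}$ lies in the $\pi$-random set, so $\Pr[\pi]{\overline{i}\mid i}=\Pr[\pi]{i\mid \overline{i}}=0$; (ii) conditioning on $i$ (resp.\ $\overline{i}$) under $\pi$ is the same as conditioning on $\sigma_i=\1$ (resp.\ $\sigma_i=\0$) under $\mu$, so that $\Pr[\pi]{j\mid i}=\mu_j^{i\gets\1}(\1)$ and $\Pr[\pi]{j\mid\overline{i}}=\mu_j^{i\gets\0}(\1)$; and (iii) the law of total probability, $\mu_j(\1)=p_i\,\mu_j^{i\gets\1}(\1)+(1-p_i)\,\mu_j^{i\gets\0}(\1)$, yields $\mu_j^{i\gets\1}(\1)-\mu_j(\1)=(1-p_i)\Ima_{\mu}(i,j)$ and $\mu_j^{i\gets\0}(\1)-\mu_j(\1)=-p_i\Ima_{\mu}(i,j)$. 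Combining these, and using $\Ima_{\mu}(i,i)=0$ (which makes the ``cross'' entries $\Cma_{\pi}(i,\overline{i})=-(1-p_i)$ and $\Cma_{\pi}(\overline{i},i)=-p_i$ fit the block pattern), gives, with $\mathbf{D}\defeq\mathrm{diag}(p_i)_{i\in[n]}$,
\[
\Cma_{\pi} = \begin{pmatrix} (I-\mathbf{D})(I+\Ima_{\mu}) & -(I-\mathbf{D})(I+\Ima_{\mu}) \\ -\mathbf{D}(I+\Ima_{\mu}) & \mathbf{D}(I+\Ima_{\mu}) \end{pmatrix}.
\]

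Finally, I would factor $\Cma_{\pi}=M\,(I+\Ima_{\mu})\,N$, where $M\in\mathds{R}^{2n\times n}$ stacks $I-\mathbf{D}$ on top of $-\mathbf{D}$ and $N=(\,I\ \ {-I}\,)\in\mathds{R}^{n\times 2n}$. Since $NM=(I-\mathbf{D})+\mathbf{D}=I_n$, we have $NM(I+\Ima_{\mu})=I+\Ima_{\mu}$; writing $\Cma_{\pi}=\bigl(M(I+\Ima_{\mu})\bigr)\cdot N$ and applying the identity $\det(\lambda I_{2n}-PQ)=\lambda^{n}\det(\lambda I_{n}-QP)$ for $P\in\mathds{R}^{2n\times n}$, $Q\in\mathds{R}^{n\times 2n}$, we get $\det(\lambda I_{2n}-\Cma_{\pi})=\lambda^{n}\det(\lambda I_{n}-(I+\Ima_{\mu}))$. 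Hence the eigenvalues of $\Cma_{\pi}$ are exactly those of $I+\Ima_{\mu}$, namely $\{\lambda_i+1\}_{1\le i\le n}$, together with $n$ additional zeros. The main obstacle is getting the block computation exactly right — in particular verifying that the two off-pattern entries $\Cma_{\pi}(i,\overline{i})$ and $\Cma_{\pi}(\overline{i},i)$ land precisely on the $(i,i)$ positions of the off-diagonal blocks above — and handling the degenerate marginals $p_i\in\{0,1\}$ cleanly in the reduction step.
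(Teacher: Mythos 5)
Your proof is correct and complete. Note that the paper itself does not prove this lemma at all --- it is quoted from \cite{AASV21} with a pointer to the proof of Lemma~71 in the full version of that work --- so your argument supplies a self-contained derivation where the paper only cites. I checked the two places where such a computation typically goes wrong, and both are handled properly. First, the block identification: writing $p_i=\Pr[\mu]{\sigma_i=\1}$ and using $\Pr[\pi]{j\mid i}=\mu_j^{i\gets\1}(\1)$, $\Pr[\pi]{\overline j\mid i}=1-\mu_j^{i\gets\1}(\1)$, together with the total-probability identities $\mu_j^{i\gets\1}(\1)-\mu_j(\1)=(1-p_i)\Ima_\mu(i,j)$ and $\mu_j^{i\gets\0}(\1)-\mu_j(\1)=-p_i\Ima_\mu(i,j)$, all four $n\times n$ blocks come out exactly as you state, and the cross entries $\Cma_\pi(i,\overline i)=-(1-p_i)$, $\Cma_\pi(\overline i,i)=-p_i$ do land on the diagonals of the off-diagonal blocks precisely because $\Ima_\mu(i,i)=0$. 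Second, the spectral step: with $P=M(I+\Ima_\mu)\in\mathds{R}^{2n\times n}$ and $Q=N\in\mathds{R}^{n\times 2n}$ one has $QP=I+\Ima_\mu$ since $NM=I_n$, and Sylvester's identity $\det(\lambda I_{2n}-PQ)=\lambda^{n}\det(\lambda I_n-QP)$ gives exactly the claimed spectrum. Your treatment of degenerate marginals is also sound: if $p_i\in\{0,1\}$ then rows and columns $i$ and $\overline i$ of $\Cma_\pi$ vanish except for a single diagonal entry equal to $1$ (on $i$ if $p_i=0$, on $\overline i$ if $p_i=1$), so the pair contributes eigenvalues $\{0,1\}$, matching the zero eigenvalue that coordinate $i$ contributes to $\Ima_\mu$; the remaining principal submatrix is the correlation matrix of the homogenization of $\mu$ restricted to $[n]\setminus\{i\}$, so the induction closes.
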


To give the next lemma, we view $\mu$ as a distribution over $\{\0,\1\}^{[n]}$.
We use $\Omega(\mu)$ and $\Omega(\pi)$ to denote the support of $\mu$ and $\pi$ respectively.
Fix a function $f: \Omega(\mu) \to \mathds{R}_{\geq 0}$.
Recall  $S_{\sigma} = \{i \in [n] \mid \sigma_i = \1\}$ for any $\sigma \in \{\0,\1\}^{[n]}$.
We can construct $f^{(n)}: \Omega({\pi}) \to \mathds{R}_{\geq 0}$ by $f^{(n)}(S_\sigma \cup S^c_\sigma) = f(\sigma)$ for all $\sigma \in \Omega_{\mu}$, where $S^c_\sigma = \{\overline{i} \mid i \in [n] \setminus S_\sigma \}$.
Let $X$ denote the simplicial complexes generated by $\pi$.
Let $U_{\cdot}$ and $D_{\cdot}$ denote the up walk and down walk on $X$ (\Cref{definition-DUW}).
Let $\pi_{(n)} = \pi$ and $\pi_{(j)} = \pi_{(n)} D_{n \to j}$ for all $0 \leq j < n$.
%
Let $f^{(j)} = U_{j \to n}f^{(n)}$ for all $0 \leq j < n$.
The following lemma is proved in~\cite{chen2020optimal} (see the proof of Lemma~2.6 in the full version of~\cite{chen2020optimal}).
\begin{lemma}[\text{\cite{chen2020optimal}}]
\label{lemma-UBF=ED}
For any $0\leq j \leq n$,
it holds that 
\begin{align}
\frac{1}{\binom{n}{j}}\sum_{S \in \binom{[n]}{j}}\mu[\Ent[S]{f}]&= \Ent[\pi_{(n)}]{f^{(n)}} - \Ent[\pi_{(n-j)}]{f^{(n-j)}}.\label{eq-CLV-2}	
\end{align}
\end{lemma}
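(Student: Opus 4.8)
The plan is to prove~\eqref{eq-CLV-2} by directly unwinding the definitions of the homogenization $\pi=\mu^{\mathrm{hom}}$, the down/up walks of \Cref{definition-DUW}, and the functions $f^{(j)}$; the identity will turn out to carry no analytic content, being a chain-rule-for-entropy bookkeeping statement transported through the simplicial complex $X$. The conceptual core is a bijection at level $n-j$: since every face $\tau\in X(n-j)$ is contained in some $S_\sigma\cup S^c_\sigma$, which holds exactly one of $i,\overline i$ for each $i\in[n]$, the face $\tau$ consists of $n-j$ literals over pairwise distinct variables and hence encodes a pinning $(\Lambda,\sigma_\Lambda)$ with $\Lambda=\Lambda(\tau)\in\binom{[n]}{n-j}$ and $\sigma_\Lambda\in\Omega(\mu_\Lambda)$; conversely every such pinning arises from a unique $\tau$. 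I will index this same data by the complementary set $S=[n]\setminus\Lambda\in\binom{[n]}{j}$ together with $\sigma\in\Omega(\mu_{V\setminus S})$, which is precisely what indexes the summands on the left-hand side of~\eqref{eq-CLV-2}.

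First I would record two identities on a face $\tau\leftrightarrow(\Lambda,\sigma_\Lambda)$. From $\pi_{(n-j)}=\pi D_{n\to n-j}$ and $D_{n\to n-j}(\alpha,\tau)=1/\binom{n}{n-j}$ for $\tau\subseteq\alpha$, summing $\pi(\alpha)=\mu(\cdot)$ over the faces $\alpha\in X(n)$ extending $\tau$ gives $\pi_{(n-j)}(\tau)=\mu_\Lambda(\sigma_\Lambda)/\binom{n}{j}$. From $U_{n-j\to n}(\tau,\alpha)=\pi(\alpha)/\mu_\Lambda(\sigma_\Lambda)$ for $\alpha\supseteq\tau$, I get $f^{(n-j)}(\tau)=(U_{n-j\to n}f^{(n)})(\tau)=\E[\mu^{\sigma_\Lambda}]{f}$, the conditional expectation of $f$ under the pinning $\sigma_\Lambda$. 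I would also note the adjunction $\E[\pi_{(n-j)}]{U_{n-j\to n}g}=\E[\pi_{(n)}]{g}$ valid for all $g:X(n)\to\mathds{R}$, which follows since $\pi_{(n-j)}(\tau)\,U_{n-j\to n}(\tau,\alpha)=\pi(\alpha)/\binom{n}{j}$ whenever $\tau\subseteq\alpha$, and each $\alpha\in X(n)$ has exactly $\binom{n}{n-j}$ sub-faces of dimension $n-j$.

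Then I would assemble the identity. Applying the adjunction with $g=f^{(n)}$ gives $\E[\pi_{(n-j)}]{f^{(n-j)}}=\E[\pi_{(n)}]{f^{(n)}}=\E[\mu]{f}$, so the mean-dependent parts (the terms $-\E[]{f}\log\E[]{f}$) of $\Ent[\pi_{(n)}]{f^{(n)}}$ and $\Ent[\pi_{(n-j)}]{f^{(n-j)}}$ coincide and cancel in the difference, leaving $\Ent[\pi_{(n)}]{f^{(n)}}-\Ent[\pi_{(n-j)}]{f^{(n-j)}}=\E[\pi_{(n)}]{f^{(n)}\log f^{(n)}}-\E[\pi_{(n-j)}]{f^{(n-j)}\log f^{(n-j)}}$. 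The first term equals $\E[\mu]{f\log f}$ by the level-$n$ bijection $\sigma\leftrightarrow S_\sigma\cup S^c_\sigma$, and substituting the formulas above into the second (reindexing $\Lambda\leftrightarrow S=[n]\setminus\Lambda$) rewrites it as $\frac{1}{\binom{n}{j}}\sum_{S\in\binom{[n]}{j}}\sum_{\sigma\in\Omega(\mu_{V\setminus S})}\mu_{V\setminus S}(\sigma)\,\E[\mu^\sigma]{f}\log\E[\mu^\sigma]{f}$. For the left-hand side I would expand $\mu[\Ent[S]{f}]=\sum_\sigma\mu_{V\setminus S}(\sigma)\big(\E[\mu^\sigma]{f\log f}-\E[\mu^\sigma]{f}\log\E[\mu^\sigma]{f}\big)$, average over $S\in\binom{[n]}{j}$, and apply the chain rule $\sum_\sigma\mu_{V\setminus S}(\sigma)\,\E[\mu^\sigma]{f\log f}=\E[\mu]{f\log f}$ (valid for each fixed $S$): this collapses the first piece to $\E[\mu]{f\log f}$ and leaves exactly the same double sum as the second piece, so $\frac{1}{\binom{n}{j}}\sum_S\mu[\Ent[S]{f}]$ matches the expression obtained for $\Ent[\pi_{(n)}]{f^{(n)}}-\Ent[\pi_{(n-j)}]{f^{(n-j)}}$, which is~\eqref{eq-CLV-2}.

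The main obstacle is bookkeeping rather than depth: I must keep the two binomial normalizations $\binom{n}{n-j}=\binom{n}{j}$ aligned, check that the bijection $\tau\leftrightarrow(\Lambda,\sigma_\Lambda)$ is well defined in both directions (using that $\tau$ lies in the support complex, so it never contains both a variable and its negation and its underlying pinning is $\mu$-feasible), and handle degenerate faces with the convention $0\log0=0$. As sanity checks I would verify the endpoints: when $j=0$ the face $\tau$ is empty, $\pi_{(n)}=\pi_{(n-j)}$, $f^{(n)}=f^{(n-j)}$, and both sides of~\eqref{eq-CLV-2} vanish; when $j=n$ the space $X(0)$ is a single point, $\pi_{(0)}$ a point mass, and $f^{(0)}$ the constant $\E[\mu]{f}$, so the right-hand side is $\Ent[\pi_{(n)}]{f^{(n)}}=\Ent[\mu]{f}$, matching the single surviving term $S=[n]$ on the left, which also equals $\Ent[\mu]{f}$.
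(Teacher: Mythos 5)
Your proof is correct, and the paper itself does not reprove this lemma (it only cites Lemma~2.6 of \cite{chen2020optimal}); your direct computation --- identifying level-$(n-j)$ faces with feasible pinnings $(\Lambda,\sigma_\Lambda)$, deriving $\pi_{(n-j)}(\tau)=\mu_\Lambda(\sigma_\Lambda)/\binom{n}{j}$ and $f^{(n-j)}(\tau)=\E[\mu^{\sigma_\Lambda}]{f}$, and reducing both sides of \eqref{eq-CLV-2} to $\E[\mu]{f\log f}$ minus the averaged $\E[\mu^{\sigma}]{f}\log \E[\mu^{\sigma}]{f}$ term via the tower property --- is exactly the standard argument behind the cited result. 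The only blemish is a harmless slip in your $j=0$ sanity check: there $n-j=n$, so $\tau$ is a full face and it is $S$ that is empty, though the conclusion that both sides vanish still holds.
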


Now, we are ready to prove \Cref{proposition-general-SI-UBF}.

\begin{proof}[Proof of \Cref{proposition-general-SI-UBF}]
We view $\mu$ as a distribution over $2^{[n]}$.
%
%
Let $\pi = \mu^{\mathrm{hom}}$ be the homogenization of $\mu$. 

We first show that $\pi$ is $\frac{1}{\eta+1}$-fractionally log-concave. 
By~\Cref{proposition-cor-FLC}, we only need to show that $\lambda_{\max}(\Cma_{\pi^{(\lambda)}}) \leq \eta+1$ for any 	positive local fields $\*\lambda = (\lambda_i)_{i \in [n] \cup \overline{[n]} } \in \mathds{R}_{>0}^{[n] \cup \overline{[n]}}$.
Fix a $\*\lambda  \in \mathds{R}_{>0}^{[n] \cup \overline{[n]}}$. 
Define $\*\phi = (\phi_i)_{i \in [n]}$ such that for any $i \in [n]$, $\phi_i = \frac{\lambda_i}{\lambda_{\overline{i}}}$.	
For any subset $S \subseteq [n]$, recall $S^c = \{\overline{i} \mid i \in [n] \setminus S \}$, we have 
\begin{align*}
\mu^{(\*\phi)}(S) &\propto \mu(S)\prod_{i \in S} 	\frac{\lambda_i}{\lambda_{\overline{i}}} \propto \tp{\prod_{i \in [n]}\lambda_{\overline{i}} }\cdot \mu(S)\prod_{i \in S} 	\frac{\lambda_i}{\lambda_{\overline{i}}} = \mu(S) \prod_{i \in S}\lambda_i \prod_{i \in [n] \setminus S}\lambda_{\overline{i}},\\
\pi^{(\*\lambda)}(S \cup S^c) &\propto \mu(S) \prod_{i \in S}\lambda_i \prod_{i \in [n] \setminus S}\lambda_{\overline{i}} \propto \mu^{(\*\phi)}(S).
\end{align*}
This implies that $\pi^{(\*\lambda)}$ is the homogenization of $\mu^{(\*\phi)}$.
Since $\mu$ is $\eta$-spectrally independent with all fields, it holds that $\lambda_{\max}(\Ima_{\mu^{(\*\phi)}})\leq \eta$.
By \Cref{lemma-sp-AASV}, we have $\lambda_{\max}(\Cma_{\pi^{(\*\lambda)}})\leq \eta+1$.
By~\Cref{proposition-cor-FLC}, $\pi$ is $\frac{1}{\eta+1}$-fractionally log-concave.

%
Let $X$ denote the simplicial complexes generated by $\pi$.
Let $U_{\cdot}$ and $D_{\cdot}$ denote the up walk and down walk on $X$ (\Cref{definition-DUW}).
Let $\pi_{(n)} = \pi$ and $\pi_{(j)} = \pi_{(n)} D_{n \to j}$ for all $0 \leq j < n$.
Let $\Omega({\pi})$ denote the support of $\pi$.
Let $f^{(n)}: \Omega(\pi) \to \mathds{R}_{\geq 0}$ and $f^{(j)} = U_{j \to n}f^{(n)}$ for all $0 \leq j < n$.
Combining \Cref{theorem-AJK+} and \Cref{lemma-local-etp-decay}, we have for any $0\leq j < n - \ctp{\eta+1}$, it holds that
\begin{align}
\label{eq-proof-e-decay}
\Ent[\pi_{(j)}]{f^{(j)}}  \leq \tp{1-\kappa\tp{j,n,\eta+1}}\Ent[\pi_{(n)}]{f^{(n)}},
\end{align}
where $\kappa(\cdot)$ is defined in~\eqref{eq-def-kappa}.

Now, we view $\mu$ as a distribution over $\{\0,\1\}^V$.
We use $\Omega({\mu}) \subseteq \{\0,\1\}^V$ to denote the support of $\mu$.
Fix a function $f: \Omega(\mu) \to \mathds{R}_{\geq 0}$.
Note that for every $\sigma \in \Omega({\mu})$, the set $S_{\sigma} = \{i \in [n] \mid \sigma_i = \1\}$.
We can construct $f^{(n)}: \Omega(\pi) \to \mathds{R}_{\geq 0}$ by $f^{(n)}(S_\sigma \cup S^c_\sigma) = f(\sigma)$ for all $\sigma \in \Omega(\mu)$. 
By \Cref{lemma-UBF=ED}, for all $j \leq n$,
\begin{align}
\label{eq-proof-UBF-etp}
\frac{1}{\binom{n}{j}}\sum_{S \in \binom{[n]}{j}}\mu[\Ent[S]{f}]&= \Ent[\pi_{(n)}]{f^{(n)}} - \Ent[\pi_{(n-j)}]{f^{(n-j)}}.	
\end{align}
For any $\ctp{\eta+1}< \ell \leq n$, we have 
\begin{align*}
\Ent[\mu]{f} &\overset{(\ast)}{=} \Ent[\pi_{(n)}]{f^{(n)}} 	= \Ent[\pi_{(n)}]{f^{(n)}} -  \Ent[\pi_{(n-\ell)}]{f^{(n-\ell)}} + \Ent[\pi_{(n-\ell)}]{f^{(n-\ell)}}\\
\text{(by~\eqref{eq-proof-e-decay} and~\eqref{eq-proof-UBF-etp})}\quad &\leq \frac{1}{\binom{n}{\ell}}\sum_{S \in \binom{[n]}{\ell}}\mu[\Ent[S]{f}] + \tp{1- \kappa\tp{n-\ell,n,\eta+1}} \Ent[\pi_{(n)}]{f^{(n)}}\\
&\overset{(\star)}{=} \frac{1}{\binom{n}{\ell}}\sum_{S \in \binom{[n]}{\ell}}\mu[\Ent[S]{f}] + \tp{1- \kappa\tp{n-\ell,n,\eta+1}} \Ent[\mu]{f}
\end{align*}
where $(\ast)$ and $(\star)$ hold due to the definitions of $\pi_{(n)}$ and $f^{(n)}$. This proves the proposition. 
\end{proof}

\section{Spectral Independence of Transformed Distributions}
\label{section-SI-muk}
In this section, we prove \Cref{lem:muk-inf}.
Let $\eta > 0$ be a real number.
Recall that $\mu$ is a distribution over $\{\0, \1\}^V$ that is $\eta$-spectrally independent with all fields.
For $k \geq 1$ be an integer, $\mu_k$ is the $k$-transformed distribution of $\mu$, and we want to show that $\mu_k$ is $(\eta+1)$-spectrally independent with all fields.
If $k = 1$, the lemma is trivial. We assume $k \geq 2$ in the proof.

For each $(v,i) \in V \times [k]$, we use $v_i$ to denote the pair $(v,i)$.
Let $\*\phi \in \mathds{R}_{> 0}^{V\times [k]}$ be a vector of local fields and $\pi_k \triangleq \mu_k^{\tp{\*\phi}}$ be the magnetized distribution generated by $\mu_k$ and the local fields $\*\phi$.
Then, let $\*\varphi \in \mathds{R}_{>0}^V$ be another vector of local fields defined as
\begin{align} \label{eq:varphi}
  \forall w\in V, \quad \varphi_w \triangleq \frac{1}{k}\sum_{h \in [k]} \phi_{w_h},
\end{align}
and let $\pi \triangleq \mu^{\tp{\*\varphi}}$ be obtained by imposing the local fields $\*\varphi$ onto $\mu$.


\begin{lemma} \label{lem:muk-inf-aux}
  Let $u_i, v_j \in V \times [k]$, it holds that
  \begin{align}
    \text{if $u \not= v$}, \quad \label{eq:k-trans-inf-target-1}
    & \abs{\Ima_{\pi_k}(u_i, v_j)} \leq \frac{\phi_{v_j}}{\sum_{h \in [k]} \phi_{v_h}} \abs{\Ima_\pi(u, v)} ; \\
    \text{if $u = v$ and $i \not= j$}, \quad \label{eq:k-trans-inf-target-2}
    & \abs{\Ima_{\pi_k}(u_i, u_j)} \leq \frac{\phi_{u_j}}{\sum_{h \in [k] \setminus \{i\}} \phi_{u_h}}.
  \end{align}
\end{lemma}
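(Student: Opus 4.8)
The plan is to first derive an explicit two-stage description of $\pi_k$ in terms of $\pi$, and then read off both inequalities by elementary conditional-probability computations. By \Cref{def:k-trans} together with the definition of magnetization, every $\sigma$ in the support of $\mu_k$ satisfies $\mu_k(\sigma)=\mu(\sigma^\star)\,k^{-\norm{\sigma^\star}_{+}}$, hence
\[
\pi_k(\sigma)\;\propto\;\mu(\sigma^\star)\prod_{v:\,\sigma^\star_v=\1}\phi_{v_{i^\ast_v(\sigma)}},
\]
where $i^\ast_v(\sigma)$ is the unique coordinate of bucket $v$ on which $\sigma$ equals $\1$. Summing over the position of the single $\1$ inside each ``on'' bucket shows that the pushforward of $\pi_k$ along $\sigma\mapsto\sigma^\star$ is exactly $\pi=\mu^{(\*\varphi)}$ with $\*\varphi$ as in \eqref{eq:varphi}, and that, conditioned on $\sigma^\star=\tau$, the coordinates $i^\ast_v$ over the buckets $v$ with $\tau_v=\1$ are mutually independent with $\Pr{i^\ast_v=h}=\phi_{v_h}\big/\sum_{h'\in[k]}\phi_{v_{h'}}$. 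Equivalently, one can draw $Y\sim\pi_k$ by first drawing $\tau\sim\pi$ and then, independently over all $v$ with $\tau_v=\1$, placing the $\1$ of bucket $v$ at coordinate $h$ with probability $\phi_{v_h}\big/\sum_{h'}\phi_{v_{h'}}$.

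Granting this, I would prove \eqref{eq:k-trans-inf-target-1} as follows. Fix $u\ne v$, and assume $\Omega\tp{(\pi_k)_{u_i}}=\{\0,\1\}$, since otherwise $\Ima_{\pi_k}(u_i,v_j)=0$ by \Cref{definition-inf-matrix}. Conditioning on $Y_{u_i}$ constrains only $\tau$ and the coordinate $i^\ast_u$, which is independent of $i^\ast_v$; since $\{Y_{v_j}=\1\}=\{\tau_v=\1\}\cap\{i^\ast_v=j\}$ and the second event has probability $\phi_{v_j}\big/\sum_h\phi_{v_h}$ independently of the conditioning,
\[
\Ima_{\pi_k}(u_i,v_j)=\frac{\phi_{v_j}}{\sum_{h\in[k]}\phi_{v_h}}\tp{\Pr[\pi_k]{\tau_v=\1\mid Y_{u_i}=\1}-\Pr[\pi_k]{\tau_v=\1\mid Y_{u_i}=\0}}.
\]
Marginalizing out $i^\ast_u$, the event $\{Y_{u_i}=\1\}$ becomes $\{\tau_u=\1\}$, so the first conditional probability is $\pi_v^{u\gets\1}(\1)$; and $\{Y_{u_i}=\0\}$ splits into $\{\tau_u=\0\}$ and $\{\tau_u=\1\}\cap\{i^\ast_u\ne i\}$, so the second is a convex combination of $\pi_v^{u\gets\0}(\1)$ and $\pi_v^{u\gets\1}(\1)$. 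Thus the parenthesized difference equals $\lambda\tp{\pi_v^{u\gets\1}(\1)-\pi_v^{u\gets\0}(\1)}$ for some $\lambda\in[0,1]$, whose modulus is at most $\abs{\Ima_\pi(u,v)}$, giving \eqref{eq:k-trans-inf-target-1} (the degenerate case $\Pr[\pi]{\tau_u=\0}=0$ is consistent, both sides vanishing). For \eqref{eq:k-trans-inf-target-2}, take $u=v$, $i\ne j$: conditioning on $Y_{u_i}=\1$ forces $i^\ast_u=i$, hence $Y_{u_j}=\0$ and $\pi_{k,u_j}^{u_i\gets\1}(\1)=0$; while $\pi_{k,u_j}^{u_i\gets\0}(\1)=\Pr[\pi]{\tau_u=\1}\tfrac{\phi_{u_j}}{\sum_h\phi_{u_h}}\big/\Pr[\pi_k]{Y_{u_i}=\0}$ and $\Pr[\pi_k]{Y_{u_i}=\0}\ge\Pr[\pi]{\tau_u=\1}\tfrac{\sum_{h\ne i}\phi_{u_h}}{\sum_h\phi_{u_h}}$, which yields the claimed bound.

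The bookkeeping is routine; the one point that needs care is the convex-combination observation used in \eqref{eq:k-trans-inf-target-1} — that conditioning on $Y_{u_i}=\0$ interpolates between the two pinnings $\tau_u=\0$ and $\tau_u=\1$ of $\pi$ rather than reaching outside that range. This is precisely what stops the influence from amplifying and what produces the clean ``$+1$'' loss in \Cref{lem:muk-inf}. A secondary nuisance is the handling of probability-zero spins, which I would dispatch at the outset by noting that such degeneracies force the relevant $\Ima_{\pi_k}$ entries to be zero and are compatible with the right-hand sides.
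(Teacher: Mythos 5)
Your proof is correct and follows essentially the same route as the paper's: your two-stage sampling description of $\pi_k$ is the probabilistic restatement of the paper's algebraic identities showing that the $\sigma^\star$-marginal of $\pi_k$ is $\mu^{(\*\varphi)}$, and your convex-combination observation for the conditioning $Y_{u_i}=\0$ is exactly the paper's coupling step with the auxiliary distribution $\nu=\mu^{(\*\varphi')}$ (whose field at $u$ is reduced by removing $\phi_{u_i}$), since $\nu_v(\1)=\nu_u(\1)\pi_v^{u\gets\1}(\1)+\nu_u(\0)\pi_v^{u\gets\0}(\1)$. The diagonal case \eqref{eq:k-trans-inf-target-2} is handled identically in both arguments.
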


Now, we are ready to prove \Cref{lem:muk-inf}.
\begin{proof}[Proof of \Cref{lem:muk-inf}]
  Let $u_i$ be an element of $V \times [k]$, by \eqref{eq:k-trans-inf-target-1}, \eqref{eq:k-trans-inf-target-2} and the fact that $\mu$ is $\eta$-spectrally independent with all fields, we have
  \begin{align*}
    \sum_{v_j \in V \times [k]} \abs{\Ima_{\pi_k}(u_i, v_j)} &\leq \sum_{v \in V} \abs{\Ima_{\pi}(u, v)} + 1 \leq \eta + 1. \qedhere
  \end{align*}
\end{proof}

The rest of this section is dedicated to the proof of \Cref{lem:muk-inf-aux}.
Recall that by the definition of $\pi_k$, 
\begin{align} \label{eq:pi-k}
  \pi_k(\sigma) &= \frac{\mu_k(\sigma)}{Z_{\*\phi}}\prod_{\substack{v_i \in V \times [k] \\ \sigma_{v_i} = \1}} \phi_{v_i},
\end{align}
where $Z_{\*\phi} = \sum_{\sigma \in \Omega(\mu_k)} \mu_k(\sigma) \prod_{v_i \in V \times [k]: \sigma_{v_i} = \1} \phi_{v_i}$ is the normalizing factor.

We prove \Cref{lem:muk-inf-aux} by verifying \eqref{eq:k-trans-inf-target-1} and \eqref{eq:k-trans-inf-target-2}.

\subsection{Verification of \eqref{eq:k-trans-inf-target-1}}
Without loss of generality, we may assume that $\Omega(\pi_{k, u_i}) = \{\0, \1\}$, since otherwise \eqref{eq:k-trans-inf-target-1} holds trivially.
In this case, it is equivalent for us to bound
\begin{align*}
  \abs{\Ima_{\pi_k}(u_i, v_j)} &= \abs{\pi^{u_i\gets \1}_{k,v_j}(\1) - \pi^{u_i\gets \0}_{k,v_j}(\1)}.
\end{align*}
We will calculate $\pi^{u_i \gets \1}_{k, v_j}(\1)$ and $\pi^{u_i \gets \0}_{k, v_j}(\1)$, respectively.
For $\pi^{u_i \gets \1}_{k, v_j}(\1)$, by the law of conditional distirbution, it holds that
\begin{align*}
  \pi^{u_i \gets \1}_{k, v_j}(\1) &= \frac{\Pr[Y\sim \pi_k]{Y_{u_i} = \1 \land Y_{v_j} = \1}}{\Pr[Y \sim \pi_k]{Y_{u_i} = \1}}.
\end{align*}
By the definition of the distribution $\pi_k$, it holds that
\begin{align}
  &\,\Pr[Y \sim \pi_k]{Y_{u_i} = \1 \land Y_{v_j} = \1}
  \nonumber = \frac{1}{Z_{\*\phi}}\sum_{\substack{Y \in \Omega(\mu_k):\\Y_{u_i} = \1 \land Y_{v_j} = \1}} \mu_k(Y) \prod_{\substack{w_h \in V \times [k]:\\ Y_{w_h} = \1}} \phi_{w_h}\\ 
  \label{eq:k-trans-inf-1} =\,& \frac{\phi_{u_i}\phi_{v_j}}{Z_{\*\phi}k^2} \sum_{\substack{X \in \Omega(\mu):\\ X_u = \1 \land X_v = \1}} \mu(X)  \prod_{\substack{w \in V \setminus \{u,v\} \\ X_w = \1}} \tp{\sum_{h\in[k]} \frac{\phi_{w_h}}{k}} = \frac{\phi_{u_i}\phi_{v_j}}{Z_{\*\phi}k^2} \sum_{\substack{X \in \Omega(\mu):\\ X_u = \1 \land X_v = \1}} \mu(X)  \prod_{\substack{w \in V \setminus \{u,v\} \\ X_w = \1}} \varphi_w,
\end{align}
where the last equation holds due to the definition of $\*\varphi$ in~\eqref{eq:varphi}.
Similarly, it holds that
\begin{align} \label{eq:k-trans-inf-2}
  \Pr[Y \sim \pi_k]{Y_{u_i} = \1} &= \frac{\phi_{u_i}}{Z_{\*\phi}k} \sum_{\substack{X \in \Omega(\mu):\\ X_u = 1}} \mu(X)\prod_{\substack{w\in V\setminus \{u\} \\ X_w = \1}} \varphi_w.
\end{align}
Combining \eqref{eq:k-trans-inf-1} and \eqref{eq:k-trans-inf-2}, we have
\begin{align}\label{eq:k-trans-inf-3}
 \pi^{u_i \gets \1}_{k, v_j}(\1) &= 	\frac{\phi_{v_j}}{k} \cdot \frac{ \sum_{X \in \Omega(\mu): X_u = \1 \land X_v = \1} \mu(X)\prod_{w \in V \setminus \{u,v\}:X_w = \1} \varphi_w}{\sum_{X \in \Omega(\mu): X_u = \1} \mu(X)\prod_{w\in V\setminus \{u\}:X_w = \1} \varphi_w}\notag\\
 &= \frac{\phi_{v_j}}{\sum_{h \in [k]} \phi_{v_h}} \cdot \frac{ \sum_{X \in \Omega(\mu): X_u = \1 \land X_v = \1} \mu(X)\prod_{w \in V: X_w = \1} \varphi_w}{\sum_{X \in \Omega(\mu): X_u = \1} \mu(X)\prod_{w\in V:X_w = \1}\varphi_w} = \frac{\phi_{v_j}}{\sum_{h \in [k]} \phi_{v_h}} \cdot \pi^{u \gets \1}_v(\1),
\end{align}
where the last equation holds due to the definition of $\pi = \mu^{(\*\varphi)}$. 
%

Next, we calculate
\begin{align*}
  \pi^{u_i \gets \0}_{k, v_j}(\1) &= \frac{\Pr[Y \sim \pi_k]{Y_{u_i} = \0 \land Y_{v_j} = \1}}{\Pr[Y \sim \pi_k]{Y_{u_i} = \0}}.
\end{align*}
Let $\mathds{1}[\cdot]$ denote the indicator random variable.
By the definition of $\pi_k$, we have the following equation
\begin{align*} 
  &\Pr[Y\sim \pi_k]{Y_{u_i} = \0 \land Y_{v_j} = \1}\\ 
  =\,& \frac{\phi_{v_j}}{Z_{\*\phi}k}\sum_{\substack{X \in \Omega(\mu):\\ X_v = \1}} \mu(X)   \tp{\mathds{1}[X_u = \1]\tp{\sum_{h \in [k]\setminus \{i\}} \frac{\phi_{u_h}}{k}} + \mathds{1}[X_u = \0]}  \prod_{\substack{w \in V\setminus\{u, v\}: \\ X_w = \1}}\tp{\sum_{h \in [k]} \frac{\phi_{w_h}}{k}}\\
   =\,& \frac{\phi_{v_j}}{Z_{\*\phi}k}\sum_{\substack{X \in \Omega(\mu):\\ X_v = \1}} \mu(X)  \tp{\mathds{1}[X_u = \1]\varphi'_u + \mathds{1}[X_u = \0]} \prod_{\substack{w \in V\setminus\{u, v\}: \\ X_w = \1}}\varphi'_w\\
   =\,& \frac{\phi_{v_j}}{Z_{\*\phi}k}\sum_{\substack{X \in \Omega(\mu):\\ X_v = \1}} \mu(X) \prod_{\substack{w \in V\setminus\{v\}: \\ X_w = \1}}\varphi'_w
\end{align*}
where the vector $\*\varphi'\in \mathds{R}_{> 0}$ is the local fields such that
\begin{align} \label{eq:varphi-prime}
  \forall w \in V, \quad \varphi'_w = \begin{cases}
    \frac{1}{k} \sum_{h \in [k]} \phi_{w_h} & \text{ if } w \neq u \\
    \frac{1}{k} \sum_{h \in [k]\setminus \{i\}} \phi_{w_h} & \text{ if } w = u.
  \end{cases}
\end{align}
Similarly, it holds that
\begin{align*} 
  \Pr[Y\sim \pi_k]{Y_{u_i} = \0} &= \frac{1}{Z_{\*\phi}}\sum_{X \in \Omega(\mu)} \mu(X)  \tp{\mathds{1}[X_u = \1]\varphi'_u + \mathds{1}[X_u = \0]}  \prod_{\substack{w \in V\setminus\{u\}:\\ X_w = \1}}\varphi'_w = \frac{1}{Z_{\*\phi}}\sum_{X \in \Omega(\mu)} \mu(X)  \prod_{\substack{w \in V:\\ X_w = \1}}\varphi'_w.
\end{align*}
By a similar calculation to~\eqref{eq:k-trans-inf-3}, we can verify that 
\begin{align} \label{eq:k-trans-inf-4}
  \pi^{u_i \gets \0}_{k, v_j}(\1) &= \frac{\phi_{v_j}}{k} \cdot \frac{\sum_{X \in \Omega(\mu):X_v = \1} \mu(X)   \prod_{w \in V\setminus\{v\}:  X_w = \1}\varphi'_w}{\sum_{X \in \Omega(\mu)} \mu(X) \prod_{w \in V: X_w = \1}\varphi'_w} \notag\\
  &=\frac{\phi_{v_j}}{\sum_{h \in [k]} \phi_{v_h}} \cdot \frac{\sum_{X \in \Omega(\mu):X_v = \1} \mu(X)   \prod_{w \in V:  X_w = \1}\varphi'_w}{\sum_{X \in \Omega(\mu)} \mu(X) \prod_{w \in V: X_w = \1}\varphi'_w} = \frac{\phi_{v_j}}{\sum_{h \in [k]} \phi_{v_h}} \cdot \nu_v(\1),
\end{align}
where $\nu = \mu^{(\*\varphi')}$. 
Combining \eqref{eq:k-trans-inf-3} and \eqref{eq:k-trans-inf-4}, we know that
\begin{align} \label{eq:k-trans-inf-5}
  \abs{\pi^{u_i \gets \1}_{k, v_j}(\1) - \pi^{u_i \gets \0}_{v_j}(\1)} &= \frac{\phi_{v_i}}{\sum_{h\in [k]}\phi_{v_h}} \cdot  \abs{\pi^{u \gets \1}_v(\1) - \nu_v(\1)}.
\end{align}
In order to bound $\abs{\pi^{u \gets \1}_v(\1) - \nu_v(\1)}$, we construct the following coupling between $\pi^{u \gets \1}_v$ and $\nu_v$:
\begin{itemize}
\item sample a random value $c \in \{\0, \1\}$ according to the distribution $\nu_u$;
\item sample $c_v, c_v'$ jointly according to the optimal coupling between $\pi^{u \gets \1}_v$ and $\nu_v^{u \gets c}$.
\end{itemize}
Note that $\abs{\pi^{u \gets \1}_v(\1) - \nu_v(\1)}$ is the total variation distance between $\pi^{u \gets \1}_v$ and $\nu_v$. By the coupling inequality, we have
\begin{align*}
 \abs{\pi^{u \gets \1}_v(\1) - \nu_v(\1)}
  &\leq \Pr{c_v \not= c_v'} = \sum_{c \in \Omega(\nu_u)} \nu_u(c) \cdot \DTV{\pi^{u \gets \1}_v}{\nu^{u\gets c}_v}.
\end{align*}
Recall $k \geq 2$, thus $\*\phi,\*\varphi$ and $\*\varphi'$ are all positive local fields and $\{\0,\1\}  = \Omega(\pi_{k, u_i})$.
It holds that $\Omega(\mu_u) = \Omega(\pi_u) = \Omega(\nu_u)=\{\0,\1\}$.
Note that  $\*\varphi$ and $\*\varphi'$ only differs at $u$. It is straightforward to verify that for any $c \in \Omega(\nu_u)$, $\pi^{u\gets c}_v = \nu^{u \gets c}_v$. We have
\begin{align*}
\abs{\pi^{u \gets \1}_v(\1) - \nu_v(\1)} \leq 	 \sum_{c \in \Omega({\nu_u})} \nu_u(c) \cdot \DTV{\pi^{u \gets \1}_v}{\pi^{u\gets c}_v} \leq  \DTV{\pi^{u \gets \0}_v}{\pi^{u \gets \1}_v} = \abs{\Ima_{\pi}(u, v)}.
\end{align*}
Combining above inequality with \eqref{eq:k-trans-inf-5} proves \eqref{eq:k-trans-inf-target-1}.

\subsection{Verification of \eqref{eq:k-trans-inf-target-2}}
Recall that in this case, we have $u = v$ and $i \neq j$.
Without loss of generality, we assume $\Omega(\pi_{k, u_i}) = \{\0, \1\}$, since otherwise \eqref{eq:k-trans-inf-target-2} holds trivially.
In this case, we have
\begin{align} \label{eq:k-trans-inf-7}
  \abs{\Ima_{\pi_k}(u_i, v_j)} &= \abs{\pi^{u_i \gets \1}_{k, u_j}(\1) - \pi^{u_i \gets \0}_{k, u_j}(\1)} = \pi^{u_i \gets \0}_{k, u_j}(\1),
\end{align}
where the last equation holds because $\pi^{u_i \gets \1}_{k, u_j}(\1) = 0$.
By the definition of the conditional distribution,
\begin{align*}
  \pi^{u_i \gets \0}_{k, u_j}(\1) &= \frac{\Pr[Y \sim \pi_k]{Y_{u_i} = \0 \land Y_{u_j} = \1}}{\Pr[Y \sim \pi_k]{Y_{u_i} = \0}}.
\end{align*}
By the definition of $\pi_k$, 
we have
\begin{align*}
  \Pr[Y \sim \pi_k]{Y_{u_i} = -1}
  &= \frac{1}{Z_{\*\phi}} \sum_{X \in \Omega(\mu)} \mu(X)\tp{\mathds{1}[X_u = \1]\tp{\sum_{h \in [k]\setminus \{i\}}\frac{\phi_{u_h}}{k}} + \mathds{1}[X_u = \0]} \prod_{\substack{w \in V\setminus \{u\}: \\ X_w = \1}}\tp{\sum_{h \in [k]}\frac{\phi_{w_h}}{k}} \\
  &\geq \frac{1}{Z_{\*\phi}} \sum_{X \in \Omega(\mu)} \mu(X)  \mathds{1}[X_u = \1]\tp{\sum_{h \in [k]\setminus \{i\}}\frac{\phi_{u_h}}{k}}  \prod_{\substack{w \in V\setminus \{u\}:\\ X_w = \1}}\tp{\sum_{h \in [k]}\frac{\phi_{w_h}}{k}}\\
  &= \frac{1}{Z_{\*\phi}}  \tp{\sum_{h \in [k]\setminus \{i\}}\frac{\phi_{u_h}}{k}} \sum_{\substack{X \in \Omega(\mu):\\ X_u = 1}} \mu(X) \prod_{\substack{w \in V\setminus \{u\}:\\ X_w = \1}}\tp{\sum_{h \in [k]}\frac{\phi_{w_h}}{k}}\\
    &= \frac{1}{Z_{\*\phi}}  \tp{\frac{\sum_{h \in [k] \setminus \{i\}} \phi_{u_h}}{\phi_{u_j}} } \sum_{\substack{X \in \Omega(\mu):\\ X_u = 1}}  \frac{\mu(X)\phi_{u
    _j}}{k}\prod_{\substack{w \in V\setminus \{u\}:\\ X_w = \1}}\tp{\sum_{h \in [k]}\frac{\phi_{w_h}}{k}}\\
  &= \frac{\sum_{h \in [k] \setminus \{i\}} \phi_{u_h}}{\phi_{u_j}} \Pr[Y \sim \pi_k]{ Y_{u_j} = \1} = \frac{\sum_{h \in [k] \setminus \{i\}} \phi_{u_h}}{\phi_{u_j}} \Pr[Y \sim \pi_k]{Y_{u_i} = \0 \land Y_{u_j} = \1},
\end{align*}
where the last equation holds because if $Y_{u_j} = \1$, then $Y_{u_{i}} = \0$.
This implies that  
\begin{align} \label{eq:k-trans-inf-8}
  \pi^{u_i\gets \0}_{k, u_j}(\1) = \frac{\Pr[Y \sim \pi_k]{Y_{u_i} = \0 \land Y_{u_j} = \1}}{\Pr[Y \sim \pi_k]{Y_{u_i} = \0}} \leq \frac{\phi_{u_j}}{\sum_{h \in [k] \setminus \{i\}} \phi_{u_h}}.
\end{align}

\section{Magnetized Block Factorization of Entropy} \label{sec:FBF}
In this section, we prove \Cref{lem:FBF}.
Let $\mu$ be a distribution over $\{\0, \1\}^V$.
Let $\theta \in (0, 1)$ be a real number, recall that $\pi = \mu^{\tp{\theta}}$ is a distribution and $Z_\pi$ is the normalization factor defined in \eqref{eq-redef-pi}.
Recall that we use $\Bin(V, 1 - \theta)$ to denote the distribution of subset $\+R \subseteq V$ that is randomly generated by including each $v \in V$ into $R$ independently with probability $1 - \theta$.
Let $\eta > 0$.
Suppose $\mu_k$ satisfies the $\lceil \theta kn \rceil$-uniform block factorization with constant $C$ for all integers $k \geq k_0$.
To  prove \Cref{lem:FBF}, we need to show that for any $f: \{\0, \1\}^V \to \mathds{R}_{\geq 0}$, it holds that
 \begin{align*}
    \Ent[\mu]{f} &\leq C \cdot\frac{Z_\pi}{\theta^{\abs{V}}} \E[R\sim \Bin(V, 1 - \theta)]{\pi_R\tp{\mathds{1}_R} \cdot \Ent[\pi^{\mathds{1}_R}]{f}},
 \end{align*}
 where we assume $\pi_R\tp{\mathds{1}_R} \cdot \Ent[\pi^{\mathds{1}_R}]{f} = 0$ if $\pi_R\tp{\mathds{1}_R} = 0$.

Fix an integer $k \geq k_0$ and a function $f:\{\0,\1\}^V \to \mathds{R}_{\geq 0}$.
We define a new function $f^k: \{\0, \1\}^{V_k} \to \mathds{R}_{\geq 0}$ as
\begin{align*}
  \forall \sigma \in \{\0, \1\}^{V_k}, \quad f^k(\sigma) = f(\sigma^\star).
\end{align*}
Recall that 
\begin{align*}
\forall v \in V,\quad \sigma^\star_v = \begin{cases}
 \1 & \exists i \in [k] \text{ s.t. } \sigma_{(v,i)} = \1;\\
 \0 & \forall i \in [k], \sigma_{(v,i)} = \0.
 \end{cases}
\end{align*}
Since $\mu_k$ satisfies the $\lceil \theta kn \rceil$-uniform block factorization with parameter $C$, by definition, we have
\begin{align*}
\Ent[\mu_k]{f^{k}} \leq \frac{C}{\binom{nk}{\lceil \theta kn \rceil}} \sum_{S \in \binom{V_k}{\lceil \theta kn \rceil}}\mu_k\left[\Ent[S]{f^{k}}\right].
\end{align*}

Note that above inequality holds for all $k \geq k_0$.
\Cref{lem:FBF} is a straightforward consequence of the following two results
\begin{align}
\Ent[\mu_k]{f^{k}} &= \Ent[\mu]{f} \label{eq-limit-1};\\
\lim_{k \to \infty} \frac{C}{\binom{nk}{\lceil \theta kn \rceil}} \sum_{S \in \binom{V_k}{\lceil \theta kn \rceil}}\mu_k\left[\Ent[S]{f^{k}}\right]  &= C \cdot\frac{Z_\pi}{\theta^{\abs{V}}} \E[R\sim \Bin(V, 1 - \theta)]{\pi_R\tp{\mathds{1}_R} \cdot \Ent[\pi^{\mathds{1}_R}]{f}}.\label{eq-limit-2}
\end{align}
In the rest of section, we verify~\eqref{eq-limit-1} and~\eqref{eq-limit-2}.

\subsection{Verification of~\eqref{eq-limit-1}}
By definition, it holds that
\begin{align*}
\Ent[\mu_k]{f^k} = \sum_{\sigma \in \Omega(\mu_k)}\mu_k(\sigma) f^k(\sigma)\log f^k(\sigma)	- \sum_{\sigma \in \Omega(\mu_k)}\mu_k (\sigma)f^k(\sigma) \log \sum_{\tau \in \Omega(\mu_k)}\mu_k(\tau) f^k(\tau).
\end{align*}
Note that
\begin{align*}
 \sum_{\sigma \in \Omega(\mu_k)}\mu_k(\sigma) f^k(\sigma)\log f^k(\sigma) &= \sum_{\alpha \in \Omega(\mu)}\sum_{\sigma \in \Omega(\mu_k):\sigma^\star = \alpha}\mu_k(\sigma)f(\alpha)\log f(\alpha)= \sum_{\alpha \in \Omega(\mu)}\mu(\alpha)f(\alpha)\log f(\alpha);\\
 \sum_{\sigma \in \Omega(\mu_k)}\mu_k(\sigma)f^k(\sigma) &= \sum_{\alpha \in \Omega(\mu)} \sum_{\sigma \in \Omega(\mu_k):\sigma^\star = \alpha}\mu_k(\sigma)f(\alpha) = \sum_{\alpha \in \Omega(\mu)}\mu(\alpha)f(\alpha).
\end{align*}
This implies the equation $\Ent[\mu_k]{f^k} = \Ent[\mu]{f}$.

\subsection{Verification of~\eqref{eq-limit-2}}

We introduce some notations. Let $\*\varphi \in [0, 1]^V$ be a vector. For any subset $\Lambda \subseteq V$, we use $\*\varphi_\Lambda \in [0,1]^V$ denote the vector induced by restricting $\*\varphi$ on $\Lambda$.
Let $\mu^{(\*\varphi_\Lambda)}$ denote the distribution obtained by imposing local fields $\*\varphi_\Lambda$ on $\mu$, formally
\begin{align*}
\forall \sigma \in \{\0,\1\}^V,\quad \mu^{(\*\varphi_\Lambda)}(\sigma) = \frac{\mu(\sigma)}{Z(\*\varphi_\Lambda)}\prod_{\substack{v \in \Lambda:\\ \sigma_v = \1}}\*\varphi_\Lambda(v), \quad\text{where } Z(\*\varphi_\Lambda)\triangleq \sum_{\tau \in \{\0,\1\}^V}\mu(\tau)\prod_{\substack{v \in \Lambda:\\ \tau_v = \1}}\*\varphi_\Lambda(v). 	
\end{align*}
The above definition can be viewed as a generalization of \Cref{definition-local-fields}.
The local fields $\*\varphi_\Lambda$ can only be defined on a subset $\Lambda$. 
It is easy to see $\mu^{(\*\varphi_\Lambda)} = \mu^{(\*\varphi_\Lambda \cup \mathds{1}_{V \setminus \Lambda})}$.
Besides, we allow $\*\varphi_\Lambda(v) = 0$ for some vertices $v \in V$. The distribution $\mu^{(\*\varphi_\Lambda)}$ is well-defined if and only if $Z(\*\varphi_\Lambda) > 0$.

Fix an integers $k \geq 1$ and $1\leq \ell \leq kn$. 
We have the following lemma. 

\begin{lemma} \label{lem:PBF} 
Let $C > 0$ and $k \geq 1$ and $1\leq \ell \leq kn$ be two integers. 
  If $\mu_k$ satisfies the $\ell$-uniform block factorization with constant $C$, then for any $f: \{\0, \1\}^V \to \mathds{R}_{\geq 0}$, let
  \begin{align*}
  H_f(\ell,k) \triangleq \frac{1}{\binom{nk}{\ell}} \sum_{S \in \binom{V_k}{\ell}}\mu_k\left[\Ent[S]{f^{k}}\right],
  \end{align*}
it holds that 
\begin{align*}
  H_f(\ell,k) =  \sum_{\*b: k\*b \in \Omega(\HyperGeo)}\HyperGeo(k\*b) \sum_{\tau \in \Omega(\mu)} \mu(\tau) \sum_{\substack{R \subseteq \tau^{-1}(\1):\\ \forall v \in \tau^{-1}(\1)\setminus R,\; b_v > 0}}   \prod_{u \in R}(1-b_u)\prod_{v \in \tau^{-1}(\1) \setminus R}b_v \cdot \Ent[\mu^{(\*b_{V \setminus R }),\mathds{1}_R}]{f}.
\end{align*}
where  $\mathds{1}_R$ is the all-($\1$) configuration on $R$ and $\tau^{-1}(\1)= \{v \in V \mid \tau_v = \1\}$, the distribution $\mu^{(\*b_{V \setminus R }),\mathds{1}_R}$ is obtained from $\mu^{(\*b_{V \setminus R })}$ conditional on $\mathds{1}_R$, and $ \HyperGeo(\cdot)$ is the multivariate hypergeometric distribution in~\eqref{eq-hypergeo}.
\end{lemma}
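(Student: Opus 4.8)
The plan is to fix an integer $k\ge k_0$ and first evaluate $\mu_k\left[\Ent_S{f^k}\right]$ for a single block $S\in\binom{V_k}{\ell}$, and then average over $S$. Because $\mu_k=\Rd(\mu,k)$ is invariant under permuting the $k$ coordinates within any bucket $\{v\}\times[k]$, and $f^k(\sigma)=f(\sigma^\star)$ depends on a configuration only through $\sigma^\star$, the number $\mu_k\left[\Ent_S{f^k}\right]$ depends on $S$ only through the profile $\*a=\*a(S)$ given by $a_v=\abs{S\cap(\{v\}\times[k])}$. Grouping the $\binom{nk}{\ell}$ blocks by this profile — there are $\prod_{v}\binom{k}{a_v}$ blocks realizing a given profile $\*a$ — and using the formula \eqref{eq-hypergeo} for $\HyperGeo(\*a)$, I obtain $H_f(\ell,k)=\sum_{\*b\,:\,k\*b\in\Omega(\HyperGeo)}\HyperGeo(k\*b)\cdot\mu_k\left[\Ent_{S}{f^k}\right]$, where $S$ is any fixed block of profile $\*a=k\*b$ (so $b_v=a_v/k$). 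This already produces the outer sum of the claimed identity, so it remains to show that $\mu_k\left[\Ent_{S}{f^k}\right]$ equals the inner double sum over $\tau$ and $R$.

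For such a fixed $S$, I would expand $\mu_k\left[\Ent_S{f^k}\right]=\sum_{\sigma}(\mu_k)_{V_k\setminus S}(\sigma)\cdot\Ent_{\mu_k^{\sigma}}{f^k}$ over configurations $\sigma$ on $V_k\setminus S$ in the support of the marginal, and classify each bucket $v$ relative to $\sigma$: it is \emph{forced-}$(\1)$ if $\sigma$ carries a $\1$ on some coordinate of $\{v\}\times[k]$ lying outside $S$; \emph{forced-}$(\0)$ if $a_v=0$ and $\sigma\equiv\0$ on all of $\{v\}\times[k]$; and \emph{active} if $a_v\ge 1$ and $\sigma\equiv\0$ on the part of $\{v\}\times[k]$ outside $S$. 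Writing $F,N,A$ for these three sets, we have $V=F\sqcup N\sqcup A$. Using $\mu_k(Y)=\mu(Y^\star)\,k^{-\norm{Y^\star}_{+}}$ for $Y\in\Omega(\mu_k)$, a direct computation — summing over the $a_v$ admissible positions of the unique $\1$ inside each active bucket that is set to $\1$ — shows that the image of $\mu_k^{\sigma}$ under $Y'\mapsto\{v\in A\mid (Y')^\star_v=\1\}$ is the law $\nu$ on subsets $R\subseteq A$ with $\nu(R)\propto\mu(X_R)\prod_{v\in R}b_v$, where $X_R$ is the configuration on $V$ equal to $\1$ on $F\cup R$ and $\0$ on $N\cup(A\setminus R)$; since $f^k(Y')$ depends on $Y'$ only through this set $R$, this gives $\Ent_{\mu_k^{\sigma}}{f^k}=\Ent_{\nu}{f}=\Ent_{\mu^{(\*b_{V\setminus F}),\,\mathds{1}_F}}{f}$, after identifying $\nu$ with $\mu$ carrying the partial local fields $\*b_{V\setminus F}$ (whose zero values on the $N$-buckets pin those to $\0$) and then conditioned on $\mathds{1}_F$. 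Notice this value depends on $\sigma$ only through $F$.

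The same computation gives $(\mu_k)_{V_k\setminus S}(\sigma)=k^{-\abs{F}}\sum_{X\,:\,X|_F\equiv\1}\mu(X)\prod_{v\notin F,\,X_v=\1}b_v$, which again depends on $\sigma$ only through $F$, not through the coordinate choices inside the forced-$(\1)$ buckets. I would then sum over the $\prod_{v\in F}(k-a_v)$ configurations $\sigma$ that share a given $F$ (each contributing the same marginal weight and the same entropy), which replaces $k^{-\abs{F}}\prod_{v\in F}(k-a_v)$ by $\prod_{v\in F}(1-b_v)$; after that, swapping the order of summation via $\sum_{F}\sum_{X\,:\,X|_F\equiv\1}=\sum_{\tau}\mu(\tau)\sum_{F\subseteq\tau^{-1}(\1)}$ and observing that $\{v\notin F\mid X_v=\1\}=\tau^{-1}(\1)\setminus F$, I get $\mu_k\left[\Ent_S{f^k}\right]=\sum_{\tau}\mu(\tau)\sum_{F\subseteq\tau^{-1}(\1)}\prod_{u\in F}(1-b_u)\prod_{v\in\tau^{-1}(\1)\setminus F}b_v\cdot\Ent_{\mu^{(\*b_{V\setminus F}),\mathds{1}_F}}{f}$. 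The factor $\prod_{v\in\tau^{-1}(\1)\setminus F}b_v$ vanishes unless $b_v>0$ for every $v\in\tau^{-1}(\1)\setminus F$ (an $\1$-bucket outside $F$ must have its $\1$ inside $S$), so restricting the range of $F$ accordingly changes nothing; renaming $F$ as $R$ and combining with the first paragraph yields the claimed identity.

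The main obstacle is the bookkeeping in the middle two steps: computing simultaneously the conditional distribution $\mu_k^{\sigma}$ restricted to $S$ and the marginal $(\mu_k)_{V_k\setminus S}(\sigma)$, keeping precise track of the $1/k$ weights coming from the $k$-transformation and of the number $k-a_v$ of admissible coordinate choices in each forced-$(\1)$ bucket, and then verifying that the reweighting of $\mu$ that emerges is described exactly by the partial local fields $\*b_{V\setminus R}$ together with the pinning $\mathds{1}_R$. This includes handling the degenerate values $b_v\in\{0,1\}$ (buckets entirely outside, resp.\ entirely inside, $S$) and the case $A=\emptyset$, where $\mu_k^{\sigma}$ restricted to $S$ and the distribution $\mu^{(\*b_{V\setminus R}),\mathds{1}_R}$ both degenerate to point masses and both sides of the identity are read through the stated conventions for $\Ent$ and for conditioning.
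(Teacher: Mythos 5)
Your proposal is correct and follows essentially the same route as the paper: decompose the uniform block $S$ by its bucket profile to produce the hypergeometric outer sum, identify $\Ent[\mu_k^{\rho}]{f^k}$ with $\Ent[\mu^{(\*b_{V\setminus R}),\mathds{1}_R}]{f}$, and compute the probability that the forced-$(\1)$ set equals $R$ as $\prod_{u\in R}(1-b_u)\prod_{v\in\tau^{-1}(\1)\setminus R}b_v$. The only cosmetic difference is that you derive the conditional-distribution identity (the paper's Claim~\ref{claim-Ent}, via \eqref{eq-CFYZ}) directly by a pushforward computation rather than citing \cite[Lemma~6.2]{chen2021rapid}, and you organize the sum over partial configurations on $V_k\setminus S$ rather than over full configurations in $\Omega(\mu_k)$.
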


The above lemma relates $H_f(\ell,k)$ to the multivariate hypergeometric distribution. 
By the concentration property, we have the following lemma.
\begin{lemma} \label{lem:LBF} 
  Let $\theta \in (0, 1)$ and $\pi = \mu^{(\theta)}$.
  Let  $f: \{\0, \1\}^V \to \mathds{R}_{\geq 0}$ be a function. It holds that 
  \begin{align*}
    \lim_{k \to \infty} H_f\tp{ \ctp{\theta n k}, k}
    =\frac{Z_\pi}{\theta^{\abs{V}}} \E[R\sim \Bin(V, 1 - \theta)]{\pi_R\tp{\mathds{1}_R} \cdot \Ent[\pi^{\mathds{1}_R}]{f}}.
  \end{align*}
\end{lemma}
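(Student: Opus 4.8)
The plan is to start from the explicit formula for $H_f(\ell,k)$ given by \Cref{lem:PBF}, specialize $\ell$ to $\ell_k\triangleq\ctp{\theta nk}$, and take $k\to\infty$, exploiting the concentration of the multivariate hypergeometric distribution around its mean. Writing $\*b=\*a/k$, \Cref{lem:PBF} presents $H_f(\ell_k,k)=\E[\*a\sim\Pi_{V,k,\ell_k}]{\Phi_k(\*a)}$ with
\begin{align*}
\Phi_k(\*a)\triangleq\sum_{\tau\in\Omega(\mu)}\mu(\tau)\sum_{\substack{R\subseteq\tau^{-1}(\1):\\\forall v\in\tau^{-1}(\1)\setminus R,\,a_v>0}}\ \prod_{u\in R}(1-b_u)\prod_{v\in\tau^{-1}(\1)\setminus R}b_v\cdot\Ent[\mu^{(\*b_{V\setminus R}),\mathds{1}_R}]{f},
\end{align*}
a finite sum over the finitely many pairs $(\tau,R)$ with $\tau\in\Omega(\mu)$ and $R\subseteq\tau^{-1}(\1)$. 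In every such pair $\sigma=\tau\in\Omega(\mu)$ satisfies $R\subseteq\sigma^{-1}(\1)$, so $\pi_R(\mathds{1}_R)>0$ and $\pi^{\mathds{1}_R}$ is well defined; moreover $0\le\Ent[\nu]{f}\le M$ for every distribution $\nu$ on $\Omega(\mu)$, where $M$ depends only on $f$ (here $\Omega(\mu)$ is finite), so $0\le\Phi_k\le NM$ pointwise, $N$ being the number of pairs.

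The core step is to evaluate $\lim_{k\to\infty}\E{\Phi_k(\*a)}$. Let $\*\theta=(\theta,\dots,\theta)\in(0,1)^V$. Since $\ell_k/(kn)\to\theta$, \Cref{lemma:hypergometric-concentration} together with a union bound over the $n$ coordinates gives, for every fixed $\epsilon\in(0,1)$ and all large $k$, $\Pr[\*a\sim\Pi_{V,k,\ell_k}]{\norm{\*b-\*\theta}_\infty\ge\epsilon}\le 2n\exp(-\epsilon^2 k/2)\to 0$. On the complementary good event, all $b_v\in(\theta-\epsilon,\theta+\epsilon)\subseteq(0,1)$ (for $\epsilon$ small), hence no term of $\Phi_k$ is dropped and $\Phi_k(\*a)=\sum_{\tau,R}g_{\tau,R}(\*b)$, where $g_{\tau,R}(\*b)\triangleq\prod_{u\in R}(1-b_u)\prod_{v\in\tau^{-1}(\1)\setminus R}b_v\cdot\Ent[\mu^{(\*b_{V\setminus R}),\mathds{1}_R}]{f}$ is continuous on $(0,1)^V$: the unnormalized weights defining $\mu^{(\*b_{V\setminus R}),\mathds{1}_R}$ are polynomials in $\*b$, the normalizer is continuous and strictly positive there (since $\mu^{(\*b_{V\setminus R})}$ has full support $\Omega(\mu)$), and entropy depends continuously on the probability vector. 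A direct computation gives $\mu^{(\*\theta_{V\setminus R}),\mathds{1}_R}=\pi^{\mathds{1}_R}$: for $\sigma\in\Omega(\mu)$ with $R\subseteq\sigma^{-1}(\1)$ one has $\mu^{(\*\theta_{V\setminus R})}(\sigma)\propto\mu(\sigma)\theta^{\|\sigma\|_{+}-|R|}\propto\mu(\sigma)\theta^{\|\sigma\|_{+}}\propto\pi(\sigma)$. A standard $\epsilon$-argument — uniform continuity of each $g_{\tau,R}$ on the compact ball $\{\norm{\*b-\*\theta}_\infty\le\epsilon\}$, plus the uniform bound on $\Phi_k$ and the vanishing bad-event probability — then yields
\begin{align*}
\lim_{k\to\infty}H_f(\ell_k,k)=\sum_{\tau\in\Omega(\mu)}\mu(\tau)\sum_{R\subseteq\tau^{-1}(\1)}(1-\theta)^{|R|}\theta^{\|\tau\|_{+}-|R|}\,\Ent[\pi^{\mathds{1}_R}]{f}.
\end{align*}

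It remains to identify this limit with the target expression. Exchanging the two finite sums and using $\sum_{\tau\in\Omega(\mu):\,R\subseteq\tau^{-1}(\1)}\mu(\tau)\theta^{\|\tau\|_{+}}=Z_\pi\Pr[\sigma\sim\pi]{\forall v\in R,\ \sigma_v=\1}=Z_\pi\,\pi_R(\mathds{1}_R)$, the limit equals $Z_\pi\sum_{R\subseteq V}\bigl(\tfrac{1-\theta}{\theta}\bigr)^{|R|}\pi_R(\mathds{1}_R)\Ent[\pi^{\mathds{1}_R}]{f}$; on the other hand $\frac{Z_\pi}{\theta^{|V|}}\E[R\sim\Bin(V,1-\theta)]{\pi_R(\mathds{1}_R)\Ent[\pi^{\mathds{1}_R}]{f}}=\frac{Z_\pi}{\theta^{|V|}}\sum_{R\subseteq V}(1-\theta)^{|R|}\theta^{|V|-|R|}\pi_R(\mathds{1}_R)\Ent[\pi^{\mathds{1}_R}]{f}$, which is the same quantity (the $R$ with $\pi_R(\mathds{1}_R)=0$ contribute $0$ on both sides by convention). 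The main obstacle is the core step: justifying the interchange of the $k\to\infty$ limit with the hypergeometric sum whose support grows with $k$. This hinges on correctly pinpointing the concentration point $\*\theta$, establishing continuity of the conditional-entropy functional $\*b\mapsto\Ent[\mu^{(\*b_{V\setminus R}),\mathds{1}_R}]{f}$ near $\*\theta$, and controlling the degenerate terms where some coordinate of $\*a$ vanishes; the needed tail bound is exactly \Cref{lemma:hypergometric-concentration}, and the closing algebra is routine.
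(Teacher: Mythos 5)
Your proposal is correct and follows essentially the same route as the paper's proof: concentration of the multivariate hypergeometric vector $\*b$ around $\*\theta$ via \Cref{lemma:hypergometric-concentration} and a union bound, continuity and boundedness of the summand $H(\*b)$ to pass to the limit, and then the identification $\mu^{(\*\theta_{V\setminus R}),\mathds{1}_R}=\pi^{\mathds{1}_R}$ followed by the sum exchange $\sum_{\tau:\,\tau_R=\mathds{1}_R}\mu(\tau)\theta^{\|\tau\|_+}=Z_\pi\,\pi_R(\mathds{1}_R)$. Your treatment of the continuity of the conditional-entropy functional and of the degenerate terms with $b_v=0$ is if anything slightly more explicit than the paper's.
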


Equation~\eqref{eq-limit-2} can be verified by combining~\Cref{lem:PBF} and \Cref{lem:LBF}.
We verify these two lemmas in the rest of this section.

\subsection{Proof of \Cref{lem:PBF}}
For any subset $\Lambda \subseteq V_k$, we use $\mu_{k,\Lambda}$ to denote the marginal distribution on $\Lambda$ induced from $\mu_k$.
By definition, we have
\begin{align*}
  H_f(\ell,k)
  &= \frac{1}{\binom{kn}{\ell}} \cdot \sum_{S \in \binom{V_k}{\ell}} \sum_{\tau \in \Omega(\mu_{k, V_k\setminus S})} \mu_{k, V_k\setminus S}(\tau) \cdot \Ent[\mu^{\tau}_k]{f^k}\\
  & = \frac{1}{\binom{kn}{\ell}} \cdot \sum_{\sigma \in \Omega(\mu_k)} \mu_k(\sigma)\sum_{S \in \binom{V_k}{\ell}} \Ent[\mu^{\rho}_k]{f^k}, \quad\text{where } \rho = \rho(\sigma, S) \triangleq \sigma_{V_k \setminus S}.
\end{align*}
Recall that $V_k = V \times [k]$. For each $v \in V$, define $C_v = \{v_1,v_2,\ldots,v_k\}$. 
For configuration $\sigma \in \Omega(\mu_k)$, define 
\begin{align*}
F(\sigma) \triangleq \{v \in V \mid \exists\, i \in [k] \text{ s.t. } \sigma_{v_i} = \1 \}.	
\end{align*}
Similarly, for partial configuration $\rho = \sigma_{V_k \setminus S}$, this definition could be extended as
\begin{align*}
F(\rho)  \triangleq \{v \in V \mid \exists\, i \in [k] \text{ s.t. } v_i \in V_k \setminus S \land \sigma_{v_i} = \1 \}.	
\end{align*}
Next, we define local fields $\*\phi_\rho \in [0,1]^{V \setminus F(\rho)}$ such that
\begin{align*}
\forall v \in V \setminus F(\rho), \quad 	\phi_\rho(v) \triangleq \frac{|S \cap C_v|}{k}.
\end{align*}
\begin{claim}
\label{claim-Ent}
It holds that  $\Ent[\mu^\rho_k]{f^k} = \Ent[\mu^{(\*\phi_\rho), \mathds{1}_{F(\rho)}}]{f}$.
\end{claim}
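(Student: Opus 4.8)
The claim asserts an identity between two entropies: the entropy of $f^k$ under the conditional distribution $\mu_k^\rho$ (living on $\{\0,\1\}^{V_k}$, conditioned on the partial configuration $\rho = \sigma_{V_k\setminus S}$), and the entropy of $f$ under the distribution $\mu^{(\*\phi_\rho),\mathds{1}_{F(\rho)}}$ (living on $\{\0,\1\}^V$, obtained from $\mu$ by imposing the fractional local fields $\*\phi_\rho$ on $V\setminus F(\rho)$ and pinning the vertices of $F(\rho)$ to $\1$). The natural approach is to exhibit a measure-preserving, $f$-compatible correspondence between the two conditional spaces. Concretely, I would first analyze the structure of $\mu_k^\rho$: the partial configuration $\rho$ already reveals, for each column $C_v$, whether some revealed coordinate $v_i\in V_k\setminus S$ carries a $\1$ (in which case $v\in F(\rho)$ and $\sigma^\star_v=\1$ is forced), or all revealed coordinates are $\0$. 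For $v\in F(\rho)$, the pin $\rho$ already forces $\sigma^\star_v=\1$ deterministically, which matches the $\mathds{1}_{F(\rho)}$ pinning on the right-hand side. For $v\notin F(\rho)$, the value $\sigma^\star_v$ depends only on whether the free coordinates $C_v\cap S$ (there are $|S\cap C_v|=k\phi_\rho(v)$ of them) contain the single $\1$ that the $k$-transformation places in column $C_v$ when $X_v=\1$.

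The key computation is then to push the distribution $\mu_k^\rho$ forward under the map $\sigma\mapsto\sigma^\star$ and show it equals $\mu^{(\*\phi_\rho),\mathds{1}_{F(\rho)}}$. By the definition of the $k$-transformation (Definition~\ref{def:k-trans}), conditioned on the underlying $X\sim\mu$ with $X_v=\1$, the unique $\1$ in column $C_v$ is uniform over $[k]$; conditioning on $\rho$ (which for $v\notin F(\rho)$ says none of the revealed coordinates $C_v\setminus S$ is that $\1$) restricts it to be uniform over $C_v\setminus(C_v\setminus S) = C_v\cap S$, an event of probability $|C_v\cap S|/k = \phi_\rho(v)$ — provided $C_v\cap S\neq\varnothing$; if $C_v\cap S=\varnothing$ then $v\notin F(\rho)$ is impossible unless $X_v=\0$, forcing $\sigma^\star_v=\0$ (captured by $\phi_\rho(v)=0$ killing the $\1$-term). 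Summing over the internal randomness of which coordinate in $C_v\cap S$ is hit, and over $X$, one gets that for any $\alpha\in\{\0,\1\}^V$ with $\alpha_v=\1$ for all $v\in F(\rho)$,
\[
\Pr[\sigma\sim\mu_k^\rho]{\sigma^\star = \alpha} \;\propto\; \mu(\alpha)\prod_{v\in V\setminus F(\rho):\,\alpha_v=\1}\phi_\rho(v),
\]
which is exactly $\mu^{(\*\phi_\rho),\mathds{1}_{F(\rho)}}(\alpha)$ after normalization. Since $f^k(\sigma)=f(\sigma^\star)$ by definition, and entropy $\Ent[\nu]{g}$ depends only on the pushforward of $\nu$ under any map $h$ when $g = g'\circ h$ (i.e. $\Ent[\nu]{g'\circ h} = \Ent[h_\#\nu]{g'}$), the claimed identity follows immediately.

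The main obstacle is bookkeeping rather than conceptual: one must carefully handle the degenerate columns where $C_v\cap S=\varnothing$ (so $\phi_\rho(v)=0$) and confirm that the well-definedness condition $Z(\*\phi_\rho)>0$ holds — this is guaranteed because $\rho$ arises from an actual $\sigma\in\Omega(\mu_k)$, so $\sigma^\star$ is in the support. I would also be careful that the normalization constants on both sides agree: the pushforward computation gives proportionality, and both measures are probability distributions, so equality is automatic. Finally, I would record that $\Ent[\nu]{g'\circ h}=\Ent[h_\#\nu]{g'}$ is the elementary fact that $\sum_y (h_\#\nu)(y)\,g'(y)\log g'(y) = \sum_x \nu(x)\,g'(h(x))\log g'(h(x))$ together with the corresponding identity for $\E[\nu]{g'\circ h}$, so both terms in the definition of entropy transfer. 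This completes the proof of the claim.
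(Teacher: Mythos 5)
Your proposal is correct and follows essentially the same two-step route as the paper: establish that the pushforward of $\mu_k^\rho$ under $\sigma\mapsto\sigma^\star$ is exactly $\mu^{(\*\phi_\rho),\mathds{1}_{F(\rho)}}$, then transfer both terms in the definition of entropy through this map using $f^k=f\circ(\cdot)^\star$. The only difference is that the paper imports the pushforward identity from \cite[Lemma~6.2]{chen2021rapid} while you sketch its direct derivation from the structure of the $k$-transformation (including the degenerate columns with $C_v\cap S=\varnothing$), which is a sound and self-contained substitute.
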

By~\Cref{claim-Ent}, we have 
\begin{align}
\label{eq-Enf-rho}
H_f(\ell,k)  = \frac{1}{\binom{kn}{\ell}} \cdot \sum_{\sigma \in \Omega(\mu_k)} \mu_k(\sigma)\sum_{S \in \binom{V_k}{\ell}} \Ent[\mu^{(\*\phi_\rho), \mathds{1}_{F(\rho)}}]{f}, \quad\text{where } \rho = \rho(\sigma, S) \triangleq \sigma_{V_k \setminus S}.
\end{align}
Note that $S$ is sampled from $\binom{V_k}{\ell}$ uniformly at random. Let $\*a = (a_v)_{v \in V}$, where $a_v = |S \cap C_v|$. Vector $\*a$ follows the distribution multivariate hypergeometric distribution $\HyperGeo$. We can use the following procedure to generate the random set $S$: sample $\*a \sim \HyperGeo$; sample $S_v \in \binom{C_v}{a_v}$ uniformly at random and let $S = \cup_{v \in V}S_v$.
Note that $F(\rho) \subseteq F(\sigma)$. Let $\*b  = \*a /k$, so that $\phi_\rho(v) = b_v$ for all $v \in V \setminus F(\rho)$.
By letting $R = F(\rho)$,
we rewrite equation~\eqref{eq-Enf-rho} as 
\begin{align*}
H_f(\ell,k)  =  \sum_{\*b: k\*b \in \Omega(\HyperGeo)}\HyperGeo(k\*b) \sum_{\sigma \in \Omega(\mu_k)} \mu_k(\sigma)	\sum_{\substack{R \subseteq F(\sigma):\\ \forall v \in F(\sigma)\setminus R,\; b_v > 0}}   \prod_{u \in R}(1-b_u)\prod_{v \in F(\sigma) \setminus R}b_v \cdot \Ent[\mu^{(\*b_{V \setminus R }),\mathds{1}_R}]{f}.
\end{align*}
Here, vector $k\*b$ determines the size of each $S_v$.
Then, for each $R\subseteq F(\sigma)$, the following part of above equation specifies the probability that $F(\rho)=R$:
\begin{itemize}
\item when $u \in R$, the $\1$-vertex in $C_u$ should not be selected into $S_u$, this happens with probability $1 - b_u$;
\item when $u \in F(\sigma) \setminus R$, the $\1$-vertex in $C_u$ must be selected into $S_u$, so it happens with probability $b_u$.
\end{itemize}
If there exists $v \in F(\sigma) \setminus R$ such that $b_v = 0$, then by above argument, we know that $\Pr{R = F(\rho)} = 0$.
Since it is no need to enumerate this kind of $R$, we add a constraint for $R$ which says $\forall v \in F(\sigma) \setminus R, \; b_v > 0$.
This constraint for enumerating $R$ also ensures that $\mu^{(\*b_{V \setminus R }),\mathds{1}_R}$ will always be well defined.


To generate a random configuration $\sigma \sim \mu_k$, one can sample $\tau \sim \mu$, then transform $\tau$ to $\sigma$ by \Cref{def:k-trans}.
It turns out that $F(\sigma)$ can be uniquely determined by $\tau$, because $\tau(v) = \1$ if and only if $v \in F(\sigma)$.
To enumerate all configurations $\sigma \in \Omega(\mu_k)$, we first enumerates all configurations $\tau \in \Omega(\mu)$, then enumerate all $\sigma$ such that $\sigma^\star = \tau$.
We have
\begin{align*}
  &H_f(\ell,k) \\
  &=  \sum_{\*b: k\*b \in \Omega(\HyperGeo)}\HyperGeo(k\*b) \sum_{\tau \in \Omega(\mu)} \sum_{\substack{R \subseteq \tau^{-1}(\1):\\ \forall v \in \tau^{-1}(\1)\setminus R,\; b_v > 0}}   \prod_{u \in R}(1-b_u)\prod_{v \in \tau^{-1}(\1) \setminus R}b_v \cdot \Ent[\mu^{(\*b_{V \setminus R }),\mathds{1}_R}]{f} \cdot \sum_{\substack{\sigma \in \Omega(\mu_k):\\ \sigma^\star = \tau}} \mu_k(\sigma)	
\end{align*}
By definition, it is easy to verify that
\begin{align*}
  \mu(\tau) &= \sum_{\substack{\sigma \in \Omega(\mu_k):\\ \sigma^\star = \tau}} \mu_k(\sigma).
\end{align*}
This implies that
\begin{align*}
  H_f(\ell,k) =  \sum_{\*b: k\*b \in \Omega(\HyperGeo)}\HyperGeo(k\*b) \sum_{\tau \in \Omega(\mu)} \mu(\tau) \sum_{\substack{R \subseteq \tau^{-1}(\1):\\ \forall v \in \tau^{-1}(\1)\setminus R,\; b_v > 0}}   \prod_{u \in R}(1-b_u)\prod_{v \in \tau^{-1}(\1) \setminus R}b_v \cdot \Ent[\mu^{(\*b_{V \setminus R }),\mathds{1}_R}]{f}.
\end{align*}
This proves \Cref{lem:PBF}.

\begin{proof}[Proof of \Cref{claim-Ent}]

By \cite[Lemma 6.2]{chen2021rapid}, it is straightforward to verify
\begin{align}
\label{eq-CFYZ}
\forall \xi \in \Omega(\mu),\quad
\mu^{(\*\phi_\rho),\mathds{1}_{F(\rho)}}(\xi) = \sum_{{\tau \in \Omega(\mu_k):\tau^\star = \xi}}\mu^\rho_k(\tau).
\end{align}
By definition, it holds that
\begin{align*}
\Ent[\mu^\rho_k]{f^k} = \sum_{\sigma \in \Omega(\mu_k)}\mu^\rho_k(\sigma) f^k(\sigma)\log f^k(\sigma)	- \sum_{\sigma \in \Omega(\mu_k)}\mu^\rho_k (\sigma)f^k(\sigma) \log \sum_{\tau \in \Omega(\mu_k)}\mu^\rho_k(\tau) f^k(\tau).
\end{align*}
In above equation, we enumerate all $\sigma \in \Omega(\mu_k)$. Since $\Omega(\mu_k^\rho)\subseteq \Omega(\mu_k)$, the above equation is correct.
By~\eqref{eq-CFYZ}, it holds that 
\begin{align*}
 &\sum_{\sigma \in \Omega(\mu_k)}\mu^\rho_k(\sigma) f^k(\sigma)\log f^k(\sigma) = 
 \sum_{\alpha \in \Omega(\mu)}\sum_{\sigma \in \Omega(\mu_k):\sigma^\star = \alpha} \mu^\rho_k(\sigma) f^k(\sigma)\log f^k(\sigma) \\
 =\,&  \sum_{\alpha \in \Omega(\mu)}f(\alpha)\log f(\alpha) \sum_{\sigma \in \Omega(\mu_k):\sigma^\star = \alpha} \mu^\rho_k(\sigma)=\sum_{\alpha \in \Omega(\mu)} \mu^{(\*\phi_\rho),\mathds{1}_{F(\rho)}}(\alpha) f(\alpha) \log f(\alpha)	
\end{align*}
Similarly, it holds that
\begin{align*}
\sum_{\sigma \in \Omega(\mu_k)}\mu^\rho_k (\sigma)f^k(\sigma) = \sum_{\alpha \in \Omega(\mu)}\mu^{(\*\phi_\rho),\mathds{1}_{F(\rho)}}(\alpha) f(\alpha).	
\end{align*}
Note that $\Omega(\mu^{(\*\phi_\rho),\mathds{1}_{F(\rho)}}) \subseteq \Omega(\mu)$. This implies $\Ent[\mu^\rho_k]{f^k} = \Ent[\mu^{(\*\phi_\rho), \mathds{1}_{F(\rho)}}]{f}$.
\end{proof}

\subsection{Proof of \Cref{lem:LBF}}
Fix the distribution $\mu$, parameters $\theta \in (0,1)$,  and a function $f:\{\0,\1\}^V \to \mathds{R}_{\geq 0}$.
We define the following function $H: [0,1]^V \to \mathds{R}$:
\begin{align*}
  H(\*b) =  \sum_{\tau \in \Omega(\mu)} \mu(\tau) \sum_{\substack{R \subseteq \tau^{-1}(\1):\\ \forall v \in \tau^{-1}(\1)\setminus R,\; b_v > 0}}   \prod_{u \in R}(1-b_u)\prod_{v \in \tau^{-1}(\1) \setminus R}b_v \cdot \Ent[\mu^{(\*b_{V \setminus R }),\mathds{1}_R}]{f}.
\end{align*}
Let $\ell = \ctp{\theta n k}$. We have
\begin{align*}
H_f(\ell, k) = \E[k\*b \sim \HyperGeo]{H(\*b)}.	
\end{align*}
We define a bad event $\+B_{\delta, k}$ for vector $\*b$ such that 
\begin{align*}
\+B_{\delta, k}: \exists v \in V, \abs{b_v - \theta} \geq \delta. 
\end{align*}
\begin{claim}
\label{claim-concentration}
For any $\epsilon > 0$, any $\delta > 0$, there exists $K > 0$ such that for all $k \geq K$, $\Pr[k\*b \sim \HyperGeo]{\+B_{\delta, k}} \leq \epsilon$.
\end{claim}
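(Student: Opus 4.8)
The plan is to deduce \Cref{claim-concentration} from the single-coordinate tail bound for the multivariate hypergeometric distribution recorded in \Cref{lemma:hypergometric-concentration}, together with a union bound over the $n=\abs{V}$ coordinates. Recall that $k\*b=\*a$ is sampled from $\HyperGeo$ with $n$ buckets of $k$ balls each and $\ell=\ctp{\theta n k}$ balls drawn, so $b_v=a_v/k$ and each $b_v$ has mean $\frac{\ell}{kn}$. We may assume $\delta\in(0,1)$ without loss of generality: if $\delta\ge 1$ then $\+B_{\delta,k}$ is impossible, since $b_v\in[0,1]$ and $\theta\in(0,1)$ force $\abs{b_v-\theta}<1$, and for smaller $\delta$ a proof of the claim for $\delta'<\delta$ already implies it for $\delta$ because $\+B_{\delta,k}\subseteq\+B_{\delta',k}$.

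First I would bound the rounding error of the mean: since $\theta\le\frac{\ell}{kn}\le\theta+\frac{1}{kn}$, we have $\abs{\frac{\ell}{kn}-\theta}\le\frac{1}{kn}$, which is below $\frac{\delta}{2}$ once $k>\frac{2}{n\delta}$. Next I would apply \Cref{lemma:hypergometric-concentration} with the parameter $\frac{\delta}{2}\in(0,1)$ to obtain, for every $v\in V$,
\[
\Pr[k\*b\sim\HyperGeo]{\abs*{b_v-\tfrac{\ell}{kn}}\ge\tfrac{\delta}{2}}\le 2\exp\tp{-\tfrac{\delta^2 k}{2}}.
\]
Combining the two estimates by the triangle inequality, for $k>\frac{2}{n\delta}$ the event $\{\abs{b_v-\theta}\ge\delta\}$ is contained in $\{\abs{b_v-\frac{\ell}{kn}}\ge\frac{\delta}{2}\}$, so $\Pr{\abs{b_v-\theta}\ge\delta}\le 2\exp(-\delta^2 k/2)$. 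A union bound over the $n$ coordinates then gives $\Pr[k\*b\sim\HyperGeo]{\+B_{\delta,k}}\le 2n\exp(-\delta^2 k/2)$, which is at most $\epsilon$ whenever $k\ge\frac{2}{\delta^2}\ln\frac{2n}{\epsilon}$. Hence the claim holds with $K\triangleq\max\left\{\ctp{\tfrac{2}{n\delta}}+1,\ \ctp{\tfrac{2}{\delta^2}\ln\tfrac{2n}{\epsilon}}\right\}$.

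This argument is essentially routine; the only point that needs a little care is that the common mean $\frac{\ell}{kn}$ of the $b_v$'s equals the target value $\theta$ only in the limit $k\to\infty$, not exactly, because $\ell=\ctp{\theta n k}$. I would handle this by splitting the allowed deviation $\delta$ into a sampling-fluctuation part ($\delta/2$, controlled by the hypergeometric concentration inequality) and a rounding part ($\delta/2$, controlled by taking $k$ large), exactly as above. No other obstacle is anticipated.
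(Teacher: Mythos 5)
Your argument is correct and is essentially identical to the paper's own proof: a union bound over the $n$ coordinates, splitting the deviation $\delta$ into a rounding part $\abs{\frac{\ell}{kn}-\theta}\le\frac{1}{kn}\le\frac{\delta}{2}$ and a fluctuation part controlled by \Cref{lemma:hypergometric-concentration}, yielding the same bound $2n\exp(-\delta^2 k/2)$. The only differences are cosmetic (your slightly sharper threshold $k>\frac{2}{n\delta}$ versus the paper's $k\ge\frac{2}{\delta}$, and your explicit reduction to $\delta\in(0,1)$).
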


Let $\*\theta$ denote the constant vector with value $\theta$. 
Note that $H(\*b)$ is a continuous function in $(0,1)^V$.
Given any $\epsilon > 0$, we can find $0 < \delta = \delta(\epsilon) < \frac{\theta}{2}$ such that for any $\*b \in (0,1)^V$ with $\norm{\*b-\*\theta}_\infty < \delta$, 
\begin{align*}
\abs{H(\*b) - H(\*\theta)} \leq \frac{\epsilon}{2}.	
\end{align*}
Besides, for any vector $\*b \in [0,1]^V$, we have
\begin{align*}
H(\*b) \leq M = M(\mu,f),	
\end{align*}
where $M$ depends only on $\mu$ and $f$.
By \Cref{claim-concentration}, there exists $K > 0$ such that for all $k \geq K$, it holds that $\Pr[k\*b \sim \HyperGeo]{\+B_{\delta, k}} \leq \frac{\epsilon}{2\max\{H(\*\theta), M \}}$. Hence for all $k \geq K$, we have 
\begin{align*}
H_f(\ell, k) &= \E[k\*b \sim \HyperGeo]{H(\*b)} \geq 	\tp{1 - \Pr[k\*b \sim \HyperGeo]{\+B_{\delta, k}}}\tp{H(\*\theta) - \frac{\epsilon}{2}} \geq H(\*\theta) - \epsilon\\
H_f(\ell, k) &= \E[k\*b \sim \HyperGeo]{H(\*b)} \leq H(\*\theta) + \frac{\epsilon}{2} + M\Pr[k\*b \sim \HyperGeo]{\+B_{\delta, k}} \leq  H(\*\theta) + \epsilon.
\end{align*}
Let $\norm{\tau}_+ = |\{v \in V \mid \tau_v = \1\}|$.
This implies that 
\begin{align*}
  \lim_{k \to \infty} H_f\tp{ \ctp{\theta n k}, k} = H(\*\theta)
  &=  \sum_{\tau \in \Omega(\mu)} \mu(\tau) \sum_{R \subseteq \tau^{-1}(\1)}  (1-\theta)^{\abs{R}} \theta^{\norm{\tau}_+ - \abs{R}} \cdot \Ent[\mu^{(\*\theta_{V \setminus R }),\mathds{1}_R}]{f} \\
  &=  \sum_{R\subseteq V: \mu_R(\mathds{1}_R) > 0} (1-\theta)^{\abs{R}} \theta^{- \abs{R}}\Ent[\pi^{\mathds{1}_R}]{f}\sum_{\substack{\tau \in \Omega(\mu):\\ \tau_R = \mathds{1}_R}} \mu(\tau) \theta^{\norm{\tau}_+},
\end{align*}
where the last equation holds due to $\pi = \mu^{(\theta)}$ and $\mu^{(\*\theta_{V \setminus R }),\mathds{1}_R} = \pi^{\mathds{1}_R}$, this is because  $\pi$ and $\mu^{(\*\theta_{V \setminus R })}$ disagree only at the local fields on $R$ and the configuration on $R$ is fixed by $\mathds{1}_R$. Furthermore, we have
\begin{align*}
 \lim_{k \to \infty} H_f\tp{ \ctp{\theta n k}, k} 	&= \frac{Z_\pi}{\theta^{|V|}} \sum_{R \subseteq V:\mu_R(\mathds{1}_R) > 0}(1-\theta)^R\theta^{|V|-|R|}\Ent[\pi^{\mathds{1}_R}]{f}\pi_R(\mathds{1}_R)\\
 &= \frac{Z_\pi}{\theta^{\abs{V}}} \E[R\sim \Bin(V, 1 - \theta)]{\pi_R\tp{\mathds{1}_R} \cdot \Ent[\pi^{\mathds{1}_R}]{f}},
\end{align*}
where we assume $\pi_R\tp{\mathds{1}_R} \cdot \Ent[\pi^{\mathds{1}_R}]{f} = 0$ if $\pi_R\tp{\mathds{1}_R} = 0$.

\begin{proof}[Proof of \Cref{claim-concentration}]
Recall that $n = |V|$ and $\ell = \lceil \theta kn \rceil$.
Observe that when $k \ge \frac{2}{\delta}$,
  \begin{align}
  \label{eq:bad-1}
    \Pr[k\*b \sim {\HyperGeo[\ell]} ]{\+B_{\delta,k}} 
    &= \Pr[k\*b \sim {\HyperGeo[\ell_k]} ]{\exists v \in V, \abs{b_v-\theta} \ge \delta} \notag\\
    &\le \sum_{v \in V} \Pr[k\*b \sim {\HyperGeo[\ell]}]{\abs{b_v-\theta} \ge \delta}\notag\\
    &\le \sum_{v \in V} \Pr[k\*b \sim {\HyperGeo[\ell]}]{\abs{b_v - \frac{\lceil \theta kn \rceil}{kn}} \ge \frac{\delta}{2}},
  \end{align}
where the last inequality holds for $k \geq \frac{2}{\delta}$.

Furthermore, by \Cref{lemma:hypergometric-concentration}, there exists  $K_0=K_0(\delta,\epsilon,n)$ such that for $k \ge K_0$,
  \begin{align}\label{eq:bad-2}
  \forall v \in V,\quad  \Pr[k\*b \sim \HyperGeo]{\abs{b_v - \frac{\lceil \theta kn \rceil}{kn}} \ge \frac{\delta}{2}} \leq  2 \exp \tp{\frac{-\delta^2 k }{2} } \le \frac{\epsilon}{n},
  \end{align}
  where the last inequality holds because $k$ is sufficiently large.
  Combining ~\eqref{eq:bad-1} and ~\eqref{eq:bad-2} proves the claim.	
\end{proof}

\section{Implication to Modified Log-Sobolev Constant}
\label{section:compare-FD-GD}

In this section, we prove \Cref{lemma-imply-mls}.
Let $P = \PGD_\mu$ denote the Glauber dynamics on $\mu$.
We only need to prove the following lemma.
\begin{lemma} \label{lem:compare}
Let $\theta \in (0, 1)$ and $\mu$ be a distribution over $\{\0, \1\}^V$. 
  For any function $f: \{\0, \1\}^V \to \mathds{R}_{\geq 0}$,
  \begin{align} \label{eq:compare-target}
  \rho^{\mathrm{GD}}_{\min}(\pi) \cdot\frac{Z_\pi}{\theta^{n}} \E[R \sim \Bin(V,1-\theta)]{\pi_{R}\tp{\mathds{1}_{R}} \cdot \Ent[\pi^{\mathds{1}_{R}}]{f}} \leq \+E_{P}(f, \log f),
  \end{align} 
  where $n = |V|$, $\pi = \mu^{\tp{\*\theta}}$, $ \rho^{\mathrm{GD}}_{\min}(\pi)$ is defined in~\eqref{eq-def-MLS-min}, and $Z_\pi$ is defined in~\eqref{eq-redef-pi}.
\end{lemma}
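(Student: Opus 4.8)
The plan is to follow the route sketched in \Cref{section-imply-mls}: apply the modified log-Sobolev inequality on each pinned distribution $\pi^{\mathds{1}_R}$ and then compare the probabilistically-weighted average of the resulting Dirichlet forms with the single Dirichlet form $\mathcal{E}_P(f,\log f)$. First I would dispose of degeneracies: if $\rho^{\mathrm{GD}}_{\min}(\pi)=0$ the claimed inequality is trivial; otherwise every pinning of $\pi$ has positive MLS constant (in particular $\pi$ itself), so $\Omega(\mu)=\Omega(\pi)$ is connected under single-vertex updates, and hence $\mathcal{E}_P(f,\log f)=+\infty$ unless $f>0$ on $\Omega(\mu)$ --- so we may assume $f>0$. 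Now, for every $R\subseteq V$ with $\pi_R(\mathds{1}_R)>0$, the distribution $\pi^{\mathds{1}_R}$ is a pinning of $\pi$, so $\rho^{\mathrm{GD}}(\pi^{\mathds{1}_R})\ge\rho^{\mathrm{GD}}_{\min}(\pi)$ by \eqref{eq-def-MLS-min}, and the definition of the MLS constant gives $\rho^{\mathrm{GD}}_{\min}(\pi)\,\Ent[\pi^{\mathds{1}_R}]{f}\le\mathcal{E}_{P(\pi^{\mathds{1}_R})}(f,\log f)$. Multiplying by $\frac{Z_\pi}{\theta^n}\pi_R(\mathds{1}_R)\ge 0$ and averaging over $R\sim\Bin(V,1-\theta)$ (terms with $\pi_R(\mathds{1}_R)=0$ vanish on both sides by convention), \Cref{lem:compare} is reduced to the Dirichlet-form comparison
\begin{align}\label{eq:plan-DF}
\frac{Z_\pi}{\theta^n}\,\E[R\sim\Bin(V,1-\theta)]{\pi_R(\mathds{1}_R)\cdot\mathcal{E}_{P(\pi^{\mathds{1}_R})}(f,\log f)}\ \le\ \mathcal{E}_P(f,\log f).
\end{align}

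\textbf{An averaged Markov chain.} To prove \eqref{eq:plan-DF} I would realize its left-hand side as the Dirichlet form of a \emph{single} Markov chain $\widehat P$ on $\Omega(\mu)$. Let $\widehat P$ act as follows: from a state $X$, form a random set $R$ by retaining each $v$ with $X_v=\1$ independently with probability $1-\theta$, and then perform one step of the Glauber dynamics $P(\pi^{\mathds{1}_R})$ (this makes sense because $R\subseteq X^{-1}(\1)$ forces $X\in\Omega(\pi^{\mathds{1}_R})$). Set $\widetilde{\mathcal{D}}(R)\triangleq\frac{Z_\pi}{\theta^n}(1-\theta)^{|R|}\theta^{n-|R|}\pi_R(\mathds{1}_R)$. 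Using $\mu(\sigma)\theta^{\|\sigma\|_{+}}=Z_\pi\pi(\sigma)$, the factorization $\pi(\sigma)=\pi_R(\mathds{1}_R)\,\pi^{\mathds{1}_R}(\sigma)$ valid when $R\subseteq\sigma^{-1}(\1)$, and $\sum_{R\subseteq\sigma^{-1}(\1)}(1-\theta)^{|R|}\theta^{\|\sigma\|_{+}-|R|}=1$, one checks that $\widetilde{\mathcal{D}}$ is a probability distribution on subsets of $V$, that $\sum_R\widetilde{\mathcal{D}}(R)\,\pi^{\mathds{1}_R}=\mu$, and that
\begin{align*}
\mu(X)\,\widehat P(X,X')\ =\ \sum_{R\subseteq V}\widetilde{\mathcal{D}}(R)\,\pi^{\mathds{1}_R}(X)\,P(\pi^{\mathds{1}_R})(X,X').
\end{align*}
Since each $P(\pi^{\mathds{1}_R})$ is reversible with respect to $\pi^{\mathds{1}_R}$, the right-hand side is symmetric in $X\leftrightarrow X'$, so $\widehat P$ is reversible with respect to $\mu$; and expanding $\mathcal{E}_{\widehat P}(f,\log f)=\EE[\mu]{f\log f}-\inner{f}{\widehat P\log f}_\mu$ and using $\sum_R\widetilde{\mathcal{D}}(R)\pi^{\mathds{1}_R}=\mu$ for the first term yields $\mathcal{E}_{\widehat P}(f,\log f)=\sum_R\widetilde{\mathcal{D}}(R)\,\mathcal{E}_{P(\pi^{\mathds{1}_R})}(f,\log f)$, which is precisely the left-hand side of \eqref{eq:plan-DF}.

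\textbf{Site decomposition and a two-state estimate.} The point of introducing $\widehat P$ is that it decomposes over vertices: summing its definition over $R$ for a fixed $v$ (the sets $R$ with $v\in R$ contribute, via $\sum_{R'\subseteq X^{-1}(\1)\setminus\{v\}}(1-\theta)^{|R'|}\theta^{\|X\|_{+}-1-|R'|}=1$, a factor $1-\theta$ times ``do nothing'' when $X_v=\1$; the sets $R$ with $v\notin R$ contribute the resample, with total weight $\theta$ when $X_v=\1$ and $1$ when $X_v=\0$) one obtains $\widehat P=\frac1n\sum_{v\in V}\widehat P_v$, where $\widehat P_v$ changes only coordinate $v$ and, conditioned on $X_{V\setminus\{v\}}=\omega$, resamples $X_v$ from the tilted marginal $\pi_v^\omega=(\mu_v^\omega)^{(\theta)}$ when $X_v=\0$, and with probability $1-\theta$ does nothing while with probability $\theta$ resamples $X_v$ from $\pi_v^\omega$ when $X_v=\1$. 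One checks directly that $\widehat P_v$ is reversible with respect to $\mu$ (fiberwise, with respect to $\mu_v^\omega$). Since also $\PGD_\mu=\frac1n\sum_{v\in V}P_v$ with $P_v$ resampling $v$ from $\mu_v^\omega$, it suffices to prove $\mathcal{E}_{\widehat P_v}(f,\log f)\le\mathcal{E}_{P_v}(f,\log f)$ for every $v$, and both sides split over fibers $\omega$ with weight $\mu_{V\setminus\{v\}}(\omega)$. On a fiber with $\Omega(\mu_v^\omega)=\{\0,\1\}$, writing $p=\mu_v^\omega(\1)$, $x=f(\omega,\0)$ and $y=f(\omega,\1)$, a direct two-state computation gives
\begin{align*}
\inner{f}{(I-P_v)\log f}_{\mu_v^\omega}=p(1-p)(x-y)(\log x-\log y),\qquad \inner{f}{(I-\widehat P_v)\log f}_{\mu_v^\omega}=\frac{\theta p(1-p)}{(1-p)+\theta p}(x-y)(\log x-\log y),
\end{align*}
so their difference equals $\frac{(1-p)^2 p\,(1-\theta)}{(1-p)+\theta p}\,(x-y)(\log x-\log y)\ge 0$, using $(x-y)(\log x-\log y)\ge 0$ and $(1-p)+\theta p-\theta=(1-p)(1-\theta)\ge 0$. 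Summing over fibers and over $v$ gives $\mathcal{E}_{\widehat P}(f,\log f)\le\mathcal{E}_P(f,\log f)$, which together with the identity of the previous paragraph establishes \eqref{eq:plan-DF} and hence \Cref{lem:compare}.

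\textbf{Where the difficulty lies.} The substantive step is the second paragraph: recognizing the probabilistically-weighted average of pinned Dirichlet forms as the Dirichlet form of a single reversible chain $\widehat P$, and extracting from this both the transition identity $\mu(X)\widehat P(X,X')=\sum_R\widetilde{\mathcal{D}}(R)\pi^{\mathds{1}_R}(X)P(\pi^{\mathds{1}_R})(X,X')$ and the vertexwise factorization $\widehat P=\frac1n\sum_v\widehat P_v$. Once $\widehat P_v$ has been identified --- its nontrivial feature being that it resamples from the $\theta$-tilted marginal $\pi_v^\omega$ yet stays reversible with respect to $\mu_v^\omega$ --- the comparison with $P_v$ is the clean two-state estimate above, in which the factor $(x-y)(\log x-\log y)\ge 0$ appears and the hypothesis $\theta<1$ enters precisely through $(1-p)+\theta p-\theta=(1-p)(1-\theta)$. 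The degenerate cases --- fibers where $v$ is effectively pinned, functions $f$ with zeros, and $\rho^{\mathrm{GD}}_{\min}(\pi)=0$ --- are routine and are handled as in the first paragraph.
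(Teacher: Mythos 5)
Your proof is correct and follows essentially the same route as the paper: reduce via the pinned modified log-Sobolev inequalities to a comparison of Dirichlet forms, then compare site by site and fiber by fiber, where your two-state estimate $\frac{\theta p(1-p)}{(1-p)+\theta p}\le p(1-p)$ is algebraically identical to the paper's relaxation $\pi_v^{\sigma}(\1)\le\mu_v^{\sigma}(\1)$ combined with the chain rule in~\eqref{eq:tensorization-change-base}. The only difference is presentational: you package the weighted average of pinned Dirichlet forms as the Dirichlet form of an explicit auxiliary reversible single-site chain $\widehat{P}$ (a one-site analogue of the field dynamics of~\cite{chen2021rapid}), whereas the paper carries out the identical computation by exchanging the expectations over $R$ and $\sigma$ directly in~\eqref{eq:entropy-expectation-R-identity}.
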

\Cref{lemma-imply-mls} is a straightforward  consequence of \Cref{lem:compare}. The rest of this section is dedicated to the proof of \Cref{lem:compare}.

For convenience, we define the following notation.
  Suppose $\mu$ is a distribution over $\{\0, \1\}^V$ and $f: \{\0, \1\}^V \to \mathds{R}_{\geq 0}$ a function, we define the \emph{covariance} between $f$ and $\log f$ with respect to $\mu$ as
  \begin{align*}
    \MEnt[\mu]{f} &\triangleq \frac{1}{2} \sum_{\sigma, \tau \in \Omega(\mu)} \mu(\sigma) \mu(\tau) \tp{f(\sigma) - f(\tau)} \tp{\log f(\sigma) - \log f(\tau)}.
  \end{align*}
Note that for any $f:\{\0,\1\}^V \to \mathds{R}_{\geq 0}$, it holds that $\MEnt[\mu]{f} \geq 0$.

Let $\mu$ be a distribution over $\{\0, \1\}^V$ and let $C > 0$ be a constant.
Let $P$ denote the Glauber dynamics for $\mu$.
For function $f: \{\0, \1\}^V \to \mathds{R}_{\geq 0}$, the Dirichlet form satisfies
\begin{align}
  \nonumber \+E_P(f,\log f) &= \frac{1}{2}\sum_{\sigma, \tau \in \Omega(\mu)} \mu(\sigma) P(\sigma, \tau) \tp{f(\sigma) - f(\tau)} \tp{\log f(\sigma) - \log f(\tau)} \\
  \nonumber &= \frac{1}{2}\sum_{\sigma \in \Omega(\mu)}\mu(\sigma) \frac{1}{n} \sum_{v \in V} \sum_{\tau \in \Omega(\mu^{\sigma_{V\setminus v}})}\mu^{\sigma_{V\setminus v}}(\tau) \tp{f(\sigma) - f(\tau)} \tp{\log f(\sigma) - \log f(\tau)} \\
  \nonumber &= \frac{1}{n} \sum_{v\in V} \sum_{\chi \in \Omega(\mu_{V\setminus \{v\}})} \mu_{V\setminus \{v\}}(\chi) \cdot  \frac{1}{2} \sum_{\sigma, \tau \in \Omega(\mu^\rho)} \mu^\chi(\sigma)\mu^\chi(\tau) \tp{f(\sigma) - f(\tau)} \tp{\log f(\sigma) - \log f(\tau)} \\
  \label{eq:mAT} &= \frac{1}{n} \sum_{v\in V} \sum_{\chi \in \Omega(\mu_{V\setminus \{v\}})} \mu_{V\setminus \{v\}}(\chi)  \cdot \MEnt[\mu^\chi]{f} = \frac{1}{n}\sum_{v\in V} \mu[\MEnt[v]{f}], 
\end{align}
where we  use the following notation
\begin{align*}
  \mu[\MEnt[v]{f}] &\triangleq \sum_{\chi \in \Omega(\mu_{V\setminus \{v\}})} \mu_{V\setminus \{v\}}(\chi) \cdot \MEnt[\mu^\chi]{f}.
\end{align*}
Recall that we say $P$ satisfies the \emph{modified log-Sobolev inequality} with constant $C$ if for any $f: \{\0, \1\}^V \to \mathds{R}_{\geq 0}$, it holds that
\begin{align} \label{eq:mAT-short}
  C \cdot \Ent[\mu]{f} \leq  \+E_P(f,\log f) = \frac{1}{n}\sum_{v\in V} \mu[\MEnt[v]{f}].
\end{align}

Now, we are ready to prove \Cref{lem:compare}
Consider the modified log-Sobolev inequality for the Glauber dynamics on  $\pi^{\mathds{1}_R}$. 
By \eqref{eq:mAT-short}, for all $R\subseteq V$ with $\pi_R(\mathds{1}_R)>0$,  the Glauber dynamics on $\pi^{\mathds{1}_R}$ satisfies the modified log-Sobolev inequality with constant $\rho^{\mathrm{GD}}(\pi^{\mathds{1}_R})$:
\[
\forall f\in\mathds{R}^{\Omega(\mu)},\quad
\rho^{\mathrm{GD}}_{\min}(\pi) \cdot \Ent[\pi^{\mathbf{1}_R}]{f}
\le
\rho^{\mathrm{GD}}(\pi^{\mathds{1}_R}) \cdot \Ent[\pi^{\mathbf{1}_R}]{f}
\le
\frac{1}{n}\sum_{v \in V} \pi^{\mathds{1}_R}[\MEnt[v]{f}],
\]
where $\rho^{\mathrm{GD}}_{\min}(\pi) \leq \rho^{\mathrm{GD}}(\pi^\sigma)$ for all feasible partial configuration $\sigma$, as defined in~\eqref{eq-def-MLS-min}.
Recall that $\Bin(V,1-\theta)$ denotes the distribution of subset $R \subseteq V$ that is randomly generated by including each $v \in V$ into $R$ independently with probability $1 - \theta$.
The LHS of \eqref{eq:compare-target} can be upper bounded as follows
\begin{align}
\label{eq:FD-Dirichlet-upper-bound}
  &\rho^{\mathrm{GD}}_{\min}(\pi) \cdot \frac{Z_\pi}{\theta^{\abs{V}}} \E[R \sim \Bin(V,1-\theta)]{\pi_R(\mathds{1}_R) \cdot \Ent[\pi^{\mathds{1}_R}]{f}}\notag\\ 
  \leq\,& \frac{Z_\pi}{\theta^{\abs{V}}} \cdot \frac{1}{n} \sum_{v \in V} \E[R \sim \Bin(V,1-\theta)]{\pi_R(\mathds{1}_R)\cdot\pi^{\mathds{1}_R}[\MEnt[v]{f}]}.
\end{align}

Let $I[\cdot]$ denote the indicator variable.
The following identity holds for all $f\in\mathds{R}^{\Omega(\mu)}$:
\begin{align}
\E[R \sim \Bin(V,1-\theta)]{\pi_R(\mathds{1}_R)\cdot\pi^{\mathds{1}_R}[\MEnt[v]{f}]}
&\overset{(\star)}{=}
\E[R \sim \Bin(V,1-\theta)]{\E[\sigma\sim\pi_{V\setminus\{v\}}]{I\left[R\subseteq\sigma^{-1}(\1)\right]\cdot\MEnt[\pi^{\sigma}]{f}}}\notag\\
&= \E[\sigma\sim\pi_{V\setminus\{v\}}]{\Pr[R \sim \Bin(V,1-\theta)]{R\subseteq\sigma^{-1}(\1)}\cdot\MEnt[\pi^{\sigma}]{f}}\notag\\
&= \E[\sigma\sim\pi_{V\setminus\{v\}}]{\theta^{|V|-\|\sigma\|_{+}}\cdot\MEnt[\pi^{\sigma}]{f}},\label{eq:entropy-expectation-R-identity}
\end{align}
where the nontrivial equation $(\star)$ holds by verifying for every choice of $v\in V$ and $R\subseteq V$ as follows: 
\begin{itemize}
\item For the case that $\pi_R(\mathds{1}_R)>0$ and $v\not\in R$, it holds that
\[
\pi_R(\mathds{1}_R)=\Pr[\sigma\sim\pi]{R\subseteq\sigma^{-1}(\1)}=\Pr[\sigma\sim\pi_{V\setminus\{v\}}]{R\subseteq\sigma^{-1}(\1)},
\]
and $\pi^{\mathds{1}_R}[\MEnt[v]{f}]$ is well-defined, such that
\[
\pi^{\mathds{1}_R}[\MEnt[v]{f}]=\E[\sigma\sim\pi_{V\setminus\{v\}}^{\mathds{1}_R}]{\MEnt[\pi^{\sigma}]{f}}=\E[\sigma\sim\pi_{V\setminus\{v\}}]{\MEnt[\pi^{\sigma}]{f}\mid R\subseteq\sigma^{-1}(\1)}.
\]
Therefore, 
\begin{align*}
\pi_R(\mathds{1}_R)\cdot\pi^{\mathds{1}_R}[\MEnt[v]{f}]
&=
\Pr[\sigma\sim\pi_{V\setminus\{v\}}]{R\subseteq\sigma^{-1}(\1)}
\cdot
\E[\sigma\sim\pi_{V\setminus\{v\}}]{\MEnt[\pi^{\sigma}]{f}\mid R\subseteq\sigma^{-1}(\1)}\\
&=
\E[\sigma\sim\pi_{V\setminus\{v\}}]{I\left[R\subseteq\sigma^{-1}(\1)\right]\cdot \MEnt[\pi^{\sigma}]{f}}.
\end{align*}
\item For the case that $\pi_R(\mathds{1}_R)=0$ or $v\in R$, both sides are 0. 
On the left-hand-side, if $\pi_R(\mathds{1}_R)=0$, then by convention 
\[
\pi_R(\mathds{1}_R)\cdot\MEnt[\pi^{\mathds{1}_R}]{f}=0;
\] 
or else, if $\pi_R(\mathds{1}_R)>0$ but $v\in R$, then  $\pi^{\mathds{1}_R}[\MEnt[v]{f}]$ is well-defined, but for $\sigma\sim\pi_{V\setminus\{v\}}^{\mathds{1}_R}$, 
the $\mathds{1}_R\uplus\sigma$ gives a configuration fully specified on $V$ and hence $|\Omega(\pi^{\mathds{1}_R\uplus\sigma})| = 1$. In this case, by definition,  the covariance must be 0, i.e. 
\[
\pi^{\mathds{1}_R}[\MEnt[v]{f}]=\E[\sigma\sim\pi_{V\setminus\{v\}}^{\mathds{1}_R}]{\MEnt[\pi^{\mathds{1}_R\uplus\sigma}]{f}}=0.
\]
On the right-hand-side, if $\pi_R(\mathds{1}_R)=0$ or $v\in R$, then for $\sigma\sim\pi_{V\setminus\{v\}}$, the event $R\subseteq\sigma^{-1}(\1)$ can never occur, and hence
\[
\E[\sigma\sim\pi_{V\setminus\{v\}}]{I\left[R\subseteq\sigma^{-1}(\1)\right]\cdot \MEnt[\pi^{\sigma}]{f}}=0.
\]
\end{itemize}
This gives the equation $(\star)$ in~\eqref{eq:entropy-expectation-R-identity}.
Meanwhile, the other two equations in~\eqref{eq:entropy-expectation-R-identity} follows respectively from  linearity of expectation and the fact that $\Pr[R \subseteq V]{R\subseteq \Lambda}=\theta^{|V|-|\Lambda|}$ for all $\Lambda\subseteq V$.

Furthermore, it can be verified that
\begin{align}
  &\E[\sigma \sim \pi_{V \setminus\{v\}}]{\frac{1}{\theta^{\norm{\sigma}_+}}\MEnt[\pi^\sigma]{f}}\notag\\
  =\,& \sum_{\sigma \in \Omega(\pi_{V\setminus \{v\}})} \frac{1}{\theta^{\norm{\sigma}_+}}\pi_{V\setminus\{v\}}(\sigma)  \pi^\sigma_v(-1)  \pi^\sigma_v(+1)  (f(\sigma_+^v) - f(\sigma_{-}^v)) \tp{\log f(\sigma_+^v) - \log f(\sigma_-^v)} \notag\\
  =\,& \frac{1}{Z_\pi} \sum_{\sigma \in \Omega(\pi_{V\setminus \{v\}})} \mu_{V\setminus\{v\}}(\sigma) \mu^\sigma_v(-1)  \pi^\sigma_v(+1)   (f(\sigma_+^v) - f(\sigma_{-}^v)) \tp{\log f(\sigma_+^v) - \log f(\sigma_-^v)} \notag\\
  \leq\,& \frac{1}{Z_\pi} \sum_{\sigma \in \Omega(\pi_{V\setminus \{v\}})} \mu_{V\setminus\{v\}}(\sigma) \mu^\sigma_v(-1)  \mu^\sigma_v(+1)  (f(\sigma_+^v) - f(\sigma_{-}^v)) \tp{\log f(\sigma_+^v) - \log f(\sigma_-^v)}\notag\\
  =\,& \frac{1}{Z_\pi} \mu[\MEnt[v]{f}], \label{eq:tensorization-change-base}
\end{align}
where  $\sigma_{\pm}^v\in\{\0,\1\}^V$ denote the configurations on $V$ where $\sigma_{\pm}^v(V\setminus\{v\})=\sigma(V \setminus \{v\} )$ and $\sigma_{\pm}^v(v)=\pm1$,
the second equation is due to the chain rule and the relation between $\pi$ and $\mu$ in~\eqref{eq-redef-pi}, and the last inequality is due to $\Omega(\pi)=\Omega(\mu)$, $(f(\sigma_+^v) - f(\sigma_{-}^v)) \tp{\log f(\sigma_+^v) - \log f(\sigma_-^v)}\geq 0$ and the relaxation 
\begin{align}
\pi_{v}^{\sigma}(\1)\le\mu_v^{\sigma}(\1),\label{eq:margin-monotone-pi-mu}
\end{align}
which holds because $\pi=\mu^{\tp{\*\theta}}$ is obtained by biasing every variable with a local field $\theta\in(0,1)$.
Indeed, 
\begin{align*}
\pi^\sigma_v(\1) = \frac{\pi(\sigma_{+})}{\pi(\sigma_{-}) + \pi(\sigma_{+}) }	= \frac{\theta \mu(\sigma_{+})}{\mu(\sigma_{-}) + \theta\mu(\sigma_{+}) } \leq \frac{\mu(\sigma_{+})}{\mu(\sigma_{-}) + \mu(\sigma_{+}) } = \mu^\sigma_v(\1),
\end{align*}
where the inequality holds for $ \theta \in(0, 1)$.

Combining~\eqref{eq:FD-Dirichlet-upper-bound}, \eqref{eq:entropy-expectation-R-identity}, and~\eqref{eq:tensorization-change-base}, we have
\begin{align*}
\rho^{\mathrm{GD}}_{\min}(\pi) \cdot \frac{Z_\pi}{\theta^{\abs{V}}} \E[R \sim \Bin(V,1-\theta)]{\pi_R(\mathds{1}_R) \cdot \Ent[\pi^{\mathds{1}_R}]{f}}	 \leq 
\frac{1}{n}\sum_{v \in V} \mu[\MEnt[v]{f}]=\+E_{P}(f,\log f),
\end{align*}
where $P$ denotes the Glauber dynamics for $\mu$. This proves \Cref{lem:compare}.

\section{Application in Ising Model}
\label{section-Ising}
In this section, we prove \Cref{thm:main}.
Let $G=(V,E)$ be a graph, $\beta \in \mathds{R}_{>0}$ and $\*\lambda = (\lambda_v)_{v \in V} \in \mathds{R}_{>0}^V$.
We use the notation $\+I = (G,\beta,\*\lambda)$ to denote an Ising model and $\mu = \mu_{\+I}$ to denote the Gibbs distribution defined by $\+I$.

In order to apply \Cref{theorem-general}, we introduce the flipping operation in \cite{chen2021rapid}.
\begin{definition}
Let $\mu$ be a distribution over $\{\0,\1\}^V$, $\*\chi \in \{\0,\1\}^V$ be a direction vector.
The flipped distribution $\nu = \mathrm{flip}(\mu,\*\chi)$ over $\{\0,\1\}^V$ is defined as
\begin{align*}
	\forall \sigma \in \{\0,\1\}^V, \quad \nu(\sigma) = \mu(\sigma \odot \*\chi),
\end{align*} 
where $(\sigma \odot \*\chi)_v = \sigma_v \chi_v$ for all $v \in V$.
\end{definition}

The following fact is straightforward to verify.
\begin{fact}\label{fact:flip}
  For any distribution $\mu$ over $\{\0,\1\}^V$, $\*\chi \in \{\0,\1\}^V$ and $\nu = \mathrm{flip}(\mu,\*\chi)$, it holds that
  \begin{enumerate}
    \item $\rho^{\mathrm{GD}}(\nu) = \rho^{\mathrm{GD}}(\mu)$, $\rho^{\mathrm{GD}}_{\min}(\nu) = \rho^{\mathrm{GD}}_{\min}(\mu)$, where $\rho^{\mathrm{GD}}(\cdot)$ is the modified log-Sobolev constant of Glauber dynamics, and $\rho^{\mathrm{GD}}_{\min}(\cdot)$ is defined in~\eqref{eq-def-MLS-min};
	 \item for any $\*\phi \in \mathbb{R}_{>0}^V$, $\nu^{(\*\phi)} = \mathrm{flip}(\mu^{\tp{\*\phi^{\*\chi}}},\*\chi)$, where $\*\phi^{\*\chi} \in \mathbb{R}_{>0}^V$ satisfying $(\*\phi^{\*\chi})_v = \phi_v^{\chi_v}$ for all $v \in V$;
    \item $\sup_{\*\phi \in \mathbb{R}_{>0}^V} \norm{\Psi^{\mathrm{Inf}}_{\nu^{(\*\phi)}}}_{\infty} = \sup_{\*\phi \in \mathbb{R}_{>0}^V} \norm{\Psi^{\mathrm{Inf}}_{\mu^{(\*\phi)}}}_{\infty} $.
  \end{enumerate}
\end{fact}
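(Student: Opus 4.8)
The plan is to exploit that the flip map $T_{\*\chi}\colon\sigma\mapsto\sigma\odot\*\chi$ is an involution of $\{\0,\1\}^V$ (since $\chi_v^2=1$) which restricts to a bijection between $\Omega(\mu)$ and $\Omega(\nu)$, and to track how it interacts with the Glauber dynamics, with conditioning, and with local fields. For item~(1), I would first record the identity $\nu_v^{\sigma(V\setminus\{v\})}(a)=\mu_v^{(\sigma\odot\*\chi)(V\setminus\{v\})}(a\chi_v)$, which holds because the configuration agreeing with $\sigma$ off $v$ and equal to $a$ at $v$ is mapped by $T_{\*\chi}$ to the one agreeing with $\sigma\odot\*\chi$ off $v$ and equal to $a\chi_v$ at $v$; this immediately gives $\PGD_\nu(\sigma,\tau)=\PGD_\mu(\sigma\odot\*\chi,\tau\odot\*\chi)$. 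Then, given $f\colon\Omega(\nu)\to\mathds{R}_{\geq0}$, I would set $g\triangleq f\circ T_{\*\chi}$ and substitute $\eta=\sigma\odot\*\chi$ in the defining sums, using $\nu(\eta)=\mu(\eta\odot\*\chi)$, to obtain $\Ent[\nu]{f}=\Ent[\mu]{g}$ and $\+E_{\PGD_\nu}(f,\log f)=\+E_{\PGD_\mu}(g,\log g)$. Since $f\mapsto g$ is a bijection between the admissible test functions, the infima in~\eqref{eq:mlsc} agree, so $\rho^{\mathrm{GD}}(\nu)=\rho^{\mathrm{GD}}(\mu)$. For the pinned version I would observe $\nu^{\Lambda\gets\sigma}=\mathrm{flip}(\mu^{\Lambda\gets\sigma\odot\*\chi_\Lambda},\*\chi)$ for every $\Lambda\subseteq V$ and $\sigma\in\Omega(\nu_\Lambda)$ (here $\*\chi_\Lambda$ is the restriction of $\*\chi$ to $\Lambda$), apply the unpinned statement to each such pair, and note that $(\Lambda,\sigma)\mapsto(\Lambda,\sigma\odot\*\chi_\Lambda)$ is a bijection between the feasible partial configurations of $\nu$ and those of $\mu$; recalling~\eqref{eq-def-MLS-min}, this yields $\rho^{\mathrm{GD}}_{\min}(\nu)=\rho^{\mathrm{GD}}_{\min}(\mu)$.

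For item~(2), I would compare unnormalized weights directly: $\nu^{(\*\phi)}(\sigma)\propto\mu(\sigma\odot\*\chi)\prod_{v\,:\,\sigma_v=\1}\phi_v$, while $\mathrm{flip}(\mu^{(\*\phi^{\*\chi})},\*\chi)(\sigma)\propto\mu(\sigma\odot\*\chi)\prod_{v\,:\,(\sigma\odot\*\chi)_v=\1}\phi_v^{\chi_v}$. Splitting the products according to whether $\chi_v=\1$ or $\chi_v=\0$ and using $\sigma_v=\1\iff(\sigma\odot\*\chi)_v=\chi_v$, the ratio of the two weights works out to the constant $\prod_{v\,:\,\chi_v=\0}\phi_v$, independent of $\sigma$; hence after normalization the two distributions coincide. (That $\*\phi^{\*\chi}\in\mathds{R}_{>0}^V$ is immediate.)

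For item~(3), I would first show that flipping conjugates influence matrices by a $\pm1$ diagonal: computing $\nu_v^{u\gets a}(\1)$ as in item~(1) gives, for every distribution $\rho$, $\Ima_{\mathrm{flip}(\rho,\*\chi)}(u,v)=\chi_u\chi_v\,\Ima_{\rho}(u,v)$, where the degenerate cases $u=v$ and $\Omega(\rho_u)\neq\{\0,\1\}$ are handled by noting both sides vanish, using $\Omega(\nu_u)=\{a:\ a\chi_u\in\Omega(\rho_u)\}$. Consequently $\norm{\Ima_{\mathrm{flip}(\rho,\*\chi)}}_{\infty}=\norm{\Ima_{\rho}}_{\infty}$. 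Combining this with item~(2) gives $\norm{\Ima_{\nu^{(\*\phi)}}}_{\infty}=\norm{\Ima_{\mu^{(\*\phi^{\*\chi})}}}_{\infty}$ for every $\*\phi$, and since $\*\phi\mapsto\*\phi^{\*\chi}$ is an involution of $\mathds{R}_{>0}^V$, taking the supremum over $\*\phi$ on both sides yields the claimed equality of suprema.

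All the computations are elementary. The only point needing genuine care is the bookkeeping about how conditioning (resp.\ imposing local fields on, resp.\ computing influences of) a flipped distribution corresponds to flipping a conditioned (resp.\ field-modified, resp.\ influence) object — in particular getting the sign $\chi_u\chi_v$ right in item~(3) and checking that the stray multiplicative factor in item~(2) is genuinely constant in $\sigma$. I do not anticipate any real obstacle beyond this routine verification.
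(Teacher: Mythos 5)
Your verification is correct and complete; the paper itself omits the argument entirely (it only asserts the fact is ``straightforward to verify''), and your change-of-variables computations --- the conditional identity $\nu_v^{\sigma(V\setminus\{v\})}(a)=\mu_v^{(\sigma\odot\*\chi)(V\setminus\{v\})}(a\chi_v)$, the constant ratio $\prod_{v:\chi_v=\0}\phi_v$ in item (2), and the conjugation $\Ima_{\nu}(u,v)=\chi_u\chi_v\,\Ima_{\mu}(u,v)$ in item (3) --- are exactly the intended elementary verification. No gaps.
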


Given an Ising model $\+I=(G,\beta,\*\lambda)$, let $\*\chi_{\+I} \in \{\0,\1\}^V$ be the direction vector defined as follows:
\begin{align}
\label{eq-def-chi}
\forall v \in V, \quad \chi_v &= \begin{cases}
\1,& \lambda_v \geq 1\\
\0,& \lambda_v < 1
\end{cases}
\end{align}
We prove that the flipped distribution $\nu=\mathrm{flip}(\mu,\*\chi)$ is spectrally independent with all fields, and with proper local fields, the MLS constant is easy-to-analyze. 
\begin{lemma}\label{thm:verification}
Let  $\delta, C \in (0,1)$.
For any graph $G=(V,E)$, any $\beta \in \mathbb{R}_{>0}$ and any $\*\lambda=(\lambda_v)_{v \in V} \in \mathbb{R}^V_{>0}$, if $\beta \in \left[\frac{\Delta-2+\delta}{\Delta - \delta},\frac{\Delta-\delta}{\Delta-2+\delta}\right]$, where $\Delta \geq 3$ is the maximum degree of $G$,  let $\mu = \mu_{\+I}$ denote the Gibbs distribution defined by the Ising model $\+I=(G,\beta,\*\lambda)$, 
the flipped distribution $\nu = \mathrm{flip}(\mu,\*\chi_{\+I})$ satisfies
  \begin{enumerate}
    \item 
 	$\sup_{\*\phi \in \mathbb{R}_{>0}^V} \norm{\Psi^{\mathrm{Inf}}_{\nu^{(\*\phi)}}}_{\infty} = \sup_{\*\phi \in \mathbb{R}_{>0}^V} \norm{\Psi^{\mathrm{Inf}}_{\mu^{(\*\phi)}}}_{\infty} \leq \frac{2}{\delta}$;
    \item 
    If $\min\tp{\lambda_v,\frac{1}{\lambda_v}} \ge C$ for all $v \in V$, then $\rho^{\mathrm{GD}}_{\min}( \nu^{\tp{\frac{1}{500}}}) \ge \frac{10^{-6}C}{n}$.
  \end{enumerate}
\end{lemma}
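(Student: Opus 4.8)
The plan is to reduce both claims, via the properties of the flipping operation in \Cref{fact:flip}, to statements about honest Ising Gibbs distributions, and then to use correlation decay for the first and a marginal‑boundedness estimate for the second.

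\textbf{Claim (1).} By item~(3) of \Cref{fact:flip} it suffices to bound $\sup_{\*\phi\in\mathds{R}_{>0}^V}\norm{\Psi^{\mathrm{Inf}}_{\mu^{(\*\phi)}}}_\infty$. For any $\*\phi\in\mathds{R}_{>0}^V$ the distribution $\mu^{(\*\phi)}$ is the Gibbs distribution of the Ising model $(G,\beta,\*\lambda\odot\*\phi)$ — same graph, same edge activity $\beta$, only the fields change — so the claim reduces to showing $\norm{\Psi^{\mathrm{Inf}}}_\infty\le 2/\delta$ for \emph{every} Ising model on $G$ with $\beta\in\bigl[\tfrac{\Delta-2+\delta}{\Delta-\delta},\tfrac{\Delta-\delta}{\Delta-2+\delta}\bigr]$ and arbitrary positive fields. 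I would prove this by the standard self‑avoiding walk tree / correlation‑decay argument: fix a root $u$, pass to $T=T_{\mathrm{SAW}}(G,u)$, write $\Psi^{\mathrm{Inf}}(u,v)$ as a signed sum over the copies of $v$ in $T$ of the root's path‑influence so that $\sum_v\abs{\Psi^{\mathrm{Inf}}(u,v)}\le\sum_{w\in T}\abs{(\text{root's influence on }w)}$, and then use the potential‑function analysis of the Ising tree recursion $R\mapsto\lambda\prod_i\tfrac{\beta R_i+1}{R_i+\beta}$ to show that in the $\delta$‑uniqueness window the amortized contraction along each tree edge is at most $1-c\delta$ for an explicit $c$, uniformly in the fields; the depth‑indexed sums then form a geometric series bounded by $2/\delta$. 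This $\ell_\infty$ spectral‑independence bound for the Ising model in the uniqueness regime is standard, so I would cite the corresponding lemma rather than redo the recursion.

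\textbf{Claim (2).} Put $\theta=\tfrac{1}{500}$ and $\psi_v=\theta^{\chi_v}$. By items~(1) and~(2) of \Cref{fact:flip}, $\nu^{(\theta)}=\mathrm{flip}\bigl(\mu^{(\*\psi)},\*\chi_{\+I}\bigr)$, hence $\rho^{\mathrm{GD}}_{\min}(\nu^{(\theta)})=\rho^{\mathrm{GD}}_{\min}(\mu^{(\*\psi)})$. Unwinding the flip, $\nu^{(\theta)}$ is, up to the global spin‑flip $\*\chi_{\+I}$ (which changes neither $\rho^{\mathrm{GD}}$ nor $\rho^{\mathrm{GD}}_{\min}$), a $2$‑spin system on $G$ with edge weights $\beta^{\pm1}$ and with every local field equal to $\theta\max(\lambda_v,1/\lambda_v)$; the role of the flip is precisely to make all these fields point the same way, and $\min(\lambda_v,1/\lambda_v)\ge C$ then puts them all in the band $[\theta,\theta/C]$. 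The quantitative input that makes everything degree‑free is that we are in the uniqueness window: for $\Delta\ge 3$ and $\beta\in[\tfrac{\Delta-2}{\Delta},\tfrac{\Delta}{\Delta-2}]$ one has $\beta^{\pm\Delta}\le(\tfrac{\Delta}{\Delta-2})^{\Delta}\le e^{6}$, a bound independent of $\Delta$. Hence for every pinning $\tau$ and every free vertex $v$, $\nu^{(\theta),\tau}_v(\1)$ is an average of quantities $(1+(\theta''_v)^{-1}s)^{-1}$ with bare field $\theta''_v=\theta\max(\lambda_v,1/\lambda_v)\in[\theta,\theta/C]$ and $s\in[\min(\beta,1/\beta)^{\deg v},\max(\beta,1/\beta)^{\deg v}]\subseteq[e^{-6},e^{6}]$, so $\nu^{(\theta),\tau}_v(\1)\in[b,1-b]$ with $b=\Omega(\min(C,\theta))\gtrsim C$: the magnetized model and all of its pinnings are $b$‑marginally bounded, uniformly in $\Delta$ and $\delta$.

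It remains to convert this uniform marginal boundedness into $\rho^{\mathrm{GD}}_{\min}(\nu^{(\theta)})\ge 10^{-6}C/n$, and this is the real work. The plan is to combine the $b$‑marginal boundedness with the ``bounded‑oscillation'' structure coming from $\beta^{\pm\Delta}\le e^{6}$ (each single‑site conditional kernel depends on its neighborhood only through a factor in $[e^{-6},e^{6}]$) inside an entropy‑factorization argument of the type behind \Cref{lemma-imply-mls} and the uniform block factorization, now run in the easy magnetized regime: tensorizing over the $n$ sites yields the $1/n$, and comparing each single‑site update against a coin of bias $\ge b$ yields a factor of order $b\gtrsim C$, so with $\theta=\tfrac{1}{500}$ the product is at least $10^{-6}C/n$. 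The main obstacle is obtaining this factorization with a constant free of $1/\delta$: the generic spectral‑independence machinery would reintroduce $1/\delta$ through the near‑critical contraction rate, so one must instead argue directly — e.g.\ through a degree‑ and $\delta$‑independent spatial‑mixing or coupling estimate for the strongly magnetized model, or a block‑dynamics comparison — and then chase the explicit constants $\tfrac{1}{500}$ and $10^{-6}$, which is routine once the degree‑ and $\delta$‑free estimate is in place.
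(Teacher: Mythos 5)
Your part~(1) is essentially the paper's argument: the paper invokes a known $\infty$-norm spectral independence bound for the Ising model in the uniqueness window (\Cref{thm:CLV-spectral}, which packages exactly the SAW-tree/potential-function analysis you describe), checks the one-line condition $\abs{\tfrac{(1-\beta^2)e^y}{(\beta e^y+1)(\beta+e^y)}}\le\tfrac{\abs{1-\beta}}{1+\beta}\le\tfrac{1-\delta}{\Delta-1}$, and transfers the bound to $\nu$ via \Cref{fact:flip}. Citing the lemma rather than re-deriving the recursion is the same move.

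Part~(2) has a genuine gap, which you in fact flag yourself. Your setup is correct — unwinding the flip, noting that the fields of $\nu^{(1/500)}$ all satisfy $\min(\lambda^\star_v,1/\lambda^\star_v)\in[\tfrac{C}{500},\tfrac{1}{500}]$, and using $\beta^{\pm\Delta}\le 27$ to get a degree-free marginal lower bound $\alpha\gtrsim C$ — but the step you leave open, converting this into an MLS constant $\Omega(C/n)$ free of $\Delta$ and $\delta$, is the entire content of the claim, and marginal boundedness alone cannot deliver it (near-critical models are marginally bounded yet have tiny MLS constants). Your proposed routes (tensorization against a biased coin, a block-dynamics comparison, an unspecified coupling estimate) do not identify the actual mechanism. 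The paper's resolution is \Cref{thm:MLS-biased}: under Dobrushin's condition $\norm{A}_2<1$, Glauber dynamics has $\rho^{\mathrm{GD}}\ge\alpha(1-\norm{A}_2)^2/(2n)$. The whole point of flipping and then magnetizing by $\theta=\tfrac{1}{500}$ is to make every effective field small enough that each row sum of the Dobrushin matrix is at most $\Delta\abs{1-\beta^2}\,\beta^{\pm\Delta}\min(\lambda^\star_v,1/\lambda^\star_v)\le 300\cdot\tfrac{1}{500}<\tfrac{3}{5}$, uniformly in $\Delta$ and $\delta$: the factor $\Delta$ from summing over neighbors is cancelled by the smallness of the field (together with $\Delta\abs{1-\beta}=O(1)$ in the uniqueness window), not by any spatial-mixing or entropy-factorization argument. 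Hence $\norm{A}_2\le\sqrt{\norm{A}_1\norm{A}_\infty}\le\tfrac{3}{5}$, and combining with $\alpha\ge\tfrac{C}{2\times10^4}$ gives the stated $10^{-6}C/n$. A further point you elide: the claim concerns $\rho^{\mathrm{GD}}_{\min}$, so the estimate must hold for every pinning $\tau$; the paper handles this by the identity $\rho^{\mathrm{GD}}(\pi^\tau)=\tfrac{m}{n}\rho^{\mathrm{GD}}(\pi^\tau_\Lambda)$ for the dynamics restricted to the $m$ free coordinates, observing that the Dobrushin and marginal bounds are uniform over pinnings.
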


We are ready to prove \Cref{thm:main}.
\begin{proof}[Proof of \Cref{thm:main}]
Fix an Ising model $\+I = (G,\beta, \*\lambda)$ with Gibbs distribution $\mu$. 
Consider the Glauber dynamics on $\mu$.
Note that if $\min\tp{\lambda_v,\frac{1}{\lambda_v}} \leq \frac{1}{500}$ for all $v \in V$, then the classic path coupling~\cite{bubley1997path} yields an $O(n \log n)$ mixing time.
Therefore, we may assume that there exists a vertex $v^\star \in V$ satisfying $\min \tp{\lambda_{v^\star},\frac{1}{\lambda_{v^\star}}} \ge \frac{1}{500}$, which implies
$\min \tp{\lambda_v,\frac{1}{\lambda_v}} \ge \frac{\lambda_{\min}}{500 \lambda_{\max}}$ for all $v \in V$ and $\lambda_{\min} \leq 500$, $\lambda_{\max} \geq \frac{1}{500}$.

Let $\nu = \mathrm{flip}(\mu,\*\chi_{\+I})$, where $\*\chi_{\+I}$ is defined in~\eqref{eq-def-chi}.
The first part of \Cref{fact:flip} shows that $\nu$ is $\frac{2}{\delta}$-spectrally independent with all fields. 
The second part shows that 
$\rho^{\mathrm{GD}}_{\min}( \nu^{\tp{\frac{1}{500}}}) \ge \frac{10^{-6}C}{n}$ where $C = \frac{\lambda_{\min}}{500 \lambda_{\max}}$.
Using \Cref{theorem-general} with $\theta = \frac{1}{500}$, the Glauber dynamics on $\nu$ has  MLS constant
 \begin{align*}
  \rho^{\mathrm{GD}}(\nu) \ge \tp{\frac{10^{-\frac{12}{\delta}-30} \lambda_{\min}}{\lambda_{\max}}} \frac{1}{n}.
\end{align*}

Furthermore, since $\lambda_{\min} \leq 500$ and $\lambda_{\max} \geq \frac{1}{500}$, we have
 \begin{align*}
  \nu_{\min} = \mu_{\min} \geq \tp{\min_{v \in V} \min_{0 \le s \le \Delta} \min\tp{\frac{\lambda_v \beta^{s}}{\lambda_v \beta^{s} + \beta^{\deg(v)-s}},\frac{\beta^{\deg(v)-s}}{\lambda_v \beta^s + \beta^{\deg(v)-s}}}}^n \ge \tp{\frac{\lambda_{\min}}{14000 \lambda_{\max}}}^n.
 \end{align*}
%
%
%
%
%

 By \Cref{fact:flip}, $\rho^{\mathrm{GD}}(\nu) = \rho^{\mathrm{GD}}(\mu)$.
 Note that the transition matrix of Glauber dynamics has non-negative eigenvalues~\cite{DGU14,alev2020improved}.
By~\eqref{eq:MLS-mixing}, the mixing time of the Glauber dynamics can be bounded by
 \begin{align*}
T_{\mathrm{mix}}(\epsilon) &\leq  \frac{1}{\rho^{\mathrm{GD}}(\mu)} \tp{\log \log \frac{1}{\mu_{\mathrm{min}}} + \log \frac{1}{2 \epsilon^2}}	\leq \frac{\lambda_{\max}}{\lambda_{\min}} 10^{30 + \frac{12}{\delta}} n \tp{\log n + \log \log \frac{14000\lambda_{\max}}{\lambda_{\min}} + \log \frac{1}{2\epsilon^2}}\\
&=\exp\tp{O(1/\delta)} \frac{\lambda_{\max}}{\lambda_{\min}} n \tp{\log \frac{n}{\epsilon} + \log \frac{2\lambda_{\max}}{\lambda_{\min}}}.
 \end{align*}
This proves the theorem.
\end{proof}


We need following two lemmas to prove \Cref{thm:verification}.
\begin{lemma}[\text{\cite{chen2020optimal}}]\label{thm:CLV-spectral}
  Let $\mu$ be the Gibbs distribution of Ising model on graph $G=(V,E)$ with maximum degree $\Delta \ge 3$, edge activity $\beta$ and local fields $\*\lambda = (\lambda_v)_{v \in V}$. If there exists $\alpha \in (0,1)$ such that
  \begin{align}\label{eq:one-potential}
    h(y) \triangleq \abs{\frac{(1-\beta^2) e^y}{(\beta e^y+1)(\beta+e^y)}} \le \frac{1-\alpha}{\Delta-1},  
  \end{align}
  then it holds that  $\Vert \Psi^{\mathrm{inf}}_\mu \Vert_\infty \leq \frac{2}{\alpha}$.
\end{lemma}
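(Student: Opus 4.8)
The plan is to bound $\norm{\Ima_\mu}_\infty=\max_{u\in V}\sum_{v\in V}\abs{\Ima_\mu(u,v)}$ by reducing the total influence of a fixed vertex $u$ to the computation (self-avoiding-walk) tree of Weitz, and then running a one-step contraction (telescoping) argument on that tree.

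Fix $u$ and let $T=T_{\mathrm{SAW}}(G,u)$ be the self-avoiding-walk tree rooted at a copy $\hat u$ of $u$, carrying the Ising weights inherited from $\+I=(G,\beta,\*\lambda)$ (with the repeated boundary vertices pinned as usual); the root has degree at most $\Delta$ and every other vertex at most $\Delta-1$ children. By the standard correspondence for computation trees, for every $v$ one has $\abs{\Ima_\mu(u,v)}\le\sum_{\hat w}\abs{\Ima_{\mu_T}(\hat u,\hat w)}$, the sum ranging over the copies $\hat w$ of $v$ in $T$; summing over $v$, it therefore suffices to prove $\sum_{\hat w\in T,\ \hat w\ne\hat u}\abs{\Ima_{\mu_T}(\hat u,\hat w)}\le 2/\alpha$ for every such rooted tree.

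On $T$ I would use the marginal-ratio recursion. Rerooting at any vertex $w$, its marginal is $\mu_{T,w}(\1)=\hat t_w/(1+\hat t_w)$, where $\log\hat t_w=\log\lambda_w+\sum_{w'\sim w}f_\beta\bigl(\log\hat t^{(-w)}_{w'}\bigr)$, $\hat t^{(-w)}_{w'}$ being the ratio at the neighbour $w'$ in the branch pointing away from $w$, and $f_\beta(y)\triangleq\log\frac{\beta e^y+1}{e^y+\beta}$ the message map. The one hand computation is $f_\beta'(y)=\frac{(\beta^2-1)e^y}{(\beta e^y+1)(e^y+\beta)}$, so that $\abs{f_\beta'(y)}=h(y)\le\frac{1-\alpha}{\Delta-1}$ for all $y$ by hypothesis~\eqref{eq:one-potential}. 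Now fix $\hat w$ at depth $\ell$ in $T$, with tree path $\hat u=w_0,w_1,\dots,w_\ell=\hat w$. Pinning $\hat u$ to $\1$ or $\0$ changes, for each $w_i$, only the message coming from the branch towards $\hat u$; so the effect on $\mu_{T,\hat w}(\1)$ propagates through exactly the composition of the root message $m_{\hat u\to w_1}\in\{\beta,\beta^{-1}\}$, one application of $f_\beta$ at each of $w_1,\dots,w_{\ell-1}$, then the sigmoid $t\mapsto t/(1+t)$ at $\hat w$, interleaved with multiplications by positive constants that do not move ratios. The base case $\ell=1$ is the exact identity $\abs{\Ima_{\mu_T}(\hat u,w)}=h\bigl(\log\hat t^{(-\hat u)}_w\bigr)$; telescoping the composition for $\ell\ge2$ gives $\abs{\Ima_{\mu_T}(\hat u,\hat w)}\le\prod_{i=1}^{\ell}h(\xi_i)\le\bigl(\tfrac{1-\alpha}{\Delta-1}\bigr)^{\ell}$ for suitable ratios $\xi_i$ along the path. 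Since $T$ has at most $\Delta(\Delta-1)^{\ell-1}$ vertices at depth $\ell\ge1$, summing by depth yields
\begin{align*}
\sum_{\hat w\ne\hat u}\abs{\Ima_{\mu_T}(\hat u,\hat w)}\ \le\ \sum_{\ell\ge1}\Delta(\Delta-1)^{\ell-1}\Bigl(\frac{1-\alpha}{\Delta-1}\Bigr)^{\ell}\ =\ \frac{\Delta}{\Delta-1}\cdot\frac{1-\alpha}{\alpha}\ \le\ \frac{2}{\alpha},
\end{align*}
using $\Delta\ge 3$ and $1-\alpha<1$, which is the claim.

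I expect the main obstacle to be the telescoping bound $\abs{\Ima_{\mu_T}(\hat u,\hat w)}\le\prod_{i=1}^{\ell}h(\xi_i)$. Handling the root naively (chain rule together with the $\tfrac14$ bound on the sigmoid derivative) loses an extra factor of order $\abs{\log\beta}$ produced by the unbounded field on the pinned root, which is too lossy to reach the constant $2$; the clean argument must instead keep the exact M\"obius/sigmoid structure, pairing the root's $\{\beta,\beta^{-1}\}$ jump with the first $f_\beta$-step (so it becomes a genuine $h(\cdot)$ factor, as the $\ell=1$ computation already shows) and likewise pairing the terminal sigmoid with the last $f_\beta$-step; this is the bookkeeping carried out in~\cite{chen2020optimal}. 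A secondary point is the tree reduction itself: pinning $v$ in $G$ pins all copies of $v$ in $T$ simultaneously, so dominating $\abs{\Ima_\mu(u,v)}$ by the sum of the per-copy tree influences needs the triangle inequality in combination with the monotone (ferromagnetic) resp.\ sign-alternating (antiferromagnetic) structure of Ising influences.
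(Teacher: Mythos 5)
Your argument is correct and is essentially the standard one: the paper itself does not prove this lemma but imports it from \cite{chen2020optimal}, where the proof is exactly the self-avoiding-walk-tree reduction plus the path-factorization of tree influences, with each parent-to-child factor equal to $h$ evaluated at the child's subtree log-ratio and hence bounded by $\frac{1-\alpha}{\Delta-1}$, followed by the same depth-by-depth geometric sum giving $\frac{\Delta}{\Delta-1}\cdot\frac{1-\alpha}{\alpha}\le\frac{2}{\alpha}$. Your two flagged concerns are the right ones but both resolve cleanly: the one-step influence is exactly $\pm h(\cdot)$ (so no lossy sigmoid/root bookkeeping is needed), and the $G$-to-tree step is the exact signed identity $\Ima_\mu(u,v)=\sum_{\hat w}\Ima_{\mu_T}(\hat u,\hat w)$ over copies of $v$, after which the triangle inequality alone suffices, with no appeal to monotonicity.
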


\begin{lemma}[\text{\cite{Katalin19,HA20}}]\label{thm:MLS-biased}
  Let $\mu$ be a distribution with support $\{\0,\1\}^V$ and $P^{\mathrm{GD}}_{\mu}$ be the Glauber dynamics on $\mu$. 
  Let $\alpha = \min_{v \in V} \min_{\sigma \in \{\0,\1\}^V} \mu_v^{\sigma_{V \setminus \{v\}}}(\sigma_v)$ denote the marginal lower bound. 
  Furthermore, denote the Dobrushin's influence matrix of $\mu$ by $A=(A_{u,v})_{u,v\in V}$, which is defined as
\begin{align*}
A_{u,v}= \begin{cases}
\max_{(\sigma,\tau) \in S(u,v)}\DTV{\mu^\sigma_v}{\mu^\tau_v} &\text{if } u \neq v\\
0 &\text{if } u = v,
\end{cases}
\end{align*}
where $S(u,v)$ contains all pairs of configurations $\sigma,\tau \in \{\0,\1\}^{V \setminus \{v\} }$ that disagree only at vertex $u$.
Let $|V| = n$.
  If $\norm{A}_2 <1$, then the Glauber dynamics on $\mu$ has modified log-Sobolev constant
  \begin{align*}
  \rho^{\mathrm{GD}}(\mu) \geq \frac{\alpha(1-\norm{A}_2)^2}{2n}.	
  \end{align*}
\end{lemma}

We are now ready to present the proof of \Cref{thm:verification}
\begin{proof}[Proof of \Cref{thm:verification}]
  For any $\*\lambda \in \mathbb{R}_{>0}^V$, note that when $\beta \in \left[\frac{\Delta-2+\delta}{\Delta-\delta},\frac{\Delta-\delta}{\Delta-2+\delta}\right]$,
  \begin{align*}
    \abs{\frac{(1-\beta^2)e^y}{(\beta e^y+1)(\beta +e^y)}} \le \frac{\abs{1-\beta}}{1+\beta} \le \frac{1-\delta}{\Delta-1}. 
  \end{align*}
  Together with \Cref{thm:CLV-spectral} and \Cref{fact:flip}, we have 
$
\sup_{\*\phi \in \mathbb{R}_{>0}^V}\Vert \Psi^{\mathrm{inf}}_{\nu^{(\*\phi)}} \Vert_\infty = \sup_{\*\phi \in \mathbb{R}_{>0}^V} \Vert \Psi^{\mathrm{inf}}_{\mu^{(\*\phi)}} \Vert_\infty \leq \frac{2}{\delta}.
$

Let $\*\chi = \*\chi_{\+I}$. Recall that $\nu^{\tp{\frac{1}{500}}} = \mathrm{flip}\tp{\mu^{\tp{\frac{1}{500}}^{\*\chi}},\*\chi}$ and $\rho^{\mathrm{GD}}_{\min}\tp{\nu^{\tp{\frac{1}{500}}}} = \rho^{\mathrm{GD}}_{\min} \tp{\mu^{\tp{\frac{1}{500}}^{\*\chi}}}$.
Let $\pi \triangleq \mu^{\tp{\frac{1}{500}}^{\*\chi}}$.
Note that $\pi$ is the Gibbs distribution of the Ising model with external field $\*\lambda^\star \in \mathbb{R}_{>0}^V$, where 
\begin{align*}
\lambda^\star_v =
\begin{cases}
500\lambda_v & \lambda_v \geq 1;\\
\frac{\lambda_v}{500}, & \lambda_v < 1.
\end{cases}
\end{align*}
It suffices to prove that 
\begin{align*}
	\rho^{\mathrm{GD}}_{\min}\tp{\pi} \ge \frac{10^{-6} C}{n}.
\end{align*}

Fix a subset $\Lambda \subseteq V$ of size $m = |V|$.
Fix a partial configuration $\tau \in \{\0,\1\}^{V \setminus \Lambda}$.
We analyze the MLS constant for the Glauber dynamics $P^{\mathrm{GD}}_{\pi^\tau}$ on $\pi^\tau$.
Note that $\pi^\tau$ is a distribution over $\{\0,\1\}^V$, where the configuration on $V \setminus \Lambda$ is fixed to $\tau$.
We consider the distribution $\pi^\tau_{\Lambda}$ and the Glauber dynamics $P^{\mathrm{GD}}_{\pi^\tau_\Lambda}$.
The following relation of MLS constants is straightforward to verify
\begin{align}
\label{eq-relation-condition-original}
\rho^{\mathrm{GD}}(\pi^\tau) = \frac{m}{n}\rho^{\mathrm{GD}}(\pi^\tau_\Lambda).	
\end{align}

We use \Cref{thm:MLS-biased} to analyze the MLS constant for Glauber dynamics on $\pi^\tau_\Lambda$.
Note that $\frac{C}{500} \le \min\tp{\lambda^\star_v,\frac{1}{\lambda^\star_v}} \le \frac{1}{500}$ holds for all $v \in V$. Let $\alpha$ be the marginal lower bound for $\pi^\tau_\Lambda$. It holds that 
  \begin{align*}
    \alpha &\geq \min_{v \in V} \min_{\sigma \in \{\0,\1\}^V} \pi_v^{\sigma_{V \setminus \{v\}}}(\sigma_v)\\
    &\ge \min_{v \in V} \min_{0 \le s \le d\le \Delta} \min \tp{\frac{\lambda^\star_v \beta^s}{\lambda^\star_v \beta^s + \beta^{d-s}}, \frac{\beta^{d-s}}{\lambda^\star_v \beta^s + \beta^{d-s}}}\\
    &\overset{(*)}{\ge} \min \tp{ \frac{\lambda^\star_v}{\lambda^\star_v+27}, \frac{1}{27\lambda^\star_v+1}}\\
    & \ge \frac{C}{2 \times 10^{4}}, 
  \end{align*}
  where $(*)$ follows from $\max_{-\Delta \le s \le \Delta} \beta^{s} \le \tp{1+\frac{2}{\Delta-2}}^\Delta \le 27$ when $\Delta \ge 3$. 
  Moreover, let $A$ denote the  Dobrushin's influence matrix for $\pi^\tau_\Lambda$. It holds that
  \begin{align*}
    \max \tp{\norm{A}_1,\norm{A}_{\infty}} &\le \Delta \sup_{\substack{\lambda \in \mathbb{R}_{>0}\\ \min(\lambda, \frac{1}{\lambda}) \le \frac{1}{500} }}\max_{0 \le s < d \le \Delta} \abs{\frac{\lambda \beta^s}{\lambda \beta^s + \beta^{d-s}} - \frac{\lambda \beta^{s+1}}{\lambda \beta^{s+1}+\beta^{d-s-1}}}\\
    &= \Delta \sup_{\substack{\lambda \in \mathbb{R}_{>0}\\ \min(\lambda, \frac{1}{\lambda}) \le \frac{1}{500} }} \max_{0 \le s < d \le \Delta} \abs{\frac{\lambda \beta^{d-1}(1-\beta^2)}{\lambda \beta^{d-1}(\beta^2+1)+\lambda^2 \beta^{2s+1} + \beta^{2d-2s-1}}}\\
    &\le \sup_{\substack{\lambda \in \mathbb{R}_{>0}\\ \min(\lambda, \frac{1}{\lambda}) \le \frac{1}{500} }} \max_{0 \le s < d \le \Delta} \min\tp{\frac{\lambda \Delta \abs{1-\beta^2}}{\beta^{d-2s}}, \frac{\Delta \abs{1-\beta^2}}{\lambda \beta^{2s-d+2}}}\\
    &\overset{(*)}{\le} \frac{3}{5},
  \end{align*}
  where $(*)$ follows from the fact that, for any $\lambda \in \mathbb{R}_{>0}$ satisfying $\min \tp{\lambda,\frac{1}{\lambda}} \le \frac{1}{500}$,
  \begin{align*}
    \max_{0 \le s < d \le \Delta}  \min\tp{\frac{\lambda \Delta \abs{1-\beta^2}}{\beta^{d-2s}}, \frac{\Delta \abs{1-\beta^2}}{\lambda \beta^{2s-d+2}}}
    \le 
    \begin{cases} 
      \min\tp{\lambda,\frac{1}{\lambda}} \Delta \tp{1-\beta^2} \beta^{-\Delta} \le 300  \min\tp{\lambda,\frac{1}{\lambda}} < \frac{3}{5}&, \beta \in \left(\frac{\Delta-2}{\Delta},1\right]\\
      \min\tp{\lambda,\frac{1}{\lambda}} \Delta \tp{\beta^2-1} \beta^{\Delta-2} \le 300  \min\tp{\lambda,\frac{1}{\lambda}} < \frac{3}{5} &, \beta \in \left[1,\frac{\Delta}{\Delta-2}\right).
    \end{cases}
  \end{align*}
  Hence, $\norm{A}_2 \le \sqrt{\norm{A}_1 \norm{A}_{\infty}} \le \frac{3}{5}$.
  Combining~\Cref{thm:MLS-biased} and~\eqref{eq-relation-condition-original}, we have
  \begin{align*}
  \rho^{\mathrm{GD}}(\pi^\tau) = \frac{m}{n}\rho^{\mathrm{GD}}(\pi^\tau_\Lambda) \geq 	\frac{m}{n} \cdot \frac{C\times (2/5)^2}{2\times 10^4 \times 2 m} \geq \frac{C}{10^6n}.
  \end{align*}
The above inequality holds for any $\tau$, which implies $\rho^{\mathrm{GD}}_{\min}\tp{\nu^{\tp{\frac{1}{500}}}} = \rho^{\mathrm{GD}}_{\min}\tp{\pi} \ge \frac{10^{-6} C}{n}$.
\end{proof}

\section{Acknowledgements}
Weiming Feng is supported by funding from the European Research Council (ERC) under the European Union's Horizon 2020 research and innovation programme (grant agreement No.~947778).

We would like to thank Heng Guo for helpful discussions.

\bibliographystyle{alpha}
\bibliography{refs.bib}

\end{document}